\theoremstyle{plain}
\newtheorem{theorem}{Theorem}[section]
\newtheorem{corollary}[theorem]{Corollary}
\newtheorem{lemma}[theorem]{Lemma}
\newtheorem{definition}[theorem]{Definition}
\theoremstyle{definition}
\newtheorem{notation}[theorem]{Notation}
\newcommand{\R}{\mathcal{R}}
\newcommand{\Z}{\mathbb{Z}}
\newcommand{\N}{\mathbb{N}}
\newcommand{\gw}{\mathbf{\mathsf{GW}}}
\newcommand{\rgw}{\mathbf{\mathsf{RGW}}}
\newcommand{\rc}{\mathbf{\mathsf{R}}}
\renewcommand{\d}{\mathrm{d}}
\newcommand{\E}{\mathbb{E}}
\newcommand{\Pb}{\mathbb{P}}
\newcommand{\T}{\mathcal{T}}
\newcommand{\TT}{\mathbb{T}}
\newcommand{\pref}[1]{\hyperref[#1]{[p. \pageref{#1}]}}
\newcommand{\old}[1]{}
\title{Range and speed of rotor walks on trees}
\author{Wilfried Huss and Ecaterina Sava-Huss}
\begin{document}
\maketitle

\begin{abstract}
We prove a law of large numbers for the range of rotor walks with random initial configuration on regular trees and on Galton-Watson trees. We also show the existence of the \emph{speed} for such rotor walks. More precisely, we show that on the classes of trees under consideration, even in the case when the rotor walk is recurrent, the range grows at linear speed.
\end{abstract}


\textit{Key words and phrases.} 
rotor walk, range, rate of escape, Galton-Watson tree, generating function, law of large numbers, recurrence, transience, contour function.

\begin{small}
\tableofcontents
\end{small}

\section{Introduction}\label{sec:intro}

For $d\geq 2$ let $\TT_d$ be the rooted regular tree of degree $d+1$, and denote by $r$ the root. We attach an additional \emph{ sink vertex} $o$ to the root $r$.  We use the notation $\widetilde{\TT}_d = \TT_d\setminus\{o\}$ to denote the tree without the sink vertex. For each vertex
$v\in \widetilde{\TT}_d$ we denote its neighbors by $v^{(0)}, v^{(1)},\ldots,v^{(d)}$,
where $v^{(0)}$ is the parent of $v$ and the other $d$ neighbors, the children of $v$, are ordered counterclockwise.

\begin{figure}[h]
\centering
\includegraphics[width = 0.5\linewidth]{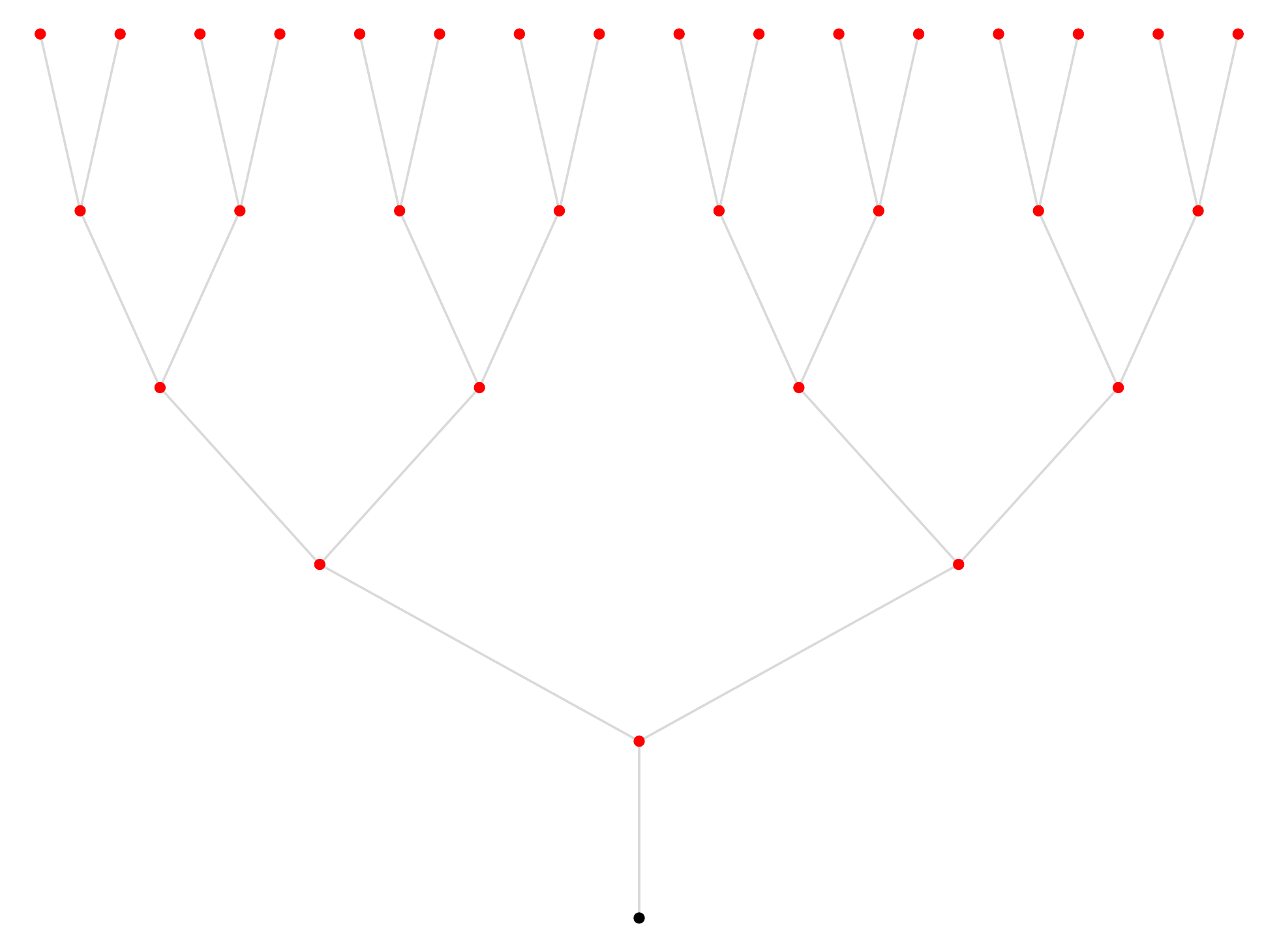}
\caption{The binary tree $\TT_2$.}
\end{figure}

Each vertex $v\in \widetilde{\TT}_d$ is endowed with a rotor $\rho(v) \in \{0,\ldots,d\}$, where $\rho(v)=j$, for $j\in\{0,\ldots,d\}$ means that the rotor in $v$ points to neighbor $v^{(j)}$. 
Let $(X_n)_{n\in\mathbb{N}}$ be a rotor walk on $\TT_d$ starting in $r$ with initial rotor configuration $\rho=(\rho(v))_{v\in \widetilde{\TT}_d}$: for all  $v \in \widetilde{\TT}_d$, let $\rho(v)\in \{0,\dots,d\}$ be independent and identically distributed random variables, with distribution given by $\Pb[\rho(v) = j] = r_j$ with $\sum_{j=0}^dr_j=1$. The rotor walk moves in this way:
at time $n$, if the walker is at vertex $v$, then it first 
rotates the rotor to point to the next neighbor in the counterclockwise order and then it moves to that vertex, that is $X_{n+1}=v^{(\rho(v)+1)\mod (d+1)}$. If the initial rotor configuration is random, then once a vertex has been visited for the first time, the configuration there is fixed.
A child $v^{(j)}$ of a vertex $v\in \TT_d$ is called \emph{good} if $\rho(v)<j$, which means that the rotor walk will first visit the good children before visiting the parent $v^{(0)}$ of $v$. Remark that $v$ has $d-\rho(v)$ good children. The 
\emph{tree of good children} for the rotor walk $(X_n)$, which we denote $\T_d^{\mathsf{good}}$, is a subtree of $\TT_d$, where all the vertices are good children. Let us
denote by $R_n = \{X_0, \ldots, X_n\}$ the range on $\widetilde{\TT}_d = \TT_d\setminus\{o\}$ of the rotor walk $(X_n)$ up to time $n$, that is, the set of distinct visited points by the rotor walk $(X_n)$ up to time $n$, excluding the sink vertex $o$. 
Its cardinality, denoted by $|R_n|$ represents then the number of distinct visited points by the walker up to time $n$. 

We denote by $d(r,X_n):=|X_n|$ the distance from the position $X_n$ at time $n$ of the rotor walker to the root $r$. 
The \emph{speed} or the \emph{rate of escape} of the rotor walk $(X_n)$ is the almost sure limit (if it exists) of $\frac{|X_n|}{n}$.  We say that $|R_n|$ satisfies a law of large numbers if 
$\frac{|R_n|}{n}$ converges almost surely to a constant. The aim of this work is to prove a law of large numbers for $|X_n|$ and $|R_n|$, that is, to find constants $l$ and $\alpha$ such that

\begin{align*}
\lim_{n\to \infty}\frac{|X_n|}{n} & =l,\quad \text{almost surely},\\
\lim_{n\to \infty}\frac{|R_n|}{n}& =\alpha,\quad \text{almost surely},
\end{align*}
when $(X_n)$ is a rotor walk with random initial rotor configuration on a regular tree and on a Galton-Watson tree, respectively. On regular trees, these constants depend on whether the rotor walk $(X_n)$ is recurrent or transient on $\TT_d$, a property which depends only on the expected value $\E[\rho(v)]$ of the rotor configuration at vertex $v$, as shown in \cite[Theorem 6]{angel_holroyd}: if $\E[\rho(v)]\geq d-1$, then the rotor walk $(X_n)$ is recurrent, and if 
$\E[\rho(v)]< d-1$ then it is transient. Since in the case $\E[\rho(v)]=d-1$, the expected return time to the root is infinite, we shall call this case a \emph{critical case}, and we say that the rotor walk is \emph{null recurrent}. Otherwise, if $\E[\rho(v)]>d-1$, we say that $(X_n)$ is \emph{positive recurrent}. The tree $\T^{\mathsf{good}}_d$ of good children for the rotor walk is a Galton-Watson tree with mean offspring number $d-\E[\rho(v)]$ and generating function $f(s)=\sum_{j=0}^{d}r_{d-j}s^j$. 

The main results of this paper can be summarized into the two following theorems.

\begin{theorem}[Range of the rotor walk]
\label{main-thm-range}
If $(X_n)_{n\in\N}$ is a rotor walk with random initial configuration of rotors on $\TT_d$, $d\geq 2$, then there exists a constant $\alpha>0$, such that
\begin{equation*}
\lim_{n\to \infty}\frac{|R_n|}{n} =\alpha,\quad \text{almost surely}. 
\end{equation*}
The constant $\alpha$ depends only on $d$ and on the distribution of $\rho$ and is given by:
\begin{enumerate}[(i)]
 \setlength\itemsep{-1em}
\item If $(X_n)_{n\in\N}$ is positive recurrent, then
\begin{equation*}
\alpha=\frac{d-1}{2\mathbb{E}[\rho(v)]}.
\end{equation*}
\item If $(X_n)_{n\in\N}$ is null recurrent, then 
\begin{equation*}
\alpha=\frac{1}{2}.
\end{equation*}
\item If $(X_n)_{n\in\N}$ is transient, then conditioned on the non-extinction of $\T^{\mathsf{good}}_d$,
\begin{equation*}
\alpha=\frac{q-f'(q)(q^2-q+1)}{q^2+q-f'(q)(2q^2-q+1)} 
\end{equation*}
where $q>0$ is the extinction probability of $\T^{\mathsf{good}}_d$.
\end{enumerate}
\end{theorem}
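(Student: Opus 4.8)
The plan is to express both $|R_n|$ and $n$ through one combinatorial statistic of the frozen rotor configuration: the number $E(v)$ of excursions the walk makes into $v$ from its parent. The rotor rule yields a fundamental recursion --- a fresh excursion reaching $v$ is forwarded to every good child but absorbed across each \emph{bad} edge (an edge from $v$ to a child $v^{(j)}$ with $j\le\rho(v)$) --- so that $E(v^{(j)})=E(v)-\indicator[\,j\le\rho(v)\,]$ and, iterating from the root, $E(v)=E(r)-b(v)$, where $b(v)$ counts the bad edges on the geodesic from $r$ to $v$. Thus $v$ is visited exactly when $E(v)\ge1$. Counting the exits in the rotor cycle at a fully-explored vertex gives $u(v)=(d+1)E(v)-\rho(v)$, whence
\begin{equation*}
|R|=\#\{v:E(v)\ge1\},\qquad n=\sum_v u(v)=(d+1)\sum_v E(v)-\sum_v\rho(v).
\end{equation*}
I would establish these identities first; evaluated at a time when the walk sits at the sink they are exact (no excursion is then in progress), and general $n$ is recovered afterwards by the monotonicity of $R_n$.

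In the recurrent regimes I would stop the walk at its $T$-th visit to the sink, so $E(r)=T+1$ and $|R_T|=\#\{v:b(v)\le T\}$. Everything then reduces to the two limits $c_1:=\lim_T\big(\sum_v E(v)\big)/|R_T|$ and $\lim_T\big(\sum_v\rho(v)\big)/|R_T|=\E[\rho]$ (the latter because whether $v$ is visited depends on its ancestors' rotors and its index, not on $\rho(v)$ itself), giving $\alpha=1/\big((d+1)c_1-\E[\rho]\big)$. The statistic $b$ has a transparent branching structure: the expected number of depth-$k$ vertices with exactly $j$ bad edges is $\binom{k}{j}\E[\rho]^{\,j}(d-\E[\rho])^{\,k-j}$, so with $g=d-\E[\rho]$ one gets $\E[\#\{b(v)=j\}]=\E[\rho]^{\,j}(1-g)^{-(j+1)}$. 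In the positive recurrent case $g<1$, these geometric sums are dominated by $j\approx T$ and yield $c_1=\E[\rho]/(d-1)$, hence $\alpha=\frac{d-1}{2\,\E[\rho]}$. Passing from these expectations to almost-sure limits is a concentration step: the relevant sums run over disjoint subtrees carrying independent rotors, so a second-moment estimate with Borel--Cantelli along the sink-visit times, together with control of the range and time accrued within a single excursion, upgrades the convergence to all $n$.

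For the transient case I would condition on non-extinction of $\T^{\mathsf{good}}_d$; now the walk never reaches the sink, so $E(r)=1$ and only the good subtree is ever entered. The walk descends the leftmost infinite ray (spine) of $\T^{\mathsf{good}}_d$, fully exploring each finite good-subtree hanging to the left of the spine before stepping down one level, and it never backtracks past a spine vertex. Consecutive spine vertices are therefore regeneration epochs with i.i.d.\ time- and range-increments, and a renewal--reward argument gives $\alpha=\E[\Delta|R|]/\E[\Delta n]$. These increments come from the spine decomposition of the Galton--Watson tree conditioned to survive: the side-trees are distributed as $\T^{\mathsf{good}}_d$ conditioned on extinction, which is subcritical with offspring mean $f'(q)$ and expected size $1/(1-f'(q))$, while the offspring law and the position of the spine child are governed by $q$. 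Computing the two expectations and simplifying produces the stated rational function of $q$ and $f'(q)$.

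The main obstacle is the null recurrent (critical) case $\E[\rho]=d-1$, where $g=1$: the branching computation collapses because $\E[\#\{b(v)=j\}]=\infty$ for every $j$, the good tree is critical, excursions have infinite mean length, and the renewal/expectation machinery is unavailable. Here I would argue pathwise, showing directly that $c_1=1$, i.e.\ that $\sum_v E(v)=(T+1)|R_T|-\sum_v b(v)$ is asymptotic to $|R_T|$ (equivalently, that $b(v)$ concentrates near its maximal value $T$ over the range). This is exactly where the contour-function description enters, the control being extracted from the convergence of the contour of the critical Galton--Watson good-tree; the output $n\sim(d+1-\E[\rho])|R_n|=2|R_n|$ then gives $\alpha=\tfrac12$, the continuous limit of the positive recurrent formula. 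Making the error terms genuinely negligible against the main term in this critical window is the delicate part of the whole argument.
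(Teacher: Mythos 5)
Your bookkeeping via the excursion counts $E(v)=E(r)-b(v)$ and the identities $|R|=\#\{v:E(v)\ge 1\}$, $n=(d+1)\sum_v E(v)-\sum_v\rho(v)$ is correct and is a genuinely different (and rather clean) parametrization from the paper's, which instead works with the boundary process $L_k=|\partial_o R_{\tau_k}|$ and the identity $\tau_k-\tau_{k-1}=2|R_{\tau_k}|$. Along return times in the positive recurrent case the two are equivalent: your $\lambda=\E[\rho]/(\E[\rho]-(d-1))$ is exactly the paper's offspring mean $\nu$ of the supercritical leaf process, and your $c_1=\nu/(\nu-1)$ computation reproduces $\alpha=\frac{d-1}{2\E[\rho]}$. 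Your observation that $\rho(v)$ is independent of the event $\{v\in R_T\}$ is also correct. The transient case as you sketch it (regeneration at spine vertices, side trees distributed as the subcritical $h$-tree with mean $f'(q)$ and expected size $1/(1-f'(q))$, renewal--reward) is essentially the paper's argument, up to an immaterial left/right orientation flip. However, two steps you treat as routine are precisely where the real work lies, and as written they are gaps.

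First, in the positive recurrent case the passage from the subsequence $(\tau_k)$ to all $n$ is not a matter of ``control of the range and time accrued within a single excursion'': since $\tau_{k+1}/\tau_k\to\nu>1$, a single excursion occupies a positive fraction of all elapsed time, so monotonicity of $R_n$ only traps $|R_n|/n$ between quantities whose ratio is about $\nu^2$, and no interpolation along one excursion closes this. The paper needs a separate idea here: couple the walk with $d^{\,j}$ independent rotor walks rooted at the level-$j$ vertices, express the error through the Seneta--Heyde limits $W_1,\dots,W_{d^{\,j}}$, and let $j\to\infty$ so that $\max_i W_i/\sum_i W_i\to 0$. Your ``second-moment estimate with Borel--Cantelli'' also undersells the concentration step even along $(\tau_k)$; what is actually needed is the a.s.\ ratio convergence $L_{k+1}/L_k\to\nu$ for the supercritical boundary process (Seneta--Heyde/Kesten--Stigum), which is how the cancellation of the random limit $W$ in numerator and denominator is obtained. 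Second, in the null recurrent case you correctly reduce the claim to $c_1=1$ (equivalently $\tau_{k-1}/\tau_k\to 0$), but you offer no mechanism for it: the contour-function material is an appendix-level curiosity in the paper and plays no role in this proof. The argument that works is elementary but specific: the $k$-th excursion appends, at each of the $L_{k-1}$ leaves, an i.i.d.\ fresh critical excursion of length distributed as $\tau_1$ with $\E[\tau_1]=\infty$; the strong law for infinite-mean i.i.d.\ sums gives $L_{k-1}^{-1}\sum_i(\theta_{k-1}^i-\eta_{k-1}^i)\to\infty$, and a short bootstrap on $l=\liminf\tau_k/\tau_{k-1}$ (ruling out $l=1$ by the fact that re-traversing the old range already costs time $\tau_{k-1}$) forces $\tau_{k-1}/\tau_k\to 0$. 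Without these two ingredients the proposal does not yet prove the theorem.
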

Remark that, even in the recurrent case, the range of the rotor walk grows at linear speed, which is not the case for simple random walks on regular trees.
The methods of proving the above result are completely different for the transient and for the recurrent case, and the proofs will be done in separate sections.

\begin{theorem}[Speed of the rotor walk]
\label{main-thm-escape}
If $(X_n)_{n\in\N}$ is a rotor walk with random initial configuration of rotors on $\TT_d$, $d\geq 2$, then there exists a constant $l\geq 0$, such that
\begin{equation*}
\lim_{n\to \infty}\frac{|X_n|}{n} =l,\quad \text{almost surely}. 
\end{equation*}
\begin{enumerate}[(i)]
\item If $(X_n)_{n\in\N}$ is  recurrent, then $l=0$.
\item If $(X_n)_{n\in\N}$ is transient, then conditioned on the non-extinction of $\T^{\mathsf{good}}_d$,
\begin{equation*}
l=\frac{(q-f'(q))(1-q)}{q+q^2-f'(q)(2q^2-q+1)},
\end{equation*}
where $q>0$ is the extinction probability of $\T^{\mathsf{good}}_d$.
\end{enumerate}
\end{theorem}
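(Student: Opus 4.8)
The plan is to treat the recurrent and transient regimes separately and, in each, to reduce the statement to things already under control: the recurrence/transience dichotomy and, crucially, the range asymptotics of Theorem~\ref{main-thm-range}. In the transient case the key is a \emph{pathwise} identity linking $|X_n|$, $|R_n|$ and $n$, which lets me import the constant $\alpha$ directly. Condition on non-extinction of $\T^{\mathsf{good}}_d$. Since the walk visits good children before the parent, it performs a depth-first exploration of $\T^{\mathsf{good}}_d$: at a vertex $v$ it enters its good children in order, fully exploring each subtree whose own tree of good children is finite (such an excursion returns to $v$), until it reaches the first good child whose tree of good children is infinite, into which it descends and never returns. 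The committed vertices form the leftmost infinite ray (the backbone), along which the depth increases by exactly $1$ per step. Counting edge-crossings, every edge inside a completed finite subtree is crossed exactly twice (once down, once up), every backbone edge exactly once (downward only), and each discovered non-root vertex is entered from above exactly once; hence there are $|R_n|-1$ downward and $|R_n|-|X_n|-1$ upward crossings, up to the single subtree currently under exploration. This gives
\[
n \;=\; 2|R_n| - |X_n| + o(n)\qquad\text{almost surely},
\]
the error being controlled because the straddling finite subtree (and the depth inside it) is $o(n)$. Dividing by $n$ and using $|R_n|/n\to\alpha$ yields $l=2\alpha-1$, and since the two formulas share the denominator $q+q^2-f'(q)(2q^2-q+1)$, a short algebraic check confirms that $2\alpha-1$ equals the claimed $\tfrac{(q-f'(q))(1-q)}{q+q^2-f'(q)(2q^2-q+1)}$.

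To make this rigorous and to secure the almost sure limit I would set up the regeneration structure explicitly: conditioned on having at least one surviving good child, the backbone vertices are i.i.d.\ regeneration points, with the offspring count, the index $J$ of the first surviving child, and the finite subtrees hanging to its left all fresh and independent across depth. Writing $\Delta\mathrm{dist}=1$, $\Delta\mathrm{range}=1+\sum_{i<J}|T_{c_i}|$ and $\Delta\mathrm{time}=1+\sum_{i<J}2|T_{c_i}|=2\,\Delta\mathrm{range}-1$ for the increments between consecutive backbone vertices (each finite good-children subtree of size $s$ being traversed in exactly $2s$ steps while discovering $s$ vertices), the strong law for cumulative i.i.d.\ sums gives $|X_n|/n\to \E[\Delta\mathrm{dist}]/\E[\Delta\mathrm{time}]=1/(2\E[\Delta\mathrm{range}]-1)$, the same answer, and also re-derives $l=2\alpha-1$. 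The generating-function evaluation of $\E[\Delta\mathrm{range}]$ is exactly the computation already carried out for Theorem~\ref{main-thm-range}, so the transient speed needs no new analytic input beyond it.

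In the recurrent case $l=0$, and the content is that the walk cannot escape at linear speed. Recurrence means the root is visited infinitely often, so with $L_n=\max\{m\le n:X_m=r\}$ one has the deterministic bound that $|X_n|$ is at most the maximal depth attained during the excursion straddling time $n$. In the positive-recurrent case the successive root-to-root excursions have finite expected length, so by the renewal theorem the straddling excursion — hence its depth — is $o(n)$ almost surely. In the null-recurrent (critical) case the excursions have infinite-mean length, so the depth cannot be bounded by the excursion length; instead I would use that the explored structures are \emph{critical}, whose height is sublinear in their size (height $\asymp\sqrt{\text{size}}$), so that even the longest straddling excursion reaches depth $o(n)$. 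Either way $|X_n|/n\to 0$.

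I expect the main obstacle to be the recurrent — specifically the null-recurrent — case: controlling the depth reached inside an excursion when excursion lengths have infinite mean requires the critical-tree height estimate rather than a crude length bound, and is further complicated by the fact that successive excursions probe different children of the root as the rotors advance, so they are not identically distributed depth-first passes of a single critical tree. On the transient side the only delicate point is justifying the $o(n)$ error term, i.e.\ that the finite subtree currently under exploration is negligible; this follows from the i.i.d.\ regeneration structure together with the almost sure finiteness of each such subtree, exactly as in the proof of Theorem~\ref{main-thm-range}.
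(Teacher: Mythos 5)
Your transient argument is essentially the paper's: regeneration along the escape ray $\gamma$ (first good child in visiting order with infinite line of descent, which in the paper's orientation is the \emph{rightmost} live path), i.i.d.\ increments $\Delta\mathrm{time}=1+2\sum_{j}|\widetilde{T}(j)|$ and $\Delta\mathrm{dist}=1$, the strong law, and the same interpolation between regeneration times using that the increments are a.s.\ finite and i.i.d. The identity $n=2|R_n|-|X_n|+O(1)$ along the regeneration times and the resulting $l=2\alpha-1$ are consistent with the paper's formulas. That part is sound.

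The recurrent case contains a genuine error in the positive recurrent subcase. You claim that ``the successive root-to-root excursions have finite expected length, so by the renewal theorem the straddling excursion --- hence its depth --- is $o(n)$.'' The root-to-root excursions are neither i.i.d.\ nor of bounded length: by \eqref{eq:tau_k_increments} the $k$-th excursion has length $\tau_k-\tau_{k-1}=2|R_{\tau_k}|$, because the walk must re-traverse the \emph{entire} previously visited tree before reaching fresh vertices, and $|R_{\tau_k}|$ grows geometrically (the leaf process $L_k$ is a supercritical Galton--Watson process with mean $\nu>1$). Consequently $(\tau_{k+1}-\tau_k)/\tau_{k+1}\to 1-1/\nu>0$: the straddling excursion occupies a \emph{positive fraction} of $n$, not $o(n)$, and the renewal theorem does not apply. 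The correct mechanism, used in the paper, is that the excursion is a depth-first search of $R_{\tau_k}$, so $|X_n|\le D_{k+1}$ where $D_{k+1}$ is the maximal depth after $k+1$ returns; by \cite[Theorem 7(ii)]{angel_holroyd} this grows only \emph{linearly} in $k$, while $n\ge\tau_k$ grows geometrically, whence $|X_n|/n\to 0$. In other words, it is the depth of the explored tree, not the duration of the excursion, that must be controlled. In the null recurrent subcase your proposal is only a sketch (as you acknowledge), and the route via ``height $\asymp\sqrt{\text{size}}$ of critical trees'' is not straightforward: the explored set $R_{\tau_k}$ is a $k$-fold iterated gluing of critical trees at leaves, not a single critical tree, so the height estimate does not apply to it directly, and one must also control the maximum over the $L_k$ fresh trees attached in the straddling excursion. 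The paper avoids all of this with the cruder bound $|X_n|\le |R_{\tau_{k-1}}|+\tau_1^i+\tau_1^j$ (depth of the old range plus at most two fresh excursion lengths), combined with the fact, specific to the null recurrent case and proved in Theorem~\ref{thm:range-null-rec}, that $\tau_{k-1}/\tau_k\to 0$ so that $|R_{\tau_{k-1}}|/\tau_k\to 0$. Your recurrent-case argument therefore needs to be replaced along these lines.
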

The constant $l$ is in the following relation with the constant $\alpha$ from Theorem \ref{main-thm-range} in the transient and null recurrent case:
\begin{equation}\label{eq:einstein}
2\alpha-l=1.
\end{equation}
We call this equation the \emph{Einstein relation for rotor walks}.
We state similar results for rotor walks on Galton-Watson trees $\T$ with random initial configuration of rotors, and we show that in this case, the constants $\alpha$ and $l$ depend only on the distribution of the configuration $\rho$ and on the offspring distribution of $\T$.

Range of rotor walks and its shape was considered also in \cite{florescu_levine_peres} on comb lattices and on Eulerian graphs. On combs, it is proven that the size of the range $|R_n|$ is of order $n^{2/3}$, and its asymptotic shape is a diamond. It is conjectured in \cite{KD}, that on $\Z^2$, the range of uniform rotor walks is asymptotically a disk, and its size is of order $n^{2/3}$. In the recent paper \cite{swee-hong-wsf}, for special cases of initial configuration of rotors on transient and vertex-transitive graphs, it is shown that the occupation rate of the rotor walk is close to the Green function of the random walk.
 
\paragraph*{Organization of the paper.} We start by recalling some basic facts and definitions about rotor walks and Galton-Watson trees in Section \ref{sec:prelim}. Then in Section \ref{sec:range} we prove Theorem \ref{main-thm-range}, while in Section \ref{sec:rate-escape} we prove Theorem \ref{main-thm-escape}. Then we prove in Theorem \ref{main-thm-range-GW} and in Theorem \ref{main-thm-escape-GW} a law of large numbers for the range and the existence of the speed, respectively for rotor walks on Galton-Watson trees.
Finally in Appendix \ref{sec:contour} we look at the contour function of the range of recurrent rotor walks and its recursive decomposition. 

\section{Preliminaries}\label{sec:prelim}

\subsection{Rotor walks}\label{sec:rotor-walks}

Let $\TT_d$ be the regular infinite rooted tree with degree $d+1$, with root $r$, and an additional vertex $o$ which is connected to the root and is called the \emph{sink}, and let  $\widetilde{\TT}_d = \TT_d\setminus\{o\}$. Every vertex $v\in\widetilde{\TT}_d$ has $d$ children and one parent. For any connected subset $V\subset \widetilde{\TT}_d$, define the set of leaves in $\widetilde{\TT}_d$ as
$\partial_o V = \{v \in \widetilde{\TT}_d\setminus V: \exists u\in V \text{ s. t. } u\sim v\}$ as the set of vertices outside of $V$ that are children of vertices of $V$, that is, $\partial_o V$ is the outer boundary of $V$.  On $\widetilde{\TT}$, the size of $\partial_o V$ depends only on the size of $V$:
\begin{equation}
\label{eq:number_of_leaves}
|\partial_o V| = 1+(d-1)| V|.
\end{equation}
A \emph{rotor configuration} $\rho$ on $\widetilde{\TT}_d$ is a function $\rho: \widetilde{\TT}_d\to \mathbb{N}_0$, with $\rho(x)\in \{0,\ldots,d\}$, which can be interpreted as following: each vertex $v\in\widetilde{\TT}_d$ is endowed with a rotor $\rho(v)$ (or an arrow) which points to one of the $d+1$ neighbors. We fix from the beginning a \emph{counterclockwise ordering of the neighbors} $v^{(0)},v^{(1)},\ldots,v^{(d)}$, which represents the order in which the neighbors of a vertex are visited, where $v^{(0)}$ is the parent of $v$, and $v^{(1)},\ldots,v^{(d)}$ are the children. A rotor walk $(X_n)$ on $\TT_d$ is a process where at each time step $n$, a walker located at some vertex $v\in \widetilde{\TT}_d$ first increments the rotor at $v$, i.e. it changes its direction to the next neighbor in the counterclockwise order, and then the walker moves there. We start all our rotor walks at the root $r$, $X_0=r$, with initial rotor configuration $\rho_0=\rho$. Then $X_n$ represents the position of the rotor walk at time $n$, and $\rho_n$ the rotor configuration at time $n$.  The rotor walk is also used as \emph{rotor-router walk} in the literature. At each time step, we record not only the position of the walker, but also the configuration of rotors, which changes only at the current position. More precisely, if at time $n$ the pair of position and configuration is $(X_n,\rho_n)$, then at time $n+1$ we have 
\begin{equation*}
\rho_{n+1}(x)=
\begin{cases}
\rho_n(x)+1 \mod (d+1)&, \text{if } x= X_n\\
\rho_n(x) &, \text{otherwise}.
\end{cases}
\end{equation*}
and $X_{n+1}=X_n^{(\rho_{n+1}(X_n))}$.
As defined above, $(X_n)$ is a deterministic process once $\rho_0$ is determined. Throughout this paper we are interested in rotor walks $(X_n)$ which start with a \emph{random initial configuration} $\rho_0$ of rotors, which makes $(X_n)$ a random process that is not a Markov chain.

\paragraph{Random initial configuration.} For the rest of the paper we consider $\rho$ a random initial configuration on $\widetilde{\TT}_d$, in which $(\rho(v))_{v\in \widetilde{\TT}_d}$ are independent random variables  with distribution on $\{0,1,\ldots,d\}$ given by 
\begin{equation}\label{eq:ran-in-cfg}
\mathbb{P}[\rho(v)=j]=r_j,
\end{equation}
with $\sum_{j=0}^{d}r_j=1$.
If $\rho(v)$ is uniformly distributed on the neighbors, then we call the corresponding rotor walk \emph{uniform rotor walk}.
Depending on the distribution of the initial rotor configuration $\rho$, the rotor walk can exhibit one of the following two behaviors: either the walk visits each vertex infinitely often, and it is \emph{recurrent}, or each vertex is visited at most finitely many times, and it escapes to infinity, and this is the \emph{transient case}. For rotor walks on regular trees, the  recurrence-transience behavior was proven in \cite[Theorem 6]{angel_holroyd}, and the proof is based on the extinction/survival of a certain branching process, which will also be used in our results.
Similar results on recurrence and transience of rotor walks on Galton-Watson trees have been proven  in \cite{huss_muller_sava_gw}.

For the rotor configuration $\rho$ on $\widetilde{\TT}_d$, a \emph{live path} is an infinite sequence of vertices $ v_1,v_2,\ldots$ each being the parent of the next, such that for all $i$, the indices $k$ for which $v_{i+1}=v_i^{(k)}$ satisfy $\rho(v)<k$. In other words, $v_1,v_2,\ldots$ is a live path if and only if all $v_1,v_2,\ldots$ are good, and a particle located at $v_i$ will be sent by the rotor walker forward to $v_{i+1}$ before sending it back to the root. An \emph{end} in $\TT_d$ is an infinite sequence of vertices $o=v_0,v_1,\ldots$, each being the parent of the next. 
An end is called \emph{live} if the subsequence $(v_i)_{i\geq j}$ starting at one of the vertices  $o=v_0,v_1,\ldots$ is a live path.
The rotor walk $(X_n)$ can  escape to infinity only via a live path.

\subsection{Galton-Watson trees}

Consider a Galton-Watson process $(Z_n)_{n\in\mathbb{N}_o}$ with offspring distribution $\xi$
given by $p_k=\mathbb{P}[\xi=k]$. We start with one particle $Z_0 = 1$, which has $k$ children with probability $p_k$;
then each of these children independently has children with the same offspring distribution $\xi$, and so on. Then $Z_n$ represents the number of particles in the $n$-th generation.
If $(\xi_{i}^n)_{i,n\in\mathbb{N}}$ are i.i.d. random variables distributed as $\xi$, then 
\begin{equation*}
Z_{n+1}=\sum_{i=1}^{Z_n}\xi_i^n,
\end{equation*}
and $Z_0=1$. Starting with a single progenitor, this process yields a random family tree $\mathcal{T}$, which is called a Galton-Watson tree.
The \emph{mean offspring number} $m$ is defined as the expected number of children of one particle $m=\mathbb{E}[\xi]$. In order to avoid trivialities, we will assume $p_0+p_1<1$. The generating function of the process is the function $f(s)=\sum_{k=0}^{\infty}p_ks^k$ and $m=f'(1)$. If  is well known that the \emph{extinction probability} of the process, defined as $q=\lim_{n\to\infty}\mathbb{P}[Z_n=0]$, which is the probability the process ever dies out, has the following important property.
\begin{theorem}[Theorem 1, page 7, in \cite{athreya-ney}] 
\label{thm:GW-survival}
The extinction probability of $(Z_n)$ is the smallest nonnegative root of $s=f(s)$. It is $1$ if $m\leq 1$ and $<1$ if $m>1$.
\end{theorem}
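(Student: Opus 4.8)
The plan is to track the finite-horizon extinction probabilities $q_n := \Pb[Z_n = 0]$ through the iterates of the generating function and to combine monotone convergence with the convexity of $f$. First I would record the elementary analytic properties of $f(s) = \sum_{k\geq 0} p_k s^k$ on $[0,1]$: it is continuous, nondecreasing (since $f'(s) = \sum_{k\geq 1} k p_k s^{k-1} \geq 0$), strictly convex (since the hypothesis $p_0 + p_1 < 1$ forces $p_{k_0} > 0$ for some $k_0 \geq 2$, whence $f''(s) > 0$ on $(0,1]$), and satisfies $f(0) = p_0$, $f(1) = 1$, $f'(1) = m$. The crucial identity is the recursion $q_{n+1} = f(q_n)$ with $q_0 = 0$: conditioning on the first generation $Z_1 = k$ and using that the $k$ subtrees rooted at the children evolve as independent copies of the process, the event $\{Z_{n+1} = 0\}$ has conditional probability $q_n^k$, so $q_{n+1} = \sum_k p_k q_n^k = f(q_n)$. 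Since extinction by time $n$ forces extinction by time $n+1$, the sequence $(q_n)$ is nondecreasing and bounded by $1$, hence converges to some $q \in [0,1]$; passing to the limit in the recursion and using continuity of $f$ gives $q = f(q)$.

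Next I would show that this limit is the \emph{smallest} nonnegative root of $s = f(s)$, which is also where the identification of $q$ with the definition $q = \lim_n \Pb[Z_n = 0]$ is pinned down. Let $s^\ast \in [0,1]$ be any root of $s = f(s)$. Since $q_0 = 0 \leq s^\ast$ and $f$ is nondecreasing, an immediate induction gives $q_n = f^{(n)}(0) \leq f^{(n)}(s^\ast) = s^\ast$ for every $n$, and therefore $q = \lim_n q_n \leq s^\ast$. Thus $q$ is dominated by every fixed point of $f$, so it is the smallest nonnegative root.

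For the dichotomy I would analyze $g(s) := f(s) - s$ on $[0,1]$, which is strictly convex with $g(1) = 0$ and left derivative $g'(1) = m - 1$. If $m \leq 1$, then $g'(1) \leq 0$, and strict convexity of $g$ gives $g'(s) < g'(1) \leq 0$ for $s < 1$, so $g$ is strictly decreasing on $[0,1]$ and $g(s) > g(1) = 0$ for all $s < 1$; hence $s = 1$ is the only root and $q = 1$. If $m > 1$, then $g'(1) > 0$, so by continuity $g' > 0$ on some interval $(1-\delta, 1]$ and $g(s) < g(1) = 0$ for $s$ slightly below $1$; combined with $g(0) = p_0 \geq 0$, the intermediate value theorem produces a root in $[0,1)$ (and in the degenerate case $p_0 = 0$ one simply notes $f(0) = 0$, so $q = 0$), forcing $q < 1$. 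I expect the only delicate point to be this last step: one must apply the convexity argument on the correct interval and treat the boundary behavior of $f'$ at $s = 1$ carefully, since $f'(1)$ is a left limit of a power series that may equal $+\infty$ (the case $m = \infty$ is covered because $f'(s) \uparrow m$ as $s \uparrow 1$), together with the degenerate endpoints $p_0 = 0$ and the critical value $m = 1$. Everything else is a routine consequence of the monotone recursion $q_{n+1} = f(q_n)$ and of the monotonicity and continuity of $f$.
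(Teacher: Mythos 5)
Your proof is correct: the recursion $q_{n+1}=f(q_n)$ with $q_0=0$, the monotone convergence to the smallest fixed point, and the convexity dichotomy via $g(s)=f(s)-s$ (with the non-degeneracy $p_0+p_1<1$ supplying strict convexity, and the left-limit $f'(s)\uparrow m$ handling $m=\infty$) is exactly the classical argument. The paper does not prove this statement itself --- it quotes it from Athreya--Ney --- and your write-up is the standard proof given there, so there is nothing to reconcile.
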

With probability $1$, we have $Z_n\to 0$ or $Z_n\to \infty$ and  $\lim_n\mathbb{P}[Z_n=0]=1-\lim_n \mathbb{P}[Z_n=\infty]=q$. For more information on Galton-Watson processes, we refer to  \cite{athreya-ney}. When $m<1$, $=1$, or  $>1$, we shall refer to the  Galton-Watson tree as \emph{subcritical, critical, or supercritical}, respectively. 


\section{Range on regular trees}\label{sec:range}

For a simple random walk on a regular tree $\TT_d$, $d\geq 2$, which is transient, if we denote by $S_n$ its range, then it is known \cite[Theorem 1.2]{chen_shijian_zhou} that $|S_n|$ satisfies a law of large numbers:
\begin{align}
\lim_{n\to\infty}\frac{|S_n|}{n} = \frac{d-1}{d}\qquad \text{almost surely}.
\end{align}
We prove a similar result for the range of any rotor walk with random initial configuration on a regular tree 
$\TT_d$.  From
 \cite[Theorem 6]{angel_holroyd}, $(X_n)$ is recurrent if
 $\E[\rho(v)] = \sum_{j=1}^d j r_j \geq d - 1$ and transient if $\mathbb{E}[\rho(v)]<d-1$. 
The tree of good children for the rotor walk, denoted $\T^{\mathsf{good}}_d$ and defined in Section \ref{sec:rotor-walks}, is then a Galton-Watson process with offspring distribution
$\xi=$ number of good children of a vertex, given by
\begin{equation*}
 \mathbb{P}[\xi=j]=r_{d-j}, \text{ for } j=0,1,\ldots d.
\end{equation*}
Each vertex has, independently of all the others, a number of good children with the same distribution $\xi$.
The mean offspring number of $\T^{\mathsf{good}}_d$ is $m=d-\mathbb{E}[\rho(v)]$. Let  
$f(s)=\sum_jr_{d-j}s^j$ be the generating function for $\T^{\mathsf{good}}_d$.

The lemma below is a key observation that is crucial for the main results of this paper. For a proof, we refer to 
\cite{angel_holroyd}.
\begin{lemma}\label{lem:key-lemma}
Let $\T^{\mathsf{good}}_d$ be the tree of good children of the sink vertex of the current rotor configuration. Then for every excursion (i.e. a rotor walk that is started at the sink vertex and is stopped the first time it returns to the sink vertex), we have the following:
\begin{enumerate}[(a)]
\item If the component of the sink vertex in $\T^{\mathsf{good}}_d$ is finite, then every vertex $v$ in this component will be visited in the excursion exactly $d+1-\mathbb{E}[\rho(v)]$ times.
\item If the component of the sink vertex in $\T^{\mathsf{good}}_d$ is infinite, then the walker will escape through the rightmost live path. Furthermore, every vertex to the right of this live path will be visited exactly 
$d+1-\mathbb{E}[\rho(v)]$ times.

\end{enumerate}
\end{lemma}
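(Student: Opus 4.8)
Since the rotor configuration is fixed during an excursion, the statement is deterministic and the plan is to read off the visit counts from the rotor mechanism together with the tree structure. Write $N(v)$ for the number of visits to $v$ in the excursion. The mechanism is that on its $k$-th visit the walker advances the rotor at $v$ from its initial value and leaves through neighbour $v^{((\rho(v)+k)\bmod(d+1))}$; hence in $d+1-\rho(v)$ successive visits the walker addresses each good child $v^{(\rho(v)+1)},\dots,v^{(d)}$ exactly once and then, on the last of these visits, returns to the parent $v^{(0)}$ with the rotor back at $0$. So the target count $d+1-\rho(v)$ is precisely ``one full sweep through the good children and out to the parent'', and the whole lemma reduces to showing that the relevant vertices are entered from their parent exactly once and leave to their parent exactly once.

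I would first isolate a finite \emph{principle} that contains part (a): a first-return excursion visits exactly the good-component of the sink, each vertex $v$ of it exactly $d+1-\rho(v)$ times. The key structural fact is that no vertex is entered from its parent more than once: entering a child twice would require the parent's rotor to complete a full revolution between the two entries, in particular sending the walker up to the grandparent an extra time, and iterating this up the tree would force a second visit to the sink before the excursion closes --- impossible. Granting this, each reached vertex is entered from above exactly once (the first visit to any reached vertex is necessarily from its parent), and edge-balance for the finite excursion --- equal numbers of crossings in the two directions on every edge, since the excursion starts and ends at the sink --- together with the rotor mechanism forces a single up-crossing at $v$ and hence the equality $N(v)=d+1-\rho(v)$. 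Equality means each visited vertex addresses all of its good children, so the visited set is exactly the good-component of the sink, which finiteness of the excursion forces to be finite. Part (a) is the finite case of this principle.

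For part (b) the component of $o$ is infinite. I would define the escape path $o=u_0,u_1=r,u_2,\dots$ greedily, taking $u_{i+1}$ to be the good child of $u_i$ of smallest index --- equivalently the rightmost one in the counterclockwise embedding, where the rotor sweeps from right to left --- whose good-component is infinite; such a child exists by K\"onig's lemma because $u_i$'s component is infinite and the tree is locally finite, so $P=(u_i)_{i\ge0}$ is a live path. At each $u_i$ the good children lying strictly to the right of $P$ (those of index between $\rho(u_i)$ and that of $u_{i+1}$) all have finite components by minimality, so by part (a) their subtrees are fully explored and the walker returns to $u_i$ each time, after which it leaves to $u_{i+1}$. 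The crucial point is that the walker never returns to $u_i$ thereafter: a return would make the trajectory from entering $u_{i+1}$ up to the next visit of $u_i$ a finite excursion from $u_i$ into the subtree of $u_{i+1}$, which by the principle of the previous paragraph would visit the entire good-component of $u_{i+1}$ in finitely many steps, contradicting its infinitude. Hence the walker descends along $P$ forever, escaping to infinity through the rightmost live path, and the vertices it visits $d+1-\rho(v)$ times are exactly those in the explored finite subtrees, i.e.\ the vertices strictly to the right of $P$.

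The step I expect to be the main obstacle is precisely this ``no return, hence escape'' claim in part (b). The naive recursion --- the walker does not return to $u_i$ because it does not return to $u_{i+1}$ --- is circular, since there is no least level from which to start an induction along an infinite path. The way around it is to avoid induction on the path altogether and to use the finite principle of the second paragraph as a single, self-contained contradiction: any genuine return would exhibit a finite excursion filling an infinite good-component, which is impossible. With this in hand, the identification of the escape route with the greedy rightmost live path, and the visit counts $d+1-\rho(v)$ to its right, follow directly from part (a) and the rotor mechanism.
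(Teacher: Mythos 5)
The paper itself does not prove this lemma --- it refers the reader to Angel and Holroyd --- so there is no internal proof to compare against; your argument is essentially the standard one for that cited result, and its skeleton is sound: the cyclic exit order reduces everything to showing that each vertex is entered from its parent at most once during the excursion, the no-double-entry claim propagates up the tree to force an impossible second departure from the sink, and the non-return along the live path is correctly obtained not by an (indeed circular) induction along the ray but by the contradiction that a completed sub-excursion below a vertex $w$ would have to sweep the entire good-component of $w$. Two steps are compressed and should be filled in. First, ``a single up-crossing at $v$'' does not by itself yield $N(v)=d+1-\rho(v)$: the cyclic order only places exits to $v^{(0)}$ at positions $d+1-\rho(v)$ modulo $d+1$ in the exit sequence, so a priori $N(v)$ could be as large as $2d+1-\rho(v)$ while still producing exactly one exit to the parent. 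You need the additional observation that $v$ cannot be revisited after it exits to its parent: by the tree structure any later visit would require re-entering the subtree below $v$ through the edge from $v^{(0)}$, i.e.\ a second entry from the parent, which your claim excludes; only then is the unique exit to the parent also the last exit, pinning $N(v)=d+1-\rho(v)$. Second, your ``finite principle'' is conditioned on the excursion being a genuine first-return excursion, whereas part (a) assumes only that the good-component is finite, so you must also argue termination; this does follow from your machinery (a non-terminating excursion visits infinitely many vertices, and your analysis confines all visited vertices to the good-component of the sink), but it is a needed step, not a restatement. Finally, you correctly read the statement's $d+1-\mathbb{E}[\rho(v)]$ as the typo it is for $d+1-\rho(v)$; with the two points above made explicit, the proposal is a complete and correct proof.
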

For proving Theorem \ref{main-thm-range}, we shall treat the three cases separately: the positive recurrent, null recurrent and transient case.

\subsection{Recurrent rotor walks}
\label{subsec:rec-rot-regtrees}

In this section, we consider recurrent rotor walks $(X_n)$ on $\TT_d$, that is, once again from  \cite[Theorem 6]{angel_holroyd}   $\E[\rho(v)] \geq d - 1$. Then  $\T^{\mathsf{good}}_d$ has mean offspring number $m\leq 1$, which by Theorem \ref{thm:GW-survival} dies out with probability one. While in the case $m<1$, where the rotor walk is positive recurrent, the expected size of $\T^{\mathsf{good}}_d$ is finite, this is not the case when $m>1$. For this reason we  handle these two cases separately.
In order to prove a law of large numbers for the range $R_n= \{X_0,X_1,\ldots, X_n\}$ of the rotor walk up to time $n$, we first look 
at the behavior of the rotor walk at the times when it returns to the sink $o$. 
Define the times $(\tau_k)$ of the $k$-th return to the sink $o$, by: $\tau_0 = 0$ and for $k\geq 1$ let
\begin{equation}\label{eq:tauk-rec}
\tau_k = \inf\{n > \tau_{k-1}: X_n = o\}.
\end{equation}
At time $\tau_{k}$, the walker is at sink, all rotors in the visited set $R_{\tau_k}$
point towards the root, while all other
rotors still are in their initial configuration. Between the two consecutive stopping times $\tau_{k-1}$ and $\tau_k$, the rotor walk performed a depth first search in the finite subtree induced by  $R_{\tau_k}$, by visiting every child
of a vertex in right to left order. For every vertex in $v\in R_{\tau_k}$ we can uniquely associate the edge $(v,v^{(0)})$, with $v^{(0)}$ being the unique ancestor of $v$, which implies that $|R_{\tau_k}|$ equals the number of edges in the tree induced by $R_{\tau_k}$. In a depth first search of $R_{\tau_k}$, each edge is visited exactly two times, and in view of the bijection above, it requires exactly $2|R_{\tau_k}|$  steps to return to the origin. We can then deduce that
\begin{equation}
\label{eq:tau_k_increments}
\tau_{k} - \tau_{k-1} = 2 |R_{\tau_k}|.
\end{equation}
 Since we are in the recurrent case, where the rotor walk returns to the sink infinitely many times, these stopping times are almost surely finite. 

\subsubsection{Positive recurrent rotor walks}

If $\E[\rho(v)] > d - 1$, we prove the following.

\begin{theorem}
\label{thm:range_lln_homtree}
For a positive recurrent rotor walk $(X_n)$ on $\TT_d$, with $d\geq 2$ we have
\begin{align*}
\lim_{k\to\infty}\frac{| R_{\tau_k}|}{\tau_k} = 
\frac{d-1}{2\E[\rho]}, \quad \text{almost surely}.
\end{align*}
\end{theorem}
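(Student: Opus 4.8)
The plan is to reduce everything to the geometric growth of the sequence $a_k := |R_{\tau_k}|$. By \eqref{eq:tau_k_increments} we have $\tau_k = 2\sum_{j=1}^{k} a_j$, so it suffices to identify $\beta := \lim_k a_k/a_{k-1}$; an Abel/Stolz--Cesàro argument then gives $a_k/\tau_k = a_k/\bigl(2\sum_{j\le k}a_j\bigr) \to (\beta-1)/(2\beta)$, and the theorem reduces to checking that this equals $(d-1)/(2\E[\rho])$.

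First I would analyze how the range grows across a single excursion. At time $\tau_{k-1}$ every rotor in $R_{\tau_{k-1}}$ points to the root, so during the $k$-th excursion the depth-first search forces the walker, at each already-visited vertex, to cycle through all of its children. Consequently every boundary vertex $w\in\partial_o R_{\tau_{k-1}}$ is visited for the first time, and upon entering the fresh region at $w$ the walker explores exactly the tree of good children rooted at $w$ before returning. Since distinct boundary vertices generate disjoint subtrees lying in never-visited territory carrying fresh i.i.d. rotors, conditionally on $\F_{k-1}$ we obtain
\[
a_k - a_{k-1} = \sum_{w\in\partial_o R_{\tau_{k-1}}} T_w,
\]
where the $T_w$ are i.i.d. copies of the total progeny of $\T^{\mathsf{good}}_d$, and $|\partial_o R_{\tau_{k-1}}| = 1+(d-1)a_{k-1}$ by \eqref{eq:number_of_leaves}. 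In the positive recurrent regime $\T^{\mathsf{good}}_d$ is subcritical with mean $m = d-\E[\rho]<1$, so $\mu:=\E[T_w]=1/(1-m)=1/(\E[\rho]-(d-1))$ is finite, and $T_w$ has finite variance since the offspring is bounded by $d$. Taking conditional expectations yields $\E[a_k\mid\F_{k-1}] = \beta a_{k-1}+\mu$ with $\beta = 1+(d-1)\mu = \E[\rho]/(\E[\rho]-(d-1))>1$, which is precisely the ratio needed for the final answer.

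Next I would upgrade this mean recursion to almost sure geometric growth. Setting $c:=\mu/(\beta-1)$, the affine recursion shows that $M_k := (a_k+c)\beta^{-k}$ is a nonnegative $\F_k$-martingale. The conditional variance satisfies $\Var(a_k\mid\F_{k-1}) = (1+(d-1)a_{k-1})\Var(T_w)$, so $\E[(M_k-M_{k-1})^2] = \beta^{-2k}\,\E[\Var(a_k\mid\F_{k-1})] = O\bigl(\beta^{-2k}\E[a_{k-1}]\bigr)$; since $\E[a_{k-1}]=O(\beta^{k})$ the series $\sum_k\E[(M_k-M_{k-1})^2]$ converges. Hence $M_k$ is bounded in $L^2$ and converges almost surely and in $L^2$ to a limit $W$ with $\E[W]=M_0>0$. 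Thus $a_k\sim W\beta^k$, and feeding this into $\tau_k = 2\sum_{j\le k}a_j\sim \tfrac{2\beta}{\beta-1}a_k$ gives the limit $(\beta-1)/(2\beta)=(d-1)/(2\E[\rho])$ on the event $\{W>0\}$.

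The main obstacle is to show that $W>0$ almost surely, so that the deterministic constant is attained on a set of full measure rather than merely in mean. Here I would combine the deterministic lower bound $a_k\ge d\,a_{k-1}$ (each $T_w\ge 1$, so the range dominates the whole boundary) with a zero--one argument: the event $\{W=0\}$ is a tail-type event for the branching recursion and therefore has probability $0$ or $1$, and $\E[W]>0$ excludes probability $1$. Care is also needed in the structural step, namely verifying rigorously from the rotor mechanics that one excursion adds exactly the union of the independent good-children trees rooted at the previous boundary; this is where Lemma~\ref{lem:key-lemma} and the end-of-excursion invariant (all rotors in $R_{\tau_k}$ point to the root) are used.
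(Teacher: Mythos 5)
Your proposal is correct and arrives at the right constant, but the technical engine differs from the paper's, so a comparison is worthwhile. The structural core is identical: both arguments observe that during the $k$-th excursion the walk grafts, at each vertex of $\partial_o R_{\tau_{k-1}}$, an independent copy of the subcritical good tree, so that $L_k=\lvert\partial_o R_{\tau_k}\rvert=1+(d-1)\lvert R_{\tau_k}\rvert$ is a supercritical Galton--Watson process with mean $\nu=1+(d-1)/(1-m)$, which is your $\beta$. Where you diverge is in extracting almost sure geometric growth. The paper invokes the Seneta--Heyde theorem for $(L_k)$, which needs no moment hypotheses and hands over $\Pb[W=0]=$ extinction probability $=0$ for free (since $L_k\geq d$ always); it then converts $L_k/c_k\to W$ into $\tau_{k-1}/\tau_k\to 1/\nu$ and reads off the limit from \eqref{eq:tau_k_increments}. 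You instead run an explicit $L^2$ martingale argument on $M_k=(a_k+c)\beta^{-k}$, which is more elementary and self-contained but costs two extra verifications: (i) finite variance of the total progeny $\lvert\R_1\rvert$ --- true, since a subcritical Galton--Watson tree with offspring bounded by $d$ has all moments (the paper in fact carries this computation in a commented-out lemma) --- and (ii) the positivity $W>0$ almost surely. On (ii), two caveats. First, the deterministic bound $a_k\geq d\,a_{k-1}$ does not by itself help: one has $d\leq\beta$ with equality only when $m=0$, so $a_k\geq d^k$ is compatible with $a_k\beta^{-k}\to 0$. Second, $\{W=0\}$ is not literally a tail event, so ``tail-type, hence $0$--$1$'' should be replaced by the standard inheritance argument: $W\stackrel{d}{=}\beta^{-1}\sum_{i=1}^{L_1}W^{(i)}$ with the $W^{(i)}$ i.i.d.\ copies of $W$ independent of $L_1$, so $\Pb[W=0]$ is a fixed point of the generating function of $L_1$ and hence lies in $\{0,1\}$ because $L_1\geq d\geq 2$ a.s., and $\E[W]=M_0>0$ rules out the value $1$. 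With that repair the proof is complete; the final Stolz--Ces\`{a}ro step $\tau_k=2\sum_{j\leq k}a_j\sim\frac{2\beta}{\beta-1}a_k$ and the identity $(\beta-1)/(2\beta)=(d-1)/(2\E[\rho])$ are both correct.
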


\begin{proof}
For simplicity of notation, we write $\mathcal{R}_k = R_{\tau_k}$. The tree of good children, $\T^{\mathsf{good}}_d=\mathcal{R}_1$ is a subcritical
Galton-Watson tree with mean offspring number $m = d - \E[\rho]<1$, that is, it dies out almost surely. The expected size of the range up to time $\tau_1$ is given by $\E[|\mathcal{R}_1|] = \frac{1}{1-m}>1$.
At the time $\tau_k$ of the $k$-th return to the sink $o$, all rotors in the previously visited set
$\mathcal{R}_k$ point towards the root, and the remaining rotors are
still in their initial configuration. Thus during the
time interval $(\tau_k, \tau_{k+1}]$, the rotor walk visits
all the leaves of $\R_k$ from right to left, and
at each leaf it attaches independently a (random) subtree that has the same distribution as $\R_1$.
If we denote by $L_k = |\partial_o \R_k|$, then
$L_{k+1} = \sum_{i=1}^{L_k} L_{1,i}$, where $L_{1,i}$
are independent copies of $L_1$, and $(L_k)$ is a
supercritical Galton-Watson process with mean offspring number $\nu$ (the mean number of leaves of $|\R_1|$), which in view of \eqref{eq:number_of_leaves}, is given by
$$\nu = 1+(d-1)\mathbb{E}[|\R_1|]=1 + \frac{d-1}{1-m}>1.$$
Moreover $\Pb[L_k = 0] = 0$.
Since $\nu>1$ it follows from the Seneta-Heyde Theorem (see \cite{lyons-peres-book})  applied to the supercritical Galton-Watson process $(L_k)$ that there
exists a sequence of numbers $(c_k)_{k\geq 1}$ and a nonnegative random variable $W$ such that
\begin{enumerate}[(i)]
\setlength{\itemsep}{-1pt}
\item $\displaystyle\frac{L_k}{c_k} \to W,\quad\text{almost surely.}$
\item $\displaystyle
\Pb[W = 0]$ equals the probability of extinction of $(L_k)$.
\item $\displaystyle \frac{c_{k+1}}{c_k} \to \nu$.
\end{enumerate}
By \eqref{eq:number_of_leaves} we have $|\R_k| = \frac{L_k - 1}{d-1}$ almost surely, which together with equation \eqref{eq:tau_k_increments} yields
\begin{align*}
(d-1)\big(\tau_k - \tau_{k-1}\big) + 2 = 2(d-1) |\R_k| + 2 = 2 L_k,\quad \text{almost surely},
\end{align*}
and dividing by $c_k$ gives
\begin{align*}
(d-1)\left(\frac{\tau_k}{c_k} - \frac{\tau_{k-1}}{c_{k-1}}\cdot\frac{c_{k-1}}{c_k}\right) +\frac{2}{c_k} = 2\frac{L_k}{c_k},\quad \text{almost surely}.
\end{align*}
It then follows that
\begin{align*}
\left(\frac{\tau_k}{c_k} - \frac{\tau_{k-1}}{c_{k-1}}\cdot\frac{c_{k-1}}{c_k}\right) = \frac{2}{d-1} \frac{L_k}{c_k} - \frac{2}{(d-1) c_k}\xrightarrow[k\to \infty]{} \frac{2}{d-1} W,
\end{align*}
since $c_k\to\infty$.
Since $\frac{\tau_k}{c_k}$ and $\frac{\tau_{k-1}}{c_{k-1}}$ either both diverge or have
the same limit $\tau^\star$, it follows that
\begin{align*}
\left(\tau^\star - \frac{\tau^\star}{\nu}\right) = \frac{2}{d-1} W.
\end{align*}
Thus $\frac{\tau_k}{c_k}$ converges almost surely to an almost surely positive random variable $\tau^\star$
\begin{equation*}
\tau^\star = \lim_{k\to\infty}\frac{\tau_k}{c_k} = \left(1-\frac{1}{\nu}\right)^{-1} \frac{2}{d-1} W > 0.
\end{equation*}

Hence 
\begin{equation*}
\frac{\tau_{k-1}}{\tau_k} = \frac{\tau_{k-1}}{c_{k-1}}\cdot
\frac{c_{k-1}}{c_k} \cdot \frac{c_k}{\tau_k}
  \xrightarrow[k\to\infty]{} \tau^\star \cdot \frac{1}{\nu} \cdot \frac{1}{\tau^\star} = \frac{1}{\nu},\quad\text{ almost surely.}
\end{equation*}
Now from \eqref{eq:tau_k_increments} we get
\begin{align*}
\lim_{k\to\infty} \frac{|\R_k|}{\tau_k} &=
\lim_{k\to\infty} \frac{1}{2}\left(1- \frac{\tau_{k-1}}{\tau_k}\right) =\frac{1}{2}\left(1-\frac{1}{\nu}\right) = \frac{d-1}{2(d-m)}
= \frac{d-1}{2\E[\rho(v)]},
\end{align*}
and this proves the claim.
\end{proof}
Passing from the range along a subsequence $(\tau_k)$ to the range $R_n$ at all times requires additional work, because of the exponential growth of the increments $(\tau_{k+1}-\tau_k)$. We next prove that the almost sure limit $\frac{|R_n|}{n}$ exists.

\begin{proof}[Proof of Theorem \ref{main-thm-range}(i)]
Let $x_1,\ldots,x_d$ be the $d$ children of the root vertex of $\mathbb{T}_d$. Let $R_k^{(1)},\ldots,R_k^{(d)}$
be the range of $d$ independent recurrent rotor walks on the tree $\mathbb{T}_d$ with i.i.d. initial rotor configurations, at the $k$-th return to the sink vertex.
Moreover, let $\xi_k^{(1)},\ldots,\xi_k^{(d)}$ be the times of the $k$-th visit to the sink vertex by these rotor walks.
One can couple the original rotor walk with these $d$ independent
rotor walks in such a way that the dynamics of the original rotor walk in the
component of the tree rooted at $x_i$ is given by the dynamics in the $i$-th rotor
walk, for $i=1,\ldots,d$.

Let $n$ be an arbitrary positive number in $(\tau_k,\tau_{k+1})$. Then the original rotor walk $(X_n)$ at time $n$ is located in the component of the tree rooted at $x_i$ for some $i$. The coupling above then gives us these two inequalities:
\begin{align*}
n & \geq \xi_{k+1}^{(1)}+\cdots+\xi_{k+1}^{(i-1)}+\xi_k^{(i)}+\cdots+\xi_k^{(d)}+k\\
R_n & \leq R_{k+1}^{(1)}+\cdots+R_{k+1}^{(i)}+R_k^{(i+1)}+\cdots+R_k^{(d)}+k.
\end{align*}
Note that the addition of $k$ in the inequalities above is to account for the
time spent at the sink vertex in the original rotor walk. The contribution of
$k$ is negligible as $n\to\infty$ as far as we are concerned. It then follows that
\begin{equation*}
\limsup_{n\to\infty}\frac{R_n}{n}\leq \limsup_{n\to \infty}\dfrac{R_{k+1}^{(1)}+\cdots+R_{k+1}^{(i)}+R_k^{(i+1)}+\cdots+R_k^{(d)}}{\xi_{k+1}^{(1)}+\cdots+\xi_{k+1}^{(i-1)}+\xi_k^{(i)}+\cdots+\xi_k^{(d)}},
\end{equation*}
where $k:=k(n)$ and $i:=i(n)$ depend on $n$. By Theorem \ref{thm:range_lln_homtree}, it then follows that
$$
\limsup_{n\to\infty}\frac{R_n}{n}\leq \limsup_{n\to\infty}\alpha\dfrac{\xi_{k+1}^{(1)}+\cdots+\xi_{k+1}^{(i)}+\xi_k^{(i+1)}+\cdots+\xi_k^{(d)}}{\xi_{k+1}^{(1)}+\cdots+\xi_{k+1}^{(i-1)}+\xi_k^{(i)}+\cdots+\xi_k^{(d)}},
$$
for $ \alpha=\frac{d-1}{2\mathbb{E}[\rho]}$. A direct calculation then gives us 
\begin{equation}\label{eq:limsup-r_n}
\limsup_{n\to\infty}\frac{R_n}{n}\leq \alpha+\alpha \limsup_{n\to\infty}\dfrac{\xi_{k+1}^{(i)}-\xi_k^{(i)}}{\xi_{k+1}^{(1)}+\cdots+\xi_{k+1}^{(i-1)}+\xi_k^{(i)}+\cdots+\xi_k^{(d)}}.
\end{equation}
Recall from the proof of Theorem \ref{thm:range_lln_homtree} that there exists $c_k>0$ and an integrable random variable $W$ such that, for any $i$
$$
\lim_{k\to\infty}\frac{\xi^{(i)}_k}{c_k}=W \quad \text{and }\quad \lim_{k\to\infty}\frac{c_{k+1}}{c_k}=\nu.
$$
Let now $W_1,\ldots,W_d$ be i.i.d random variables with the same distribution as $W$. It then follows that
\begin{align*}
\limsup_{n\to\infty}\frac{R_n}{n} & \leq \alpha+\alpha\limsup_{n\to\infty}\dfrac{c_k^{-1}\left(\xi_{k+1}^{(i)}-\xi_k^{(i)}\right)}{c_k^{-1}\left(\xi_{k+1}^{(1)}+\cdots+\xi_{k+1}^{(i-1)}+\xi_k^{(i)}+\cdots+\xi_k^{(d)}\right)}\\
& =\alpha +\alpha\dfrac{(\nu-1)\max_{i\leq d}W_i}{\nu(W_1+\cdots+W_{i-1})+W_i+\cdots+W_d}\\
& \leq \alpha +\alpha (\nu-1)\frac{\max_{i\leq d}W_i}{W_1+\cdots+W_d}.
\end{align*}
The same coupling can be applied not only to the $d$ children of the root, but also to the vertices of $\mathbb{T}_d$ and any given level $j$. This means that, for any $j\geq 0$, we have
$$
\limsup_{n\to\infty}\frac{R_n}{n}\leq \alpha+\alpha (\nu-1)\frac{\max_{i\leq d^j}W_i}{W_1+\cdots+W_{d^j}}.
$$
Since $W$ is an integrable random variable, it follows from the law of large numbers that $\frac{\max_{i\leq d^j}W_i}{W_1+\cdots+W_{d^j}}\to 0$ as $j\to\infty$. Hence we conclude that 
$$
\limsup_{n\to\infty}\frac{R_n}{n}\leq \alpha.
$$
By symmetry, we can also conclude that $\liminf_{n\to\infty}\frac{R_n}{n}\geq \alpha$. Theorem \ref{main-thm-range}(i) now follows.
\end{proof}

If $\nu$ is the mean offspring number of the supercritical Galton-Watson process $(L_k)$,
with $L_k=|\partial_o R_{\tau_k}|$, then we can also write  
\begin{align*}
\lim_{n\to\infty}\frac{|R_n|}{n} = 
\frac{1}{2}\left(1-\frac{1}{\nu}\right), \quad \text{almost surely}.
\end{align*}

\subsubsection{Null recurrent rotor walks}

In this section we consider null recurrent rotor walks, that is $\E[\rho(v)] = d - 1$.
Recall the stopping times $\tau_k$ as defined in \eqref{eq:tauk-rec}. The proofs for the law of large numbers for the range will be slightly different, arising from the fact that the expected return time to the sink for the rotor walker is infinite. We first prove the following.

\begin{theorem}\label{thm:range-null-rec}
For a null recurrent rotor walk $(X_n)$ on $\TT_d$, $d\geq 2$ we have
\begin{equation*}
\lim_{k\to\infty}\dfrac{|R_{\tau_k}|}{\tau_k}=\frac{1}{2}, \text{ almost surely}.
\end{equation*}
\end{theorem}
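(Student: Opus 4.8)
The plan is to reduce the statement to the growth rate of the leaf process $L_k=|\partial_o\R_k|$, where $\R_j:=R_{\tau_j}$, and to exploit that in the critical case the excursions grow so fast that the last one dominates the entire past. Summing the increment identity \eqref{eq:tau_k_increments} gives $\tau_k=2\sum_{j=1}^k|\R_j|$, hence
\[
\frac{|\R_k|}{\tau_k}=\frac12\left(1+\frac{\sum_{j=1}^{k-1}|\R_j|}{|\R_k|}\right)^{-1},
\]
so it suffices to show $\sum_{j=1}^{k-1}|\R_j|=o(|\R_k|)$ almost surely. By \eqref{eq:number_of_leaves} one has $|\R_j|=\frac{L_j-1}{d-1}$, so this is equivalent to $\sum_{j<k}L_j=o(L_k)$.

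Exactly as in the positive recurrent case, $(L_k)$ is the Galton--Watson process $L_{k+1}=\sum_{i=1}^{L_k}L_{1,i}$ with $L_{1,i}$ independent copies of $L_1$ that are independent of $L_k$. The decisive difference is that now $m=d-\E[\rho(v)]=1$, so $\T^{\mathsf{good}}_d$ is critical and $\E[|\R_1|]=(1-m)^{-1}=\infty$; hence the offspring law $L_1=(d-1)|\R_1|+1$ has \emph{infinite mean}, and the Seneta--Heyde normalization (with finite $\nu$) used before is no longer available. Two elementary facts replace it. First, since $|\R_1|\ge1$ we have $L_1\ge d\ge2$, whence $L_{k+1}\ge2L_k$ and $L_k\ge2^k$; consequently $\sum_{j<k}L_j\le2L_{k-1}$, and it is enough to prove $L_k/L_{k-1}\to\infty$ almost surely.

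Establishing $L_k/L_{k-1}\to\infty$ for this infinite-mean process is the step I expect to be the main obstacle, and I would handle it by truncation. Fix $A>0$ and put $\mu_A=\E[\min(L_1,A)]$, which increases to $\E[L_1]=\infty$ as $A\to\infty$. Conditioning on $L_{k-1}$, the variable $L_k$ is a sum of $L_{k-1}$ independent copies of $L_1$, so Chebyshev's inequality gives, for every $\varepsilon>0$,
\[
\Pb\!\left[\frac{1}{L_{k-1}}\sum_{i=1}^{L_{k-1}}\min(L_{1,i},A)<\mu_A-\varepsilon \,\Big|\, L_{k-1}\right]\le\frac{\Var(\min(L_1,A))}{\varepsilon^2\,L_{k-1}}\le\frac{\Var(\min(L_1,A))}{\varepsilon^2\,2^{k-1}},
\]
using the deterministic bound $L_{k-1}\ge2^{k-1}$. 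Since the right-hand side is summable in $k$, Borel--Cantelli yields, almost surely for all large $k$,
\[
\frac{L_k}{L_{k-1}}\ge\frac{1}{L_{k-1}}\sum_{i=1}^{L_{k-1}}\min(L_{1,i},A)\ge\mu_A-\varepsilon.
\]
Thus $\liminf_k L_k/L_{k-1}\ge\mu_A-\varepsilon$ for all $A,\varepsilon$, and letting $\varepsilon\to0$, $A\to\infty$ gives $L_k/L_{k-1}\to\infty$.

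Combining $\sum_{j<k}L_j\le2L_{k-1}$ with $L_{k-1}/L_k\to0$ shows $\sum_{j<k}L_j=o(L_k)$, hence $\sum_{j<k}|\R_j|=o(|\R_k|)$, and the first display gives $|R_{\tau_k}|/\tau_k\to\frac12$ almost surely. The only genuinely new ingredient compared with Theorem \ref{thm:range_lln_homtree} is the infinite-mean estimate of the previous paragraph; everything else follows from the excursion decomposition \eqref{eq:tau_k_increments} and the leaf identity \eqref{eq:number_of_leaves}.
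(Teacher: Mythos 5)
Your proof is correct, and while it rests on the same underlying mechanism as the paper's argument --- the last excursion swallows the whole past because the per-leaf contribution has infinite mean and the number of leaves grows at least geometrically --- the implementation is genuinely different and, in one respect, tighter. The paper works directly with the excursion times: it splits $(\tau_{k-1},\tau_k]$ into per-leaf excursions $(\theta^i_{k-1}-\eta^i_{k-1})$, appeals to the strong law of large numbers to get $\frac{1}{L_{k-1}}\sum_i(\theta^i_{k-1}-\eta^i_{k-1})\to\E[\tau_1]=\infty$, and then closes with a $\liminf$ fixed-point inequality $l\geq\tfrac12\E[\tau_1](1-1/l)$ plus a separate argument to exclude $l=1$. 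You instead telescope \eqref{eq:tau_k_increments} to $\tau_k=2\sum_{j\leq k}|\R_j|$, pass to the leaf process via \eqref{eq:number_of_leaves}, kill the sum over the past with the deterministic doubling bound $L_{j}\leq L_{j+1}/2$, and prove $L_k/L_{k-1}\to\infty$ by truncation, conditional Chebyshev and Borel--Cantelli using $L_{k-1}\geq 2^{k-1}$. Since the excursion lengths and the leaf counts are affine functions of the same i.i.d. subtree sizes, the two proofs use the same probabilistic input; but your truncation argument handles honestly the point the paper glosses over, namely that the ``SLLN'' being invoked is really for a triangular array with fresh i.i.d. entries and a random row length, not for a single i.i.d. sequence. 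Your route also avoids the paper's $l=1$ case analysis entirely. Both proofs rely on the branching structure $L_{k+1}=\sum_{i=1}^{L_k}L_{1,i}$ with the summands independent of the past, which the paper establishes in the positive recurrent section and which carries over verbatim here; you are right to flag the infinite mean of $L_1$ as the only real obstacle, and your treatment of it is sound.
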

\begin{proof}
Rewriting equation \eqref{eq:tau_k_increments}, we get
\begin{equation*}
\frac{1}{2}\left(1-\frac{\tau_{k-1}}{\tau_k}\right)=\frac{|R_{\tau_k}|}{\tau_k},\quad \text{almost surely},
\end{equation*}
and we prove that the quotient $\frac{\tau_{k-1}}{\tau_k}$ goes to zero almost surely. 
We write again $\R_k=R_{\tau_k}$ and we first show that $\frac{\tau_k}{\tau_{k-1}}\to\infty$ almost surely, by finding a lower bound which converges to $\infty$ almost surely. From \eqref{eq:tau_k_increments}, we have
\begin{equation}\label{eq:frac-tau-k}
\frac{\tau_k}{\tau_{k-1}}>2\frac{|\R_k|}{\tau_{k-1}},\quad \text{almost surely}.
\end{equation}
If $\partial_0 \R_{k-1}$ is the set of leaves of $\R_{k-1}$, then in the time interval $\tau_k-\tau_{k-1}$,
the i.i.d critical Galton-Watson trees rooted at the leaves $\partial_0 \R_{k-1}$ will be added 
 to the current range $\R_{k-1}$.

Recall that from the proof of Theorem \ref{thm:range_lln_homtree} that $L_k=|\partial_0 \mathcal{R}_k|$. For each $k=1,2,\ldots$ we partition the time interval $(\tau_{k},\tau_{k+1}]$ into finer intervals, on which the behavior of the range can be easily controlled. The vertices in $\partial_oR_{\tau_k}=\{x_1,x_2,\ldots,x_{L_k}\}$  are ordered from right to left. We introduce the following two (finite) sequences of stopping times $(\eta_k^i)$ and $(\theta_k^i)$ of random length $L_k+1$, as follows: let $\theta_k^0=\tau_k$ and $\eta_k^{L_k+1}=\tau_{k+1}$ and for $i=1,2,\ldots,L_k$
\begin{equation}
\label{eq:eta-theta}
\begin{aligned}
\eta_k^i=&\min\{j> \theta_k^{i-1}:\ X_j=x_i\}\\
\theta_k^i=&\min\{j>\eta_k^i:\ X_j=x_i\text{ and } \rho(x_j)=x_i^{(d)}\}.
\end{aligned}
\end{equation}
That is, for each leaf $x_i$, the time $\eta_k^i$ represents the first time the rotor walk reaches $x_i$, and $\theta_k^i$ represents the last time the rotor walk returns to $x_i$ after making a full excursion in the critical Galton-Watson tree rooted at $x_i$. Then
\begin{equation*}
(\tau_k,\tau_{k+1}]=\left\{\cup_{i=1}^{L_k+1}\left(\theta_k^{i-1},\eta_k^i\right]\right\}\cup \left\{\cup_{i=1}^{L_k}\left(\eta_k^i,\theta_k^i\right]\right\},
\end{equation*}
almost surely. It is easy to see that the increments $(\theta_k^i-\eta_k^i)_{i}$ are i.i.d, and distributed according to the distribution of $\tau_1$, which is the time a rotor walk needs to return to the sink for the first time. Once the rotor walk reaches the leaf $x_i$ for the first time at time $\eta_k^i$, the subtree rooted at $x_i$ was never visited before by a rotor walk. Even more, the tree of good children with root $x_i$ is a critical Galton-Watson tree, which becomes extinct almost surely. Thus, the rotor walk on this subtree is (null) recurrent, and it returns to $x_i$ at time $\theta_k^i$. Then  $(\theta_k^i-\eta_k^i)$ represents the length of this excursion which has expectation $\E[\tau_1]=\infty$. 
In the time interval $(\theta_k^{i-1},\eta_k^i]$, the rotor walk leaves the leaf $x_{i-1}$
and returns to the confluent between $x_{i-1}$ and $x_i$, from where it continues its journey until it reaches $x_i$.
Then $\eta_k^i-\theta_k^{i-1}$ is the time the rotor walk needs to reach the new leaf $x_i$ after leaving $x_{i-1}$. In this time interval, the range does not change, since $(X_n)$ makes steps only in $R_{\tau_k}$.
We have, as a consequence of \eqref{eq:tau_k_increments}
\begin{equation}\label{eq:rk-rec}
|\R_k|=|\R_{k-1}|+\frac{1}{2}\sum_{i=1}^{L_{k-1}}\left(\theta_{k-1}^i-\eta_{k-1}^i\right),\quad \text{ almost surely}.
\end{equation}
By the strong law of large numbers we have on one side 
\begin{equation}\label{eq:lln-increments}
\frac{\sum_{i=1}^{L_{k-1}}\left(\theta_{k-1}^i-\eta_{k-1}^i\right)}{L_{k-1}}\to\E[\tau_1]=\infty,\quad \text{almost surely}.
\end{equation}
From \eqref{eq:number_of_leaves} we obtain $L_{k-1}=1+(d-1)|\R_{k-1}|$ which together with \eqref{eq:frac-tau-k} and \eqref{eq:rk-rec} yields
\begin{align}
\frac{\tau_k}{\tau_{k-1}} & >\frac{\sum_{i=1}^{L_{k-1}}\left(\theta_{k-1}^i-\eta_{k-1}^i\right)}{L_{k-1}}\cdot \frac{1+(d-1)|\R_{k-1}|}{\tau_{k-1}} \label{ineq:tauk}\\
& \geq \frac{\sum_{i=1}^{L_{k-1}}\left(\theta_{k-1}^i-\eta_{k-1}^i\right)}{L_{k-1}}\cdot \frac{|\R_{k-1}|}{\tau_{k-1}}\\
& = \frac{\sum_{i=1}^{L_{k-1}}\left(\theta_{k-1}^i-\eta_{k-1}^i\right)}{L_{k-1}}\cdot 
\frac{1}{2}\left(1-\frac{\tau_{k-2}}{\tau_{k-1}}\right),
\end{align}
almost surely, where the last inequality follows from \eqref{eq:tau_k_increments}.
By letting $l=\liminf \frac{\tau_k}{\tau_{k-1}}$, and taking limits the previous equation yields \begin{equation*}
l\geq \frac{1}{2}\E[\tau_1]\left(1-\frac{1}{l}\right).
\end{equation*}
Unless $l=1$, the right hand side above goes to infinity almost surely, which implies $l=\infty=\liminf \frac{\tau_k}{\tau_{k-1}}\leq\limsup \frac{\tau_k}{\tau_{k-1}}$, therefore $\lim_k \frac{\tau_k}{\tau_{k-1}}=\infty$ and $\lim_k \frac{\tau_{k-1}}{\tau_{k}}=0$, almost surely.
Suppose now $l=1$, almost surely. Once at the root at time $\tau_{k-1}$, until the next return at time $\tau_k$, the rotor walk visits everything that was visited before plus new trees where the configuration is in the initial status. As a consequence of Lemma \ref{lem:key-lemma}, for visiting the previously visited set, it needs time $\tau_{k-1}$, therefore $\tau_k-\tau_{k-1}>\tau_{k-1}$, which gives that $\liminf_k\frac{\tau_k}{\tau_{k-1}}>2$, which contradicts the fact that $l=1$. Therefore $\lim_k \frac{\tau_k}{\tau_{k-1}}=\infty$, almost surely.
Finally, we show that indeed $\lim_{k\to\infty}\frac{|\R_k|}{\tau_k}$ exists. On one hand, from \eqref{eq:tau_k_increments}, it is easy to see that $\frac{|\R_k|}{\tau_k}\leq \frac{1}{2}$.  We have
\begin{equation*}
\frac{1}{2}\geq \frac{|\R_k|}{\tau_k} \geq \liminf \frac{|\R_k|}{\tau_k}=\frac{1}{2}\liminf \left(1-\frac{\tau_{k-1}}{\tau_k}\right)=\frac{1}{2},
\end{equation*}
almost surely, and the claim follows.
\end{proof}

\begin{proof}[Proof of Theorem \ref{main-thm-range}(ii)]
Set again $\mathcal{R}_k:=R_{\tau_k}$ and recall from the proof of Theorem \ref{thm:range-null-rec}, the definition of the stopping times $\eta_k^i$ and $\theta_k^i$, for $i=1,\ldots,L_k=|\partial_0\mathcal{R}_k|$. Let $n$ be an arbitrary positive number in $(\tau_k,\tau_{k+1}]$. Then there exists $i\in\{1,\ldots,L_k\}$ such that  $n\in (\theta_k^{i-1},\theta_k^i].$
Then we have
\begin{align}
|\mathcal{R}_k|+\sum_{j=1}^{i-1}|\mathcal{R}_1^j| & \leq |R_n|\leq |\mathcal{R}_k|+\sum_{j=1}^{i}|\mathcal{R}_1^j|\label{eq:rn}\\
 \tau_k+\sum_{j=1}^{i-1}\tau_1^j& \leq n \leq \tau_k+\sum_{j=1}^{i}\tau_1^j\label{eq:n},
\end{align}
where $(\mathcal{R}_1^j)_j$ and $(\tau_1^j)_j$ are i.i.d. random variables distributed like $\mathcal{R}_{1}=R_{\tau_1}$ and $\tau_1$, respectively. From \eqref{eq:tau_k_increments} we have $\tau_1^j=2|\mathcal{R}_1^j|$, and  from 
the previous two equations we obtain the following upper bound on $\frac{|R_n|}{n}$:
\begin{align*}
\frac{|R_n|}{n} & \leq \frac{1}{2}\left[\dfrac{\tau_k-\tau_{k-1}+\sum_{j=1}^i \tau_1^j}{\tau_k+\sum_{j=1}^{i-1} \tau_1^j} \right]
=\frac{1}{2}\left[1+\dfrac{\tau_1^i}{\tau_k+\sum_{j=1}^{i-1} \tau_1^j}-\dfrac{\tau_{k-1}}{\tau_k+\sum_{j=1}^{i-1} \tau_1^j}\right].
\end{align*}
Since for every $i$, $\tau_1^i$ is almost surely finite and $\tau_k\to\infty$ as $k\to\infty$, the term $\dfrac{\tau_1^i}{\tau_k+\sum_{j=1}^{i-1} \tau_1^j}$ converges almost surely to $0$ as $k\to\infty$. Moreover, since in the null recurrent case, from the proof of Theorem \ref{thm:range-null-rec}, $\frac{\tau_{k-1}}{\tau_k}\to 0$ almost surely, as $k\to\infty$, we also get the almost sure convergence to $0$ of $\dfrac{\tau_{k-1}}{\tau_k+\sum_{j=1}^{i-1} \tau_1^j}$. Taking limits on both sides in the equation above we obtain that $$\limsup_{n\to\infty}\frac{|R_n|}{n}\leq \frac{1}{2},\quad \text{almost surely}.$$
For the lower bound, equations \eqref{eq:rn} and \eqref{eq:n} yield
\begin{align*}
\frac{|R_n|}{n} & \geq \frac{1}{2}\left[\dfrac{\tau_k-\tau_{k-1}+\sum_{j=1}^{i-1} \tau_1^j}{\tau_k+\sum_{j=1}^{i} \tau_1^j} \right]=
\frac{1}{2}\left[1-\dfrac{\tau_1^i}{\tau_k+\sum_{j=1}^{i} \tau_1^j}-\dfrac{\tau_{k-1}}{\tau_k+\sum_{j=1}^{i} \tau_1^j}\right],
\end{align*}
and by the same reasoning as above, we obtain
$$\liminf_{n\to\infty}\frac{|R_n|}{n}\geq \frac{1}{2},\quad \text{almost surely},$$
which together with the upper bound on limsup proves that $\lim_{n\to\infty}\frac{|R_n|}{n}=\frac{1}{2}$ almost surely.
\end{proof}
\subsection{Transient rotor walks}
\label{sec:trans-range}

We consider here the transient case on regular trees, when $\E[\rho(v)]<d-1$.
Then each vertex is visited only finitely many times, and the walk escapes to infinity along a live path. The tree $\T^{\mathsf{good}}_d$ of good children for  $(X_n)$ is a supercritical Galton-Watson tree, with mean offspring number $m=d-\mathbb{E}[\rho(v)]>1$. Thus, $\T^{\mathsf{good}}_d$ survives with positive probability $(1-q)$, where $q\in (0,\infty)$ is the extinction probability, and is the smallest nonnegative root of the equation $f(s)=s$, where $f$ is the generating function of $\T^{\mathsf{good}}_d$.

\begin{notation}
\emph{For the rest of this section, we will always condition on the
event of non-extinction, so that $\T^{\mathsf{good}}_d$ is an infinite random tree.} We denote by 
$\mathbb{P}_{\mathsf{non}}$ and by $\mathbb{E}_{\mathsf{non}}$ the associated probability and expectation conditioned on non-extinction, respectively. 
\end{notation}
That is, if $\mathbb{P}$ is the probability for the rotor walk in the original tree $\T^{\mathsf{good}}_d$, then for some event $A$, we have
\begin{equation*}
\mathbb{P}_{\mathsf{non}}[A]=\frac{\mathbb{P}[A\cap \T^{\mathsf{good}}_d \text{ is infinite}]}{1-q}.
\end{equation*}
In order to understand how the rotor walk $(X_n)$ escapes to infinity, we will decompose the tree 
$\T^{\mathsf{good}}_d$ with generating function $f$ conditioned on non-extinction.
Consider the generating functions
\begin{equation}\label{eq:g-h-genfunc}
g(s)=\dfrac{f\left((1-q)s+q\right)-q}{1-q} \quad \text{and} \quad h(s)=\dfrac{f(qs)}{q}.
\end{equation}
Then the $f$-Galton-Watson tree $\T^{\mathsf{good}}_d$ can be generated by:
\begin{enumerate}[(i)]
\item growing a Galton-Watson tree $\T_g$ with generating function $g$, which has the survival probability $1$.
\item attaching to each vertex $v$ of $\T_g$ a random number $n_v$ of h-Galton–Watson
trees, acting as traps in the environment $\T^{\mathsf{good}}_d$.

\end{enumerate}
The tree $\T_g$ is equivalent (in the sense of finite dimensional distributions) with a tree in which all vertices have an infinite line of descent. $\T_g$ is called the \emph{backbone}
of $\T^{\mathsf{good}}_d$. 
The random variable $n_v$ has a distribution depending only on $d(v)$ in  $\T^{\mathsf{good}}_d$ and given $\T_g$ and $n_v$ the traps are i.i.d. The supercritical Galton-Watson tree $\T^{\mathsf{good}}_d$ conditioned to die out is equivalent to the $h$-Galton-Watson tree with generating function $h$, which is subcritical. For more details on this decomposition and the equivalence of the processes involved above, see \cite{lyons1992-percolation}
and \cite[Chapter I, Part D]{athreya-ney}.
 
The exposition in this paragraph  is a nontrivial consequence of the key Lemma \ref{lem:key-lemma}(b).
Denote by $t_0<\infty$ the number of times the origin was visited. After the $t_0$-th  visit to the origin, there are no returns to the origin, almost surely, and there has to be a leaf $\gamma_0$ belonging to the range of the rotor walk up to the $t_0$-th return, along which the rotor walker escapes to infinity, that is, there is a live path starting at $\gamma_0$, almost surely. 
Denote by $n_0$ the first time the rotor walk arrives at $\gamma_0$. The tree rooted at $\gamma_0$ was not visited previously by the walker, and the rotors are in their random initial configuration.  The tree of good children $\T^{\mathsf{good}}_d$ rooted at $\gamma_0$ in the initial rotor configuration has the same distribution as the supercritical Galton-Watson tree conditioned on non-extinction.
At time $n_0$, we have already a finite visited subtree and its cardinality $|R_{n_0}|$, which is negligible for the limit behavior of the range. When computing the limit for the size of the range, we have to consider also this irrelevant finite part. 

On the event of non-extinction, let $\gamma=(\gamma_0,\gamma_1,\ldots)$ be the rightmost infinite ray in $\T^{\mathsf{good}}_d$ rooted at $\gamma_0$, that is, the rightmost infinite live path in $\TT_d$, which starts at $\gamma_0$. This is the rightmost ray in the tree $\T_g$. Since all vertices in $\T_g$ have an infinite line of descent, such a ray exists. The ray $\gamma$ is then a live path, along which the rotor walk $(X_n)$ escapes to infinity, without visiting the vertices to the left of $\gamma$; see again Lemma \ref{lem:key-lemma}(b). In order to understand the behavior of the range of $(X_n)$ and to prove a law of large numbers, we introduce the sequence of \emph{regeneration times} $(\tau_k)$ for the ray $\gamma$. Let $\tau_0=n_0$ and for $k\geq 1$:
\begin{equation}\label{eq:tau-trans}
\tau_k=\inf\{n\geq n_0: X_n=\gamma_k\}.
\end{equation}
Note that, for each $k$ the random times $\tau_k$ and $\tau_{k+1}-1$ are the first and the last hitting time of $\gamma_k$, respectively. Indeed, once we are at vertex $\gamma_k$, since $\gamma_{k+1}$ is the rightmost child of $\gamma_k$ with infinite line of descent, the rotor walk visits all good children to the right of $\gamma_k$, and makes finite excursions in the trees rooted at those good children, and then returns to $\gamma_k$ at time $\tau_{k+1}-1$. Then, at time $\tau_{k+1}$, the walk moves to $\gamma_{k+1}$ and never returns to $\gamma_k$.
We first prove the following.
\begin{theorem}\label{thm:transience-reg-tree}
Let $(X_n)$ be a transient rotor walk on $\TT_d$. 
If the tree $\T^{\mathsf{good}}_d$ of good children for the rotor walk $(X_n)$ has extinction probability $q$, then conditioned on non-extinction, there exists a constant $\alpha>0$, which depends only on $q$ and $d$, such that
\begin{equation*}
\lim_{k\to\infty}\frac{|R_{\tau_k}|}{\tau_k}=\alpha,\quad \text{almost surely},
\end{equation*}
and $\alpha$ is given by
\begin{equation*}
\alpha=\dfrac{q-f'(q)(q^2-q+1)}{q^2+q-f'(q)(2q^2-q+1)}.
\end{equation*}
\end{theorem}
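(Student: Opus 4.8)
The plan is to exploit the regeneration times $(\tau_k)$ defined in \eqref{eq:tau-trans} to express both $|R_{\tau_k}|$ and $\tau_k$ as sums of i.i.d.\ increments, and then apply the strong law of large numbers. The key structural fact is that along the rightmost live ray $\gamma = (\gamma_0, \gamma_1, \ldots)$ in the backbone $\T_g$, the behavior of the rotor walk in consecutive blocks $(\tau_k, \tau_{k+1}]$ is i.i.d. Indeed, at time $\tau_k$ the walker first reaches $\gamma_k$ for the very first time, the subtree hanging to the left of (and below) $\gamma_k$ has never been visited, and the rotors there are in their initial random configuration. Thus the increments $\Delta R_k := |R_{\tau_{k+1}}| - |R_{\tau_k}|$ and $\Delta \tau_k := \tau_{k+1} - \tau_k$ are i.i.d.\ across $k$, and $\alpha = \E_{\mathsf{non}}[\Delta R_0] / \E_{\mathsf{non}}[\Delta \tau_0]$ by the law of large numbers, provided both expectations are finite.

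\textbf{Computing the two expectations via the backbone/trap decomposition.}
The heart of the proof will be the exact computation of $\E_{\mathsf{non}}[\Delta R_0]$ and $\E_{\mathsf{non}}[\Delta \tau_0]$. Within one block, the walker at $\gamma_k$ explores all the good children of $\gamma_k$ lying to the right of $\gamma_{k+1}$, makes complete finite excursions into the subtrees rooted at those children, and then descends to $\gamma_{k+1}$. Each such subtree is, by the decomposition in \eqref{eq:g-h-genfunc}, a supercritical $\T^{\mathsf{good}}_d$-tree \emph{conditioned to die out}, which is distributed as the subcritical $h$-Galton--Watson tree with generating function $h(s) = f(qs)/q$. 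For a finite excursion into such a trap, Lemma \ref{lem:key-lemma}(a) tells us the time spent equals $2$ times the number of vertices visited, so the time contribution of each trap is twice its size, and the size of the $h$-tree has mean $h'(1)/(1 - h'(1))$. What remains is to count, using the backbone generating function $g$ and the offspring law governing $\gamma_k$, the expected number of such right-hanging traps per block and to assemble these pieces. I expect the resulting expressions to involve $q$, $f'(q)$ (note $h'(1) = f'(q)$), and combinatorial factors from the ordering of children, and the claimed closed form for $\alpha$ should emerge after simplification.

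\textbf{The main obstacle.}
The principal difficulty is the careful bookkeeping of the conditional offspring structure along the backbone. One must correctly describe the joint law of (i) the number of children of $\gamma_k$ in $\T^{\mathsf{good}}_d$, (ii) which of these lie to the right of the chosen backbone child $\gamma_{k+1}$ (the rightmost one with an infinite line of descent), and (iii) whether each right-hanging child is itself on the backbone or is a trap. This size-biasing, induced by conditioning on non-extinction and by the ``rightmost'' selection rule, is exactly what the decomposition \eqref{eq:g-h-genfunc} is designed to handle, but translating it into the precise expected counts that feed into $\E_{\mathsf{non}}[\Delta R_0]$ and $\E_{\mathsf{non}}[\Delta \tau_0]$ requires care. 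A secondary point is verifying finiteness of $\E_{\mathsf{non}}[\Delta \tau_0]$: this is where transience is essential, since the trap excursions are governed by the \emph{subcritical} tree $h$ and hence have finite expected size, in sharp contrast to the null-recurrent case where $\E[\tau_1] = \infty$. Once both expectations are shown finite and computed, the claimed formula for $\alpha$ follows, and passing from the subsequence $(\tau_k)$ to all times $n$ will be handled separately (as in the recurrent cases).
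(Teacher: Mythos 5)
Your proposal follows essentially the same route as the paper: regeneration times along the rightmost live ray, i.i.d.\ block increments for both the range and the elapsed time (with each trap excursion contributing twice its size to the time, via Lemma \ref{lem:key-lemma}), the strong law of large numbers giving $\alpha$ as a ratio of expectations, and the backbone/trap decomposition in which the right-hanging traps are subcritical $h$-Galton--Watson trees with $h'(1)=f'(q)$. One small correction: the expected total size of a subcritical $h$-Galton--Watson tree is $\frac{1}{1-h'(1)}$ (the root counts), not $\frac{h'(1)}{1-h'(1)}$; with that fixed, the bookkeeping you flag as the main obstacle---computing the expected number of right-hanging traps per block, which the paper evaluates explicitly as $\E_{\mathsf{non}}[N_0]=\frac{q}{1-q}\left(1-f'(q)\right)$ by conditioning on the number of good children of $\gamma_0$ and on non-extinction---combines with $\E_{\mathsf{non}}[|\widetilde{T}_0(1)|]=\frac{q}{q-f'(q)}$ to yield exactly the stated formula.
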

\begin{proof}
We write again $\mathcal{R}_k:=R_{\tau_k}$, and $\R_0$ for the range of the rotor walk up to time $n_0$, which is finite almost surely. 
Since $\gamma$ is the rightmost infinite live path on which $(X_n)$ escapes to infinity, to the right of each vertex $\gamma_k$ in $\T^{\mathsf{good}}_d$ we have a random number of
vertices, and in the tree rooted at those vertices (which are $h$-Galton-Watson trees), the rotor walk makes only finite excursions. Then, at time $\tau_{k+1}$, the walk reaches $\gamma_{k+1}$ and never returns to $\gamma_k$.
 
For each $k$, $\gamma_{k+1}$ is a good child of $\gamma_k$ and the rotor at $\gamma_k$ points to the right of $\gamma_{k+1}$.\\ For $k=0,1,\ldots$ let $\left\{\gamma_k^{(1)},\ldots,\gamma_k^{(N_k)}\right\}$ be the set of vertices which are good children of $\gamma_k$, and are situated to the right of $\gamma_{k+1}$; denote by $N_k$ the cardinality of this set.
Additionally, denote by $\widetilde{T}_k(j)$ the tree rooted at $\gamma_k^{(j)}$, $j=1,\ldots, N_k$
and by $T_k(j)=\widetilde{T}_k(j)\cup (\gamma_k,\gamma_k^{(j)})$. That is, the trees $T_k(j)$, for $j=1,\ldots, N_k$ have all common root $\gamma_k$, and $|T_k(j)|=|\widetilde{T}_k(j)|+1$.

\textbf{Claim 1.} Conditionally on the event of non-extinction,  $(N_k)_{k\geq 0}$ are i.i.d.\\
\emph{Proof of Claim 1. }
For each $k$, the distribution of $N_k$ depends only on the offspring distribution (which is the number of good children for the rotor walk) of $\T_d^{\mathsf{good}}$, and the last one depends only on the initial rotor configuration $\rho$. Since the random variables $(\rho(v))_v$ are i.i.d, the claim follows.

\textbf{Claim 2.} Conditionally on the event of non-extinction, $(|T_k(j)|)_{1\leq j\leq N_k}$ are i.i.d. \\
\emph{Proof of Claim 2. }
This follows immediately from the definition of the Galton-Watson tree, since each vertex in $\T_d^{\mathsf{good}}$ has $k$ children with probability $r_{d-k}$, independently of all other vertices, and all these children have independent Galton-Watson descendant subtrees. All the subtrees $\widetilde{T}_k(j)$ rooted at $\gamma_k^{(j)}$ are then independent h-Galton-Watson subtrees (subcritical), therefore $(|\widetilde{T}_k(j)|)_{1\leq j\leq N_k}$ are i.i.d. Since $|T_k(j)|=|\widetilde{T}_k(j)|+1$, the claim follows.

\textbf{Claim 3.} Given non-extinction, the increments $(\tau_{k+1}-\tau_k)_{k\geq 0}$  are i.i.d..\\
\emph{Proof of Claim 3. }
Given non-extinction, the time $(\tau_{k+1}-\tau_k)_{k\geq 0}$ depends only on $N_k$ (the number of good children to the right of $\gamma_k$) and on $|T_k(j)|$, which by Claim 1 and Claim 2 above, are all i.i.d. Thus the independence of $(\tau_{k+1}-\tau_k)_{k\geq 0}$ follows as well.

Clearly, each $\gamma_k$ is visited exactly $N_{k}+1$ times, and all vertices to the left of $\gamma$ are never visited. We write $\R_{(k-1,k]}$ for the range of the rotor walk in the time interval $(\tau_{k-1},\tau_k]$. For $r\neq s$, the path of the rotor walk  in the time interval $(\tau_{r-1},\tau_r]$ has empty intersection with the path in the time interval $(\tau_{s-1},\tau_s]$, and we have 
\begin{equation*}
|\R_k|=|\R_0|+\sum_{i=1}^k|\R_{(i-1,i]}|,\quad \text{almost surely}.
\end{equation*}
Moreover, for $i=1,\ldots k$,
\begin{equation*}
|\R_{(i-1,i]}|=1+\sum_{j=1}^{N_{i-1}}|\widetilde{T}_{i-1}(j)|,\quad \text{almost surely}
\end{equation*}
and if we denote by $\alpha_k=\sum_{i=0}^{k-1}N_i$ and if $(\tilde{t}_j)$ is an i.i.d. sequence of random variables with the same distribution as $|\widetilde{T}_{i-1}(j)|$, then, using Claim 1 and Claim 2 we obtain
\begin{equation}\label{eq:rk}
|\R_k|=|\R_0|+k+\sum_{j=1}^{\alpha_k}\tilde{t}_j,\quad \text{ almost surely}.
\end{equation}
Similarly, if we write $\tau_k=n_0+\sum_{i=2}^k(\tau_i-\tau_{i-1})$ and use the fact that, for $i\geq 2$ as a consequence of \eqref{eq:tau_k_increments}
\begin{equation*}
\tau_i-\tau_{i-1}=1+2\sum_{j=1}^{N_{i-1}}|\widetilde{T}_{i-1}(j)|, \quad \text{almost surely},
\end{equation*}
then again by Claim 1 and Claim 2 we get
\begin{equation}\label{eq:tau-k}
\tau_k=|\R_0|+k+2\sum_{j=1}^{\alpha_k}\tilde{t}_j.
\end{equation}
Putting equations \eqref{eq:rk} and \eqref{eq:tau-k} together, we finally get
\begin{equation*}
\frac{|\R_k|}{\tau_k}=\dfrac{|\R_0|+k+\sum_{j=1}^{\alpha_k}\tilde{t}_j}{|\R_0|+k+2\sum_{j=1}^{\alpha_k}\tilde{t}_j},\quad \text{almost surely}.
\end{equation*}
By the strong law of large numbers, we have 
\begin{equation*}
\frac{|\R_0|+k}{\alpha_k}\to \frac{1}{\mathbb{E}_{\mathsf{non}}[N_0]},\quad \frac{\sum_{j=1}^{\alpha_k}\tilde{t}_j}{\alpha_k}\to \mathbb{E}_{\mathsf{non}}[|\widetilde{T}_0(1)|]\quad \text{almost surely}
\end{equation*}
which implies
\begin{equation*}
\frac{|\R_k|}{\tau_k}\to \dfrac{1+\mathbb{E}_{\mathsf{non}}[N_0]\mathbb{E}_{\mathsf{non}}[|\widetilde{T}_0(1)|]}{1+2\mathbb{E}_{\mathsf{non}}[N_0]\mathbb{E}_{\mathsf{non}}[|\widetilde{T}_0(1)|]},\quad \text{almost surely}. 
\end{equation*}
We have to compute now the two expectations $\mathbb{E}_{\mathsf{non}}[N_0]$ and $\mathbb{E}_{\mathsf{non}}[|\widetilde{T}_0(1)|]$ involved in the equation above, in order to get the formula from the statement of the theorem.

Conditioned on non-extinction, for all $k=0,1,\ldots$ and $j=1,\ldots N_k$ the trees $\widetilde{T}_k(j)$ are i.i.d. subcritical Galton-Watson trees with generating function $h(s)=\frac{f(qs)}{q}$ and mean offspring number $h'(1)$. That is, they all die out with probability one, and the expected number of vertices is $\mathbb{E}_{\mathsf{non}}[|\widetilde{T}_k(j)|]=\frac{1}{1-h'(1)}$. On the other hand $h(s)=\frac{1}{q}\sum_{j=0}^d r_{d-j}(qs)^j$, which implies that 
$h'(1)=\frac{1}{q^2}\sum_{j=0}^d jr_{d-j}  q^j$, and 
\begin{equation*}
\mathbb{E}_{\mathsf{non}}[\widetilde{T}_0(1)]=\dfrac{q^2}{q^2-\sum_{j=0}^d jr_{d-j}  q^j},
\end{equation*}
which in terms of the generating function $f(s)$ can be written as
\begin{equation}\label{eq:exp-t0}
\mathbb{E}_{\mathsf{non}}[\widetilde{T}_0(1)]=\frac{q}{q-f'(q)}.
\end{equation}
In computing $\mathbb{E}_{\mathsf{non}}[N_0]$:
\begin{align*}
&\mathbb{E}_{\mathsf{non}}[N_0]=\sum_{i=0}^{d-1}i\mathbb{P}_{\mathsf{non}}[N_0=i]=\frac{1}{1-q}\sum_{i=0}^{d-1}i\mathbb{P}[N_0=i, \T_d^{\mathsf{good}} \text{ is infinite}]\\
&=\frac{1}{1-q}\sum_{j=0}^d\sum_{i=0}^{j-1}i \mathbb{P}[N_0=i,\  \T_d^{\mathsf{good}} \text{ is infinite}|\gamma_0 \text{ has j good children}]
 \mathbb{P}[\gamma_0 \text{ has j good children}]\\
 &=\frac{1}{1-q}\sum_{j=0}^{d}r_{d-j}\sum_{i=0}^{j-1}iq^i(1-q)=\sum_{j=0}^{d}r_{d-j}\sum_{i=0}^{j-1}iq^i=\sum_{j=0}^{d}r_{d-j}\frac{(j-1)q^{j+1}-jq^j+q}{(1-q)^2}\\
 &=\frac{q}{(1-q)^2}\left(1-\sum_{j=0}^dr_{d-j}q^j-\frac{1-q}{q}\sum_{j=0}^d jr_{d-j}q^j\right)=\frac{q}{1-q}\left(1-\sum_{j=1}^d jr_{d-j}q^{j-1}\right)
\end{align*}
In terms of the generating function $f(s)$, using $f(q)=q$, $\mathbb{E}_{\mathsf{non}}[N_0]$ can be written as
\begin{equation}\label{eq:exp-n0}
\mathbb{E}_{\mathsf{non}}[N_0]=\frac{q}{1-q}\left(1-f'(q)\right).
\end{equation}
Putting the two expectations together, we obtain the
constant $\alpha$ in terms of the generating function $f(s)$ given by
\begin{equation*}
\alpha=\dfrac{q-f'(q)(q^2-q+1)}{q^2+q-f'(q)(2q^2-q+1)}=\dfrac{q-(q^2-q+1)\sum_{j=1}^d jr_{d-j}q^{j-1}}{q^2+q-(2q^2-q+1)\sum_{j=1}^d jr_{d-j}q^{j-1}}.
\end{equation*}
\end{proof}

In Theorem \ref{thm:transience-reg-tree}, we have proved a law of large numbers for the range of the  rotor walk along a subsequence $(\tau_k)$. With very little effort, we can show that we have indeed a law of large numbers at all times.

\begin{proof}[Proof of Theorem \ref{main-thm-range}(iii)]
For $n\in\mathbb{N}$, the infinite ray $\gamma$ and the regeneration times $(\tau_k)$ as defined in \eqref{eq:tau-trans} let 
\begin{equation*}
k=\max\{j: \tau_j< n\}.
\end{equation*}
Then $\tau_k< n\leq \tau_{k+1}$ a.s. and $|R_{\tau_k}|\leq |R_n| \leq |R_{\tau_{k+1}} |$ a.s. which in turn gives
\begin{equation}\label{eq:rn-stop-times}
\frac{\tau_{k+1}}{\tau_k} \cdot \frac{|R_{\tau_k}|}{\tau_k}\leq \frac{|R_n|}{n}\leq \frac{|R_{\tau_{k+1}}|}{\tau_{k+1}}\cdot \frac{\tau_k}{\tau_{k+1}},\quad \text{almost surely}.
\end{equation}
By Claim 3 from the proof of Theorem \ref{thm:transience-reg-tree}, the increments $(\tau_{k+1}-\tau_k)$ are i.i.d, and finite almost surely, therefore $\frac{\tau_{k+1}-\tau_k}{\tau_k}\to 0$ almost surely as $k\to \infty$. Then, since 
\begin{equation*}
\frac{\tau_{k+1}}{\tau_k}=\frac{\tau_{k+1}-\tau_k}{\tau_k}+1,
\end{equation*}
we have that $\frac{\tau_{k+1}}{\tau_k}\to 1$ almost surely, as $k\to \infty$.
This, together with Theorem \ref{thm:transience-reg-tree} implies that
 the left hand side of equation \eqref{eq:rn-stop-times} converges to $\alpha$ almost surely. By the same argument we obtain that also the right hand side of \eqref{eq:rn-stop-times} converges to the same constant $\alpha$ almost surely, and this completes the proof.
\end{proof}

\subsection{Uniform rotor walks}

We discuss here the behavior of rotor walks on regular trees $\TT_d$, with uniform initial rotor configuration $\rho$, that is, for all $v\in\TT_d$, the random variables $(\rho(v))$
are i.i.d with uniform distribution on the set $\{0,1,\ldots,d\}$, i.e. $\mathbb{P}[\rho(v)=j]=\frac{1}{d+1}$, for $j\in\{0,1,\ldots,d\}$. Such walks are null recurrent 
on $\TT_2$ and
transient on all $\TT_d$, $d\geq 3$. As a special case of Theorem \ref{main-thm-range}(iii),
 we have the following.
\begin{corollary}
If $(X_n)$ is a uniform rotor walk on $\TT_d$, $d\geq 3$, then the constant $\alpha$ is given by
\begin{equation*}
\alpha=\dfrac{q(1-d)(1-q^{d+1})}{(d+1)(q-1)^3}+\dfrac{q^2(1-q^{d-1})}{(q-1)^3}.
\end{equation*}
\end{corollary}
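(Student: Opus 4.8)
The corollary is a pure specialization of Theorem \ref{main-thm-range}(iii), so there is no new probabilistic content: the plan is to insert the uniform data into the generating-function quantities appearing in the transient formula and simplify. First I would record the data of the uniform walk. Since $\E[\rho(v)]=\frac{1}{d+1}\sum_{j=0}^d j=\frac{d}{2}$, the tree of good children $\T^{\mathsf{good}}_d$ has mean offspring $m=d-\frac{d}{2}=\frac{d}{2}$, which exceeds $1$ precisely when $d\geq 3$; this explains the hypothesis of the corollary, the excluded case $d=2$ being null recurrent with $\alpha=\frac12$ by Theorem \ref{main-thm-range}(ii). The generating function becomes the normalized finite geometric sum $f(s)=\frac{1}{d+1}\sum_{j=0}^d s^j=\frac{1}{d+1}\cdot\frac{s^{d+1}-1}{s-1}$, and $q\in(0,1)$ is the unique root of $f(q)=q$ in that interval, equivalently of $q^{d+1}-(d+1)q^2+(d+1)q-1=0$.

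Next I would compute $f'(q)$ in closed form by differentiating the finite geometric series, namely $f'(s)=\frac{1}{d+1}\sum_{j=1}^d j s^{j-1}=\frac{1}{d+1}\cdot\frac{d s^{d+1}-(d+1)s^d+1}{(s-1)^2}$, which gives a clean rational expression in $q$ and $d$ with denominator $(q-1)^2$. Substituting this $f'(q)$ together with $q$ into the two expectations $\mathbb{E}_{\mathsf{non}}[\widetilde{T}_0(1)]=\frac{q}{q-f'(q)}$ from \eqref{eq:exp-t0} and $\mathbb{E}_{\mathsf{non}}[N_0]=\frac{q}{1-q}\bigl(1-f'(q)\bigr)$ from \eqref{eq:exp-n0}, and then into the formula for $\alpha$ in Theorem \ref{main-thm-range}(iii), yields the claim. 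The extra $(q-1)$ in the denominator of \eqref{eq:exp-n0}, multiplied by the $(q-1)^2$ coming from $f'(q)$, is exactly what produces the $(q-1)^3$ visible in the stated answer, and the factor $(d+1)$ in the first term tracks the normalization in $f$.

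I expect the cleanest route to the two-term form displayed in the statement is not to carry the rational $f'(q)$ but to evaluate $\mathbb{E}_{\mathsf{non}}[N_0]$ directly through the double sum $\frac{1}{d+1}\sum_{j=0}^d\sum_{i=0}^{j-1} i q^{i}$, using the closed form $\sum_{i=0}^{j-1} i q^{i}=\frac{(j-1)q^{j+1}-j q^{j}+q}{(q-1)^2}$ already derived in the proof of Theorem \ref{thm:transience-reg-tree}; summing over $j$ and separating the arithmetic-geometric pieces naturally splits the result into the two displayed terms. The only genuine obstacle is the algebraic bookkeeping: keeping the stray powers $q^{d}$ and $q^{d+1}$ organized while reindexing the nested sums. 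Here the fixed-point relation $f(q)=q$, i.e.\ $\sum_{j=0}^d q^{j}=(d+1)q$, is the key simplifying identity, since it collapses the leftover geometric sums and removes the redundant high powers, leaving precisely the compact expression in the statement. No input beyond Theorem \ref{main-thm-range}(iii) and elementary summation is required.
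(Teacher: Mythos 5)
Your proposal matches the paper's proof, which obtains the corollary by exactly the same one-line specialization: substituting $r_{d-j}=\frac{1}{d+1}$ and the closed form $f'(q)=\frac{q^d}{q-1}-\frac{q^{d+1}-1}{(d+1)(q-1)^2}$ (identical to your geometric-series expression for $f'$) into Theorem \ref{thm:transience-reg-tree}, i.e.\ Theorem \ref{main-thm-range}(iii). Your additional remarks --- computing $m=d/2$ to justify the hypothesis $d\geq 3$, and organizing the algebra through the double sum for $\mathbb{E}_{\mathsf{non}}[N_0]$ --- are consistent elaborations of that same substitution rather than a different route.
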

\begin{proof}
Using that $\mathbb{P}[\rho(v)=j]=\frac{1}{d+1}=r_{d-j}$ and putting $f'(q)=\frac{q^d}{q-1}-\frac{q^{d+1}-1}{(d+1)(q-1)^2}$ in  Theorem \ref{thm:transience-reg-tree} we get the result.
\end{proof}

The following table shows values for the constants $\alpha$ in comparison with the limit $(d-1)/d$ for the simple random walk on trees.

\begin{center}
\begin{tabular}{r|cc}
$d$ & $\alpha$ & $(d-1)/d$ \\
\hline
  2 & 0.500 & 0.500\\
  3 & 0.707 & 0.666\\
  4 & 0.784 & 0.750\\
  5 & 0.825 & 0.800\\
  6 & 0.853 & 0.833\\
  7 & 0.872 & 0.857\\
  8 & 0.888 & 0.875\\
  9 & 0.899 & 0.888\\
 10 & 0.909 & 0.900
\end{tabular}
\end{center}
Note that only in the case of the binary tree $\TT_2$, the limit values for the range of the uniform rotor walk and of the simple random walk are equal, even though the uniform rotor walk is null recurrent and the simple random walk is transient. In the transient case, that is, for all $d\geq 3$ we always have $\alpha > (d-1)/d$.

\section{Speed on regular trees}\label{sec:rate-escape}

In this section, we prove the existence of the almost sure limit $\frac{|X_n|}{n}$, as $n\to \infty$, where $|X_n|$ represents the distance from the root to the position $X_n$ at time $n$ of the walker. 

\subsection{Recurrent rotor walks}

\begin{proof}[Proof of Theorem \ref{main-thm-escape}(i)]
We show that in this case, $l=\lim_{n\to\infty}\frac{|X_n|}{n}=0$, almost surely.  For arbitrary $n$,  let
 \begin{equation*}
k=\max\{ i: \tau_i< n\},
\end{equation*}
which implies that $\tau_k< n\leq \tau_{k+1}$, and up to time $n$ we have $k$ returns to the root.
Let $D_k$ be the maximum distance from the root reached after $k$ returns to the root.

\textit{Positive recurrent rotor walks.} We have 
\begin{equation*}
0\leq \frac{|X_n|}{n}\leq \frac{D_{k+1}}{\tau_k}=\frac{D_{k+1}}{k+1}\cdot \frac{k+1}{\tau_k}.
\end{equation*}
In view of  \cite[Theorem 7(ii)]{angel_holroyd}, the maximal depths grows linearly with the number of returns to the root, that is, $\frac{D_{k+1}}{k+1}$ is almost surely bounded. On the other hand, $\tau_k$ grows exponentially in $k$, therefore $\frac{k+1}{\tau_k}$ converges almost surely to $0$ as $k\to\infty$, that is $l=\lim_{n\to\infty}\frac{|X_n|}{n}=0$ almost surely.

\textit{Null recurrent rotor walks.} In the null recurrent case, the situation is a bit different, since even though all particles return to the root, they reach very great depths; see again \cite[Theorem 7(i)]{angel_holroyd}. Write again $ \mathcal{R}_k:=R_{\tau_k}$. For the position of the rotor walk $X_n$, we distinguish the following three cases:
\begin{enumerate}[(i)]
\item If $X_n\in \mathcal{R}_{k-1}$, then 
$$0\leq \frac{|X_n|}{n}\leq \frac{| \mathcal{R}_{k-1}|}{\tau_k}=\frac{| \mathcal{R}_{k-1}|}{\tau_{k-1}}\cdot\frac{\tau_{k-1}}{\tau_k},$$
and the right hand side above converges to $0$ almost surely in view of Theorem \ref{thm:range-null-rec} together with the fact $\frac{\tau_{k-1}}{\tau_k}\to 0$ almost surely, proven again in Theorem \ref{thm:range-null-rec}. Therefore $\lim_{n\to\infty}\frac{|X_n|}{n}=0$ almost surely.
\item If  $X_n\in \mathcal{R}_{k}\setminus \mathcal{R}_{k-1}$, then there exists $i\in \{1,2,\ldots,L_{k-1}\}$, where $L_{k-1}=|\partial_0\mathcal{R}_{k-1}|$, such that $X_n$ is in the tree rooted at $x_i$, and
$$0\leq \frac{|X_n|}{n}\leq \frac{| \mathcal{R}_{k-1}|}{\tau_k}+\frac{\tau_1^i}{\tau_k},$$
where $\tau_1^i$ is a random variable, independent and identically distributed to $\tau_1$ which is finite almost surely.
The quantity $\frac{| \mathcal{R}_{k-1}|}{\tau_k}$ converges to $0$
by the same argument as in case (i), whereas $\frac{\tau_1^i}{\tau_k}$
converges also to $0$  almost surely, as $k=k(n)\to\infty.$  
\item Finally, if $X_n\in \mathcal{R}_{k+1}\setminus \mathcal{R}_{k}$, then there exist $i\in \{1,2,\ldots,L_{k-1}\}$ and $j\in \{1,2,\ldots,L_{k}\}$
such that 
$$0\leq \frac{|X_n|}{n}\leq \frac{| \mathcal{R}_{k-1}|}{\tau_k}+\frac{\tau_1^i}{\tau_k}+\frac{\tau_1^j}{\tau_k},$$
where both $\tau_1^i,\tau_1^j$ are i.i.d random variables,  identically distributed as $\tau_1$ which is finite almost surely, and the right hand side converges again to $0$ almost surely, as $k=k(n)\to \infty$, and this proves the claim.
\end{enumerate}
\vspace{-1cm}
\end{proof}

\subsection{Transient rotor walks}

Since in the transient case there is a positive probability of extinction of $\T_{d}^{\mathsf{good}}$, we condition on the event of non-extinction. The notation remains the same as in Section \ref{sec:trans-range}. 

\begin{proof}[Proof of Theorem \ref{main-thm-escape}(ii)]
Recall the definition of the infinite ray $\gamma$ along which the rotor walk $(X_n)$ escapes to infinity, and the regeneration times $\tau_k$ as defined in \eqref{eq:tau-trans}. We first prove the existence of the speed $l$ along the sequence $(\tau_k)$. This is rather easy, since $d(r,\gamma_k)=|X_{n_0}|+k$, a.s. where $n_0<\infty$ is the first time the walk reaches $\gamma_0$, from where it escapes without returning to the root; see again Lemma \ref{lem:key-lemma}(b).

 Recall now from the proof of Theorem \ref{thm:transience-reg-tree}, that $\tau_k=n_0+k+2\sum_{j=1}^{\alpha_k}\tilde{t}_j$ a.s. and $\alpha_k=\sum_{i=0}^{k-1}N_i$ a.s., with the involved quantities again as computed in  Theorem \ref{thm:transience-reg-tree}.
Then
\begin{equation*}
\frac{|X_{\tau_k}|}{\tau_k}=\frac{|X_{n_0}|+k}{\tau_k}=\frac{1+\frac{|X_{n_0}|}{k}}{1+\frac{n_0}{k}+\frac{2\sum_{j=1}^{\alpha_k}\tilde{t}_j}{\alpha_k}\cdot\frac{\alpha_k}{k}}.
\end{equation*}
By the strong law of large numbers for sums of i.i.d random variables, we have 
$\frac{2\sum_{j=1}^{\alpha_k}\tilde{t}_j}{\alpha_k}\to \mathbb{E}_{\mathsf{non}}[|\widetilde{T}_0(1)|]$ and $\frac{\alpha_k}{k}\to\mathbb{E}_{\mathsf{non}}[N_0]$ almost surely, as $k\to\infty$, Also, since $|X_{n_0}|$ and $n_0$ are both finite, the almost sure limits $\frac{|X_{n_0}|}{k}$
and $\frac{n_0}{k}$ are $0$, as $k\to\infty$. This implies 
\begin{equation*}
\frac{|X_{\tau_k}|}{\tau_k}\to \frac{1}{1+2\mathbb{E}_{\mathsf{non}}[|\widetilde{T}_0(1)|]\mathbb{E}_{\mathsf{non}}[N_0]},\quad \text{almost surely, as } k\to\infty.
\end{equation*}
By equation \eqref{eq:exp-t0} and \eqref{eq:exp-n0}, which give $\mathbb{E}_{\mathsf{non}}[|\widetilde{T}_0(1)|]$ and $\mathbb{E}_{\mathsf{non}}[N_0]$ in terms of the generating function $f$ of the tree $\T^{\mathsf{good}}_d$, we obtain
\begin{equation}\label{eq:escape-tauk}
\frac{|X_{\tau_k}|}{\tau_k}\to \frac{(q-f'(q))(1-q)}{q+q^2-f'(q)(2q^2-q+1)}=l,\quad \text{ almost surely as }k\to\infty.  
\end{equation}
In order to prove the almost sure convergence of $\frac{|X_n|}{n}$, for all $n$, we take 
\begin{equation*}
k=\max\{ i: \tau_i< n\}.
\end{equation*}
We know that $\tau_k< n\leq \tau_{k+1}$ a.s. $|X_{\tau_k}|=|X_{n_0}|+k$, $|X_{\tau_{k+1}}|=|X_{n_0}|+k+1$ a.s.
and $|X_n|\geq |X_{n_0}|+k$ a.s. Moreover, between times $\tau_k$ and $\tau_{k+1}$, the distance can increase with no more than $\tau_{k+1}-\tau_k$, and we have
\begin{equation*}
\frac{|X_{n_0}|+k}{\tau_{k+1}}\leq \frac{|X_n|}{n}\leq \frac{|X_{n_0}|+k+(\tau_{k+1}-\tau_k)}{\tau_k},\quad \text{almost surely}.
\end{equation*}
Since $\frac{|X_{\tau_k}|}{\tau_k}\cdot \frac{\tau_k}{\tau_{k+1}}=\frac{k}{\tau_{k+1}}$, together with equation \eqref{eq:escape-tauk} and the facts that $\frac{\tau_{k}}{\tau_{k+1}}\to 1$ and $\frac{|X_{n_0}|}{\tau_{k+1}}\to 0$ almost surely, we obtain that the left hand side of the equation above converges to $l$ almost surely, as $k\to \infty$. For the right hand side, we use again equation \eqref{eq:escape-tauk}, together with the fact that $\frac{\tau_{k+1}-\tau_k}{\tau_k}\to 0$ almost surely, since the increments $(\tau_{k+1}-\tau_k)$ are i.i.d and almost surely finite. This yields the almost sure convergence of the right hand side of the equation above to the constant $l$, which implies that $\frac{|X_n|}{n}\to l$ almost surely as $n\to \infty$.
\end{proof}

\section{Rotor walks on Galton-Watson trees}
\label{sec:rrw-galton}

The methods we have used in proving the law of lange numbers and the existence of the rate of escape for rotor walks $(X_n)$ with random initial configuration on regular trees can be, with minor modifications, adapted to the case when the rotor walk $(X_n)$ moves initially on a Galton-Watson tree $\T$. We get very similar results to the ones on regular trees, which we will state below. We will not write down the proofs again, but only mention the differences which appear on Galton-Watson trees.

Let $\T$ be a Galton-Watson tree 
with offspring distribution $\xi$ given by $p_k=\mathbb{P}[\xi=k]$, for $k\geq 0$ and we assume that $p_0=0$, that is  $\T$ is supercritical and survives with probability $1$. Moreover
the mean offspring number $\mu=\E[\xi]$ is also greater than $1$.
We recall the notation and the main result from \cite{huss_muller_sava_gw}.
For each $k\geq 0$ we choose a probability
distribution $\mathcal{Q}_k$ supported on $\{0,\ldots,k\}$. That is, we have the sequence of distributions $(\mathcal{Q}_k)_{k\in\N_0}$, where
\begin{equation*}
\mathcal{Q}_k = \big(q_{k,j}\big)_{0\leq j\leq k}
\end{equation*}
with $q_{k,j}\geq 0$ and $\sum_{j=0}^k q_{k,j} = 1$. Let $\mathcal{Q}$ be the infinite lower
triangular matrix having $\mathcal{Q}_k$ as row vectors, i.e.:
\begin{equation*}
\mathcal{Q} = 
\begin{pmatrix}
q_{00} & 0      & 0      & 0      & \hdots \\
q_{10} & q_{11} & 0      & 0      & \hdots \\
q_{20} & q_{21} & q_{22} & 0      & \hdots \\
q_{30} & q_{31} & q_{32} & q_{33} & \hdots \\
\vdots & \vdots & \vdots & \vdots & \ddots \\
\end{pmatrix}.
\end{equation*}
Below, we write $\d_x$ for the (random) degree of vertex $x$ in $\T$.
\begin{definition}
A random rotor configuration $\rho$ on  $\T$ is $\mathcal{Q}$-distributed,
if  for each $x\in \T$, the rotor $\rho(x)$ is a random variable with the following properties:
\begin{enumerate}
 \setlength\itemsep{0em}
\item $\rho(x)$ is $\mathcal{Q}_{\mathsf{d}_x}$ distributed, i.e.,
$\Pb[\rho(x) = \d_x - l \,|\, \d_x = k] = q_{k,l}$, with $l=0,\ldots \d_x$,
\item $\rho(x)$ and $\rho(y)$ are independent if $x\not=y$, with $x,y\in \T$.
\end{enumerate}
We write $\mathsf{R}_{\T}$ for the corresponding probability measure.
\end{definition}
Then $\rgw=\rc_{\T}\times \gw$  represents 
the probability measure given by choosing a tree $\T$ according to the $\gw$ measure, and then independently
choosing a rotor configuration $\rho$ on $\T$ according to $\rc_{\T}$. 
Recall that to the root $r\in\T$, we have added an additional sink vertex $s$.
If we start with $n$ rotor particles, one after another, at the root $r$ of $\T$, with random initial configuration $\rho$, and we denote by $E_n(\T,\rho)$ the number of particles out of $n$  that escape to infinity, then the main result of \cite{huss_muller_sava_gw} is the following.

\begin{theorem}
\label{main_thm}\cite[Theorem 3.2]{huss_muller_sava_gw}
Let $\rho$ be a random $\mathcal{Q}$-distributed rotor configuration on a Galton-Watson tree $\T$ with offspring distribution $\xi$, and let
$\nu = \xi\cdot\mathcal{Q}$. Then we have  for $\rgw$-almost all~$\T$ and $\rho$:
\begin{enumerate}[(a)]
 \setlength\itemsep{0em}
\item \label{main_thm_a}$\displaystyle E_n(\T,\rho) = 0$ for all $n\geq 1$, if $\E[\nu] \leq 1$,
\item \label{main_thm_b}$\displaystyle\lim_{n\to\infty} \frac{E_n(\T,\rho)}{n} = \gamma(\mathcal{T})$, if $\E[\nu] > 1$,
\end{enumerate}
where $\gamma(\T)$ represents the probability that simple random walk started at the root  of $\T$  never returns  to $s$. 
\end{theorem}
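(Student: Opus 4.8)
The plan is to analyse both regimes through the Galton--Watson tree $\T^{\mathsf{good}}$ of good children, whose offspring law is exactly $\nu=\xi\cdot\mathcal{Q}$, and to use throughout the abelian property of rotor walks, which makes $E_n(\T,\rho)$ independent of the order in which the $n$ particles are routed. For part (a), I would argue exactly as in the recurrent regular-tree case. When $\E[\nu]\le 1$ the tree $\T^{\mathsf{good}}$ is a (sub)critical Galton--Watson tree, so the component of the sink in $\T^{\mathsf{good}}$ is finite $\rgw$-almost surely by Theorem \ref{thm:GW-survival}. By the Galton--Watson analogue of Lemma \ref{lem:key-lemma}(a), a finite sink-component forces every excursion to return to the sink, so no particle ever reaches a live path to infinity; hence $E_n(\T,\rho)=0$ for all $n\ge 1$. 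This is the easy direction.

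For part (b) I would set up a self-similar escape recursion. Fix a vertex $v$ with $k$ children $c_1,\ldots,c_k$ and one parent, so $\deg(v)=k+1$, and let $g_v(n)$ be the number of $n$ particles fed into $v$ from its parent that escape to infinity, the remaining $n-g_v(n)$ eventually leaving back through the parent. The crucial rotor mechanism is near-equidistribution: since the rotor at $v$ cycles through all $k+1$ neighbours, if $m_v$ particles are routed out of $v$ in total then each neighbour receives $m_v/(k+1)+O(1)$. A flow balance at $v$ (feed from the parent plus returns from the children equals total exits) gives $m_v/(k+1)=n-g_v(n)+O(1)$, so each child is itself fed $n-g_v(n)+O(1)$ particles, and $g_v(n)=\sum_{i=1}^{k} g_{c_i}\!\big(n-g_v(n)\big)+O(1)$. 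Writing $\beta_v:=\lim_n g_v(n)/n$ and passing to the limit yields $\beta_v=(1-\beta_v)\sum_{i=1}^k\beta_{c_i}$, equivalently
\[
\beta_v=\frac{\sum_{i=1}^k\beta_{c_i}}{1+\sum_{i=1}^k\beta_{c_i}}.
\]

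The next step is to recognise this as the simple-random-walk recursion. Let $e_v:=\Pb[\text{SRW started at }v\text{ never hits the parent of }v]$. A first-step-and-return decomposition (a step to the parent is an immediate failure, a step into $c_i$ either escapes with probability $e_{c_i}$ or returns to $v$) gives precisely $e_v=\big(\sum_i e_{c_i}\big)\big/\big(1+\sum_i e_{c_i}\big)$. With the convention that the parent of the root is the sink $s$, we have $e_r=\gamma(\T)$ and $E_n=g_r(n)$. Since $(\beta_v)$ and $(e_v)$ solve the same recursion with the same behaviour at infinity, they coincide, so $\beta_r=\gamma(\T)$ and $E_n/n\to\gamma(\T)$.

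The hard part will be making this rigorous on an infinite random tree, and this is where I expect the main obstacle to lie. First, one must prove that $\lim_n g_v(n)/n$ actually exists and that the accumulated $O(1)$ rotor discrepancies, summed over the visited vertices, contribute only $o(n)$ to $E_n$; the naive per-vertex bound (in the spirit of Holroyd--Propp) only gives an error of order the number of visited vertices, which is itself $O(n)$, so a sharper, boundary-type estimate exploiting transience is needed. Second, the recursion has infinite depth, so I would justify it by truncating $\T$ at depth $L$, solving exactly on the finite truncation where it matches finite-tree simple-random-walk escape probabilities, and then letting $L\to\infty$, using transience of $\T$ (guaranteed on the event $\E[\nu]>1$) to pass to the limit, together with a Borel--Cantelli or ergodic argument over the Galton--Watson measure to upgrade the convergence to the $\rgw$-almost-sure statement.
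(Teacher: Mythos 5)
A preliminary but important point: the paper does not prove this statement. It is Theorem 3.2 of \cite{huss_muller_sava_gw}, quoted verbatim and used as a black box in Section \ref{sec:rrw-galton}, so there is no in-paper proof to compare your attempt against; I can only assess the proposal on its own terms. On those terms, part (a) is the right idea but incomplete as written: finiteness of the sink's component in $\T^{\mathsf{good}}$ only guarantees that the \emph{first} particle returns, whereas $E_n(\T,\rho)=0$ requires all $n$ particles to return, and the rotor configuration changes after every excursion. The missing (standard) step is that after an excursion all rotors in the visited set point to the parent, so every child of a visited vertex is good for the next particle, while the rotors outside the visited set are still in their i.i.d.\ initial state; hence the next particle's tree of good children is the previous finite range with fresh independent copies of the (sub)critical tree attached along its outer boundary, which is again finite almost surely, and one concludes by induction on the particle index.

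Part (b) is a plausible skeleton whose flow-balance recursion and its identification with the simple-random-walk recursion $e_v=\bigl(\sum_i e_{c_i}\bigr)/\bigl(1+\sum_i e_{c_i}\bigr)$ are both correct, but three genuine gaps remain, and your own sketch of how to close them contains an error. (i) The existence of the limits $\beta_v=\lim_n g_v(n)/n$ is assumed, not proved. (ii) The recursion does not determine its solution: $\beta_v\equiv 0$ satisfies it on every tree, so ``$(\beta_v)$ and $(e_v)$ solve the same recursion, hence coincide'' proves nothing without an actual uniqueness or boundary argument; ``same behaviour at infinity'' is a placeholder, not such an argument. (iii) The error control: your repair via truncation at depth $L$, ``where it matches finite-tree simple-random-walk escape probabilities,'' is not correct as stated, because even on a finite tree the rotor-router splitting of $n$ particles between two sinks matches harmonic measure only up to a Holroyd--Propp discrepancy that is bounded in $n$ but grows with the size of the truncation; the limits $n\to\infty$ and $L\to\infty$ must therefore be interchanged with care. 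The truncation cheaply yields the upper bound $\limsup_n E_n(\T,\rho)/n\le\gamma(\T)$ (an escaping particle must reach depth $L$), but the substance of the theorem is the matching lower bound --- that almost every particle reaching depth $L$ actually escapes --- which requires exploiting the fresh i.i.d.\ configuration below level $L$ and the supercriticality of $\T^{\mathsf{good}}$, and this is precisely the part your proposal names but does not supply.
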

Let us denote $m:=\E[\nu]$. That is, if $m\leq 1$, then $(X_n)$ is recurrent and if $m>1$, then $(X_n)$ is transient.

\subsection{Range and speed on Galton-Watson trees}

\paragraph{Tree of good children.}
If $\rho$ is $\mathcal{Q}$-distributed,
\begin{equation*}
\Pb[x \text{ has $l$ good children} \,|\, \d_x = k] = q_{k,l},
\end{equation*}
then the distribution of the number of good children of a vertex $x$ in $\T$ is given by
\begin{equation}\label{eq:offspring-gw}
\Pb[x \text{ has $l$ good children}] = \sum_{k = l}^{\infty} p_k q_{k,l}=:r_l,
\end{equation}
which is the $l^{\text{th}}$ component of the vector $\nu = \xi\cdot\mathcal{Q}$.
The tree of good children $\T^{\mathsf{good}}$ is in this case a Galton-Watson tree  with offspring distribution
$\nu= \xi\cdot\mathcal{Q}$ whose mean was denoted by $m$. Denote by $f_{\T}$ the generating function of $\T^{\mathsf{good}}$ and by $q$ its extinction probability.
For the range $R^{\T}_n$ of rotor walks $(X_n)$ on Galton-Watson trees, we get the following result. 

\begin{theorem}\label{main-thm-range-GW}
Let $\T$ be a Galton-Watson tree with offspring distribution $\xi$ and mean offspring number $\E[\xi]=\mu>1$.
If $(X_n)$ is a rotor walk with random $\mathcal{Q}$-distributed  initial configuration on $\T$, and $\nu=\xi\cdot\mathcal{Q}$, then there is a constant $\alpha_{\T}>0$ such that
\begin{equation*}
\lim_{n\to\infty}\frac{|R^{\T}_n|}{n}=\alpha_{\T},\quad \rgw \text{ -almost surely}.
\end{equation*}
If we write $m=\E[\nu]$, then the constant $\alpha_{\T}$ is given by:
\begin{enumerate}[(i)]
 \setlength\itemsep{-1em}
\item If $(X_n)_{n\in\N}$ is positive recurrent, then
\begin{equation*}
\alpha_{\T}=\frac{\mu-1}{2(\mu-m)}.
\end{equation*}
\item If $(X_n)_{n\in\N}$ is null recurrent, then 
\begin{equation*}
\alpha_{\T}=\frac{1}{2}.
\end{equation*}
\item If $(X_n)_{n\in\N}$ is transient, then conditioned on the non-extinction of $\T^{\mathsf{good}}$,
\begin{equation*}
\alpha_{\T}=\frac{q-f_{\T}'(q)(q^2-q+1)}{q^2+q-f_{\T}'(q)(2q^2-q+1)} 
\end{equation*}
where $q>0$ is the extinction probability of $\T^{\mathsf{good}}_d$.
\end{enumerate}
\end{theorem}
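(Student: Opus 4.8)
The plan is to re-run, $\rgw$-almost surely, the three-case analysis of Theorems~\ref{thm:range_lln_homtree}, \ref{thm:range-null-rec} and~\ref{thm:transience-reg-tree} (together with their all-time extensions) on the random tree $\T$, and to isolate the handful of places where regularity of $\TT_d$ entered. Three ingredients carry over to any tree and require no change: the excursion identity $\tau_k-\tau_{k-1}=2|R_{\tau_k}|$ (which only uses that a depth-first search crosses each edge twice), Lemma~\ref{lem:key-lemma}, and the recurrence/transience dichotomy, now governed by $m=\E[\nu]$ through Theorem~\ref{main_thm}: the walk is positive recurrent for $m<1$, null recurrent for $m=1$, and transient for $m>1$. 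The only structural change is that the deterministic leaf count~\eqref{eq:number_of_leaves} becomes the random identity $|\partial_o V|=1+\sum_{v\in V}(\d_v-1)$, so $|\partial_o V|$ is no longer a fixed function of $|V|$.

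In the positive recurrent case $m<1$, the tree of good children $\T^{\mathsf{good}}=\R_1$ is subcritical with $\E[|\R_1|]=\tfrac{1}{1-m}$, and $\R_{k+1}$ is built from $\R_k$ by grafting an independent copy of $\R_1$ at each of the $L_k=|\partial_o\R_k|$ boundary vertices. Hence $(L_k)$ is again a genuine supercritical Galton-Watson process, with offspring law that of $L_1=|\partial_o\R_1|$ and mean $\E[L_1]=1+(\mu-1)\E[|\R_1|]=1+\tfrac{\mu-1}{1-m}$; the value $\mu-1$ here comes from a many-to-one computation using that, for $v\in\R_1$, the child-count $\d_v$ is independent of the event that $v$ is a good child. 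First I would invoke Seneta-Heyde to get $c_k$ and $W$ with $L_k/c_k\to W$ and $c_{k+1}/c_k\to\E[L_1]$, exactly as before. The one noncosmetic modification is that $|\R_k|=(L_k-1)/(d-1)$ is no longer exact; I would replace it by the almost sure limit $|\partial_o\R_k|/|\R_k|\to\mu-1$, obtained from a strong law of large numbers for the additive functional $\sum_{v\in\R_k}(\d_v-1)$ over the i.i.d.\ excursion blocks that compose $\R_k$. Substituting into $\tau_k-\tau_{k-1}=2|\R_k|$ and dividing by $c_k$ then runs the Seneta-Heyde argument through to $\lim_k|\R_k|/\tau_k=\tfrac12\bigl(1-\E[L_1]^{-1}\bigr)=\tfrac{\mu-1}{2(\mu-m)}$.

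The null recurrent case $m=1$ reproduces the self-referential $\liminf$ argument of Theorem~\ref{thm:range-null-rec}: partition $(\tau_k,\tau_{k+1}]$ into the $L_k$ i.i.d.\ leaf-excursions, use the strong law $\tfrac{1}{L_{k-1}}\sum_i(\theta_{k-1}^i-\eta_{k-1}^i)\to\E[\tau_1]=\infty$, and bound $\tau_k/\tau_{k-1}$ from below via~\eqref{eq:tau_k_increments}. The single substitution is the inequality $L_{k-1}\ge|\R_{k-1}|$ taken from~\eqref{eq:number_of_leaves}; on $\T$ I would instead use $|\partial_o\R_{k-1}|/|\R_{k-1}|\to\mu-1>0$ (the same law of large numbers), so the lower bound still diverges, forcing $\tau_{k-1}/\tau_k\to0$ and hence $\lim_k|\R_k|/\tau_k=\tfrac12$. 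For the transient case $m>1$ I would condition on non-extinction of $\T^{\mathsf{good}}$ and copy the regeneration-time analysis of Theorem~\ref{thm:transience-reg-tree} along the rightmost ray $\gamma$: here essentially nothing changes, because the backbone/trap decomposition, Claims~1--3, and the two expectations $\E_{\mathsf{non}}[|\widetilde{T}_0(1)|]=\tfrac{q}{q-f_{\T}'(q)}$ and $\E_{\mathsf{non}}[N_0]=\tfrac{q}{1-q}(1-f_{\T}'(q))$ are intrinsic to $\T^{\mathsf{good}}$, which is a Galton-Watson tree with generating function $f_{\T}$ and extinction probability $q$. Every computation depends on the ambient tree only through $f_{\T}$ and $q$, the lone cosmetic change being that sums over good children now run to $\infty$ rather than to $d$ but collapse using $f_{\T}(q)=q$; this yields the stated $\alpha_{\T}$. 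In all three cases I would then pass from the subsequence $(\tau_k)$ to all $n$ by the arguments already in place: for $m\le1$ the coupling across the (now random) children of the root and across level-$j$ vertices, using that the number of level-$j$ vertices tends to infinity $\rgw$-almost surely so that $\max_i W_i/\sum_i W_i\to0$; for $m>1$ the sandwich~\eqref{eq:rn-stop-times} together with $\tau_{k+1}/\tau_k\to1$.

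The main obstacle is precisely the loss of the deterministic identity~\eqref{eq:number_of_leaves}: everything reduces to the almost sure relation $|\partial_o\R_k|/|\R_k|\to\mu-1$. The delicate point is that the marks $\d_v$ are \emph{not} independent of the good-children structure of $\R_k$, since $\d_v$ size-biases the offspring $\nu$ at $v$ through~\eqref{eq:offspring-gw}. The clean way around this is to view $\R_k$ as a concatenation of i.i.d.\ excursion blocks distributed as the pair $\bigl(|\R_1|,\sum_{v\in\R_1}(\d_v-1)\bigr)$ and to apply the strong law of large numbers to these blocks, whose number grows to infinity because $\Pb[L_k=0]=0$.
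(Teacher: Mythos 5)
Your proposal follows essentially the same route as the paper: rerun the three regular-tree arguments, observe that the only structural change is that the deterministic leaf count \eqref{eq:number_of_leaves} becomes the random identity $L_k = 1+\sum_{v\in\R_k}(\xi_v-1)$, compute $\E[L_1]=1+(\mu-1)\E[|\R_1|]=\tfrac{\mu-m}{1-m}$ by a Wald-type identity (the paper calls this $\lambda$ and substitutes it for $\nu$), and note that the transient computation is intrinsic to $\T^{\mathsf{good}}$, hence depends only on $f_{\T}$ and $q$ and is unchanged. You are in fact more explicit than the paper about the one genuinely new difficulty, namely that $|\R_k|$ is no longer a deterministic function of $L_k$, which you repair via the almost sure ratio limit $|\partial_o\R_k|/|\R_k|\to\mu-1$.

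Two small points on the write-up, neither of which changes the verdict. First, in the null recurrent case $m=1$ the excursion blocks $\bigl(|\R_1|,\sum_{v\in\R_1}(\d_v-1)\bigr)$ have infinite mean in both coordinates, so the ratio law of large numbers over i.i.d.\ blocks that you invoke does not literally apply there; the limit $|\partial_o\R_k|/|\R_k|\to\mu-1$ is still correct, but is better justified by the strong law along the exploration order of the range (the child-counts revealed when vertices are first added to $\R_k$ form an i.i.d.\ sequence distributed as $\xi$, precisely because of the independence from the good-child events that you isolate at the end of your proposal). Second, for the passage from $(\tau_k)$ to all $n$ when $m=1$, the Seneta--Heyde coupling used in the proof of Theorem \ref{main-thm-range}(i) is not available, again because $\E[L_1]=\infty$; the correct all-$n$ argument in that case is the leaf-excursion sandwich from the proof of Theorem \ref{main-thm-range}(ii), which you already have on the table from your null-recurrent paragraph. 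With these two justifications adjusted, your proof is the paper's, carried out with the correct patch for the random leaf count.
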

The proof follows the lines of the proof of Theorem \ref{main-thm-range}, with minor changes which we state below. The offspring distribution of the tree of good children $\T^{\mathsf{good}}$ will be here replaced with
\eqref{eq:offspring-gw}, and the corresponding mean offspring number is $m$.  If $L_k$ and $\R_{\tau_k}$ are the same as in the proof of Theorem  \ref{thm:range_lln_homtree}, then in case when $(X_n)$ moves on a Galton-Watson tree $\T$, \begin{align}
\label{eq:Lk_Rk_relation}
L_k = 1 + \sum_{v\in \mathcal{R}_k} \xi_v - |\mathcal{R}_k|,\quad \text{almost surely},
\end{align}
where $\xi_v$ denotes the (random) number of children of the vertex $v\in \T$. Since
$\T$ is the initial Galton-Watson tree with mean offspring number $\mu$, by Wald's identity we get
\begin{align*}
\E[L_1] =1+ \frac{1}{1-m} \mu - \frac{1}{1-m} = \frac{\mu - m}{1-m}:=\lambda>1.
\end{align*}
Then $\nu$ in Theorem \ref{thm:range_lln_homtree} will  be replaced with $\lambda$ and the rest works through.
In the proof of Theorem \ref{main-thm-range}(i), $\alpha$ will be replaced with $\frac{\mu-1}{2(\mu-m)}$.
Theorem \ref{thm:range-null-rec} works as well here, with a minor change in equation \eqref{ineq:tauk},
where the last term will be $$\frac{1 + \sum_{v\in \mathcal{R}_{k-1}} \xi_v - |\mathcal{R}_{k}|}{\tau_{k-1}},$$
while the following relations stay the same.
In the transient case, in the proof of Theorem \ref{thm:transience-reg-tree}, when computing the expectation
$\E_{\mathsf{non}}[N_0]$, the degree $d-1$ has to be replaced by the offspring distribution $\xi$, which in terms of the generating function $f_{\T}$ of $\T^{\mathsf{good}}$, and its extinction probability $q$ produces the same result. 

If $(X_n)$ is an uniform rotor walk on the Galton-Watson tree $\T$, we have a particularly simple limit.
In \cite{huss_muller_sava_gw}, it was shown that $(X_n)$ is recurrent if and only if $\mu \leq 2$.
Moreover, the tree of good children $\T^{\mathsf{good}}$ is a subcritical Galton-Watson tree with
offspring distribution given by 
$\nu = \left(\sum_{k\geq l} \frac{1}{k+1}p_k\right)_{l\geq 0}$, and mean offspring number $m = \E[\nu] = \frac{\mu}{2} \leq 1$.

\begin{corollary}
\label{range_law_of_large_numbers_uniform}
For the range of uniform rotor walks 
on Galton-Watson trees $\T$, we have 
\begin{equation*}
\lim_{n\to\infty}\frac{|R_n^{\T}|}{n} = \frac{\mu - 1}{\mu},\quad \rgw- \text{ almost surely}. 
\end{equation*}
\end{corollary}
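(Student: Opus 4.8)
The plan is to specialize Theorem \ref{main-thm-range-GW} using the single structural fact recorded just before the statement: for a uniform initial configuration the tree of good children $\T^{\mathsf{good}}$ is a Galton-Watson tree whose offspring distribution $\nu=\left(\sum_{k\geq l}\frac{1}{k+1}p_k\right)_{l\geq 0}$ has mean $m=\E[\nu]=\frac{\mu}{2}$. The recurrence threshold $m=1$ of Theorem \ref{main_thm} therefore becomes $\mu=2$, and since $\T$ is supercritical we always have $\mu>1$. I would accordingly organize the argument along the three regimes of Theorem \ref{main-thm-range-GW}, substituting $m=\mu/2$ in each and checking that the constant collapses to $\frac{\mu-1}{\mu}$.

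The two recurrent regimes are immediate. For $1<\mu<2$ the walk is positive recurrent, $m=\mu/2<1$, and Theorem \ref{main-thm-range-GW}(i) gives $\alpha_{\T}=\frac{\mu-1}{2(\mu-m)}=\frac{\mu-1}{2(\mu-\mu/2)}=\frac{\mu-1}{\mu}$. At the critical value $\mu=2$ the walk is null recurrent and part (ii) gives $\alpha_{\T}=\frac12=\frac{\mu-1}{\mu}$, so the formula extends continuously across the threshold. These two one-line substitutions already cover the entire recurrent range $\mu\in(1,2]$, which — given that the preceding computation records $m=\mu/2\le 1$ — is precisely where the clean answer $\frac{\mu-1}{\mu}$ lives, and this is the heart of the corollary.

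The remaining work, and the step I expect to be the main obstacle, is the transient regime $\mu>2$, where Theorem \ref{main-thm-range-GW}(iii) must be evaluated with the explicit generating function rather than with $m$ alone. Here I would first put $f_{\T}$ in closed form: interchanging the order of summation in $f_{\T}(s)=\sum_{l}s^{l}\sum_{k\geq l}\frac{p_k}{k+1}$ yields
\[
f_{\T}(s)=\frac{1}{s-1}\int_{1}^{s}\phi(t)\,\d t,
\]
where $\phi(s)=\sum_{k}p_k s^{k}$ is the offspring generating function of $\T$ itself, with $\phi'(1)=\mu$. Differentiating the identity $(s-1)f_{\T}(s)=\int_{1}^{s}\phi$ and evaluating at the fixed point $f_{\T}(q)=q$ produces the convenient relation $f_{\T}'(q)=\frac{\phi(q)-q}{q-1}$, which I would feed into the expressions \eqref{eq:exp-t0} and \eqref{eq:exp-n0} for $\E_{\mathsf{non}}[|\widetilde{T}_0(1)|]$ and $\E_{\mathsf{non}}[N_0]$. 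The delicate point is that the limiting constant is controlled entirely by the product $\E_{\mathsf{non}}[N_0]\,\E_{\mathsf{non}}[|\widetilde{T}_0(1)|]$, so the crux is to simplify this product and then reconcile its dependence on $q$ and $\phi(q)$ with the claimed $\mu$-only value $\frac{\mu-1}{\mu}$. This reconciliation is where I would check the algebra most carefully, because $q$ enters only implicitly, through the fixed-point equation $\int_{q}^{1}\phi(t)\,\d t=q(1-q)$, and it is not a priori clear that the $q$-dependence cancels to leave a function of $\mu$ alone.
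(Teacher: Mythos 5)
Your treatment of the recurrent regime is exactly the intended proof: the corollary is a direct specialization of Theorem \ref{main-thm-range-GW}, and substituting $m=\E[\nu]=\mu/2$ into part (i) gives $\frac{\mu-1}{2(\mu-\mu/2)}=\frac{\mu-1}{\mu}$, with part (ii) at $\mu=2$ agreeing continuously. That two-line substitution is the whole argument; the paper offers nothing more.

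The problem is your third paragraph. The ``remaining work'' you set yourself in the transient regime $\mu>2$ is not an obstacle to be overcome but a false goal: the identity $\alpha_{\T}=\frac{\mu-1}{\mu}$ simply does not hold there, and no amount of algebra with $f_{\T}'(q)=\frac{\phi(q)-q}{q-1}$ will make the $q$-dependence cancel. The paper itself tells you this. The sentence immediately preceding the corollary asserts that for the uniform configuration $\T^{\mathsf{good}}$ is \emph{subcritical} with $m=\mu/2\leq 1$, which tacitly restricts the corollary to the recurrent regime $\mu\in(1,2]$; and the table in the section on uniform rotor walks on regular trees (a regular tree $\TT_d$ is the Galton--Watson tree with $p_d=1$, $\mu=d$) records $\alpha\approx 0.707$ for $d=3$ versus $\frac{\mu-1}{\mu}=\frac{2}{3}$, together with the explicit remark that $\alpha>(d-1)/d$ for all $d\geq 3$. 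So in the transient case the constant genuinely depends on the offspring distribution through $q$ and $f_{\T}'(q)$, not on $\mu$ alone. Your closing caveat that the cancellation is ``not a priori clear'' is the right instinct; the correct resolution is to recognize that the corollary's scope is $\mu\leq 2$ and to delete the transient case from the proof rather than to attempt it.
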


Finally, we also have the existence of the rate of escape.
\begin{theorem}
\label{main-thm-escape-GW}
Let $\T$ be a Galton-Watson tree with offspring distribution $\xi$ and mean offspring number $\E[\xi]=\mu>1$.
If $(X_n)$ is a rotor walk with random $\mathcal{Q}$-distributed  initial configuration on $\T$, and $\nu=\xi\cdot\mathcal{Q}$, then there exists a constant $l_{\T}\geq 0$, such that
\begin{equation*}
\lim_{n\to \infty}\frac{|X_n|}{n} =l_{\T},\quad \rgw-\text{almost surely}.
\end{equation*}
\begin{enumerate}[(i)]
\item If $(X_n)_{n\in\N}$ is  recurrent, then $l_{\T}=0$.
\item If $(X_n)_{n\in\N}$ is transient, then conditioned on non-extinction of $\T^{\mathsf{good}}$, 
\begin{equation*}
l_{\T}=\frac{(q-f'(q))(1-q)}{q+q^2-f'(q)(2q^2-q+1)},
\end{equation*}
where $q>0$ is the extinction probability of $\T^{\mathsf{good}}_d$.
\end{enumerate}
\end{theorem}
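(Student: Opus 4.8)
The plan is to follow the proofs of Theorem \ref{main-thm-escape}(i) and (ii) line by line, incorporating exactly the modifications already used to pass from regular to Galton-Watson trees in Theorem \ref{main-thm-range-GW}. The only structural change is that the deterministic relation $|\partial_o\R_{\tau_k}| = 1+(d-1)|\R_{\tau_k}|$ is replaced by the random identity \eqref{eq:Lk_Rk_relation}, $L_k = 1 + \sum_{v\in\R_k}\xi_v - |\R_k|$, and that the boundary process $(L_k)$ now has mean offspring number $\lambda = (\mu-m)/(1-m)>1$ in the positive recurrent case. Since $\lambda>1$, the process $(L_k)$ is still supercritical and $\tau_k$ still grows exponentially in $k$, which is the only feature of $(\tau_k)$ used below. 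I would treat the recurrent ($m\le 1$) and transient ($m>1$) regimes separately.

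For the recurrent case I would reproduce the proof of Theorem \ref{main-thm-escape}(i) to obtain $l_\T=0$. In the positive recurrent subcase one writes $0\le |X_n|/n \le (D_{k+1}/(k+1))\cdot((k+1)/\tau_k)$, where $D_{k+1}$ is the maximal depth reached after $k+1$ returns to the sink; the first factor stays bounded because the depth grows only linearly in the number of returns, while the second tends to $0$ since $\tau_k$ grows exponentially. In the null recurrent subcase I would keep the three-case decomposition according to whether $X_n$ lies in $\R_{k-1}$, $\R_k\setminus\R_{k-1}$, or $\R_{k+1}\setminus\R_k$, bounding $|X_n|/n$ by sums of terms of the form $|\R_{k-1}|/\tau_k$ and $\tau_1^i/\tau_k$, each of which vanishes almost surely by the Galton-Watson version of Theorem \ref{thm:range-null-rec}, which supplies $\tau_{k-1}/\tau_k\to 0$ and $|\R_{k-1}|/\tau_{k-1}\to 1/2$.

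For the transient case I would condition on non-extinction of $\T^{\mathsf{good}}$ and argue along the rightmost infinite live ray $\gamma=(\gamma_0,\gamma_1,\dots)$ with its regeneration times $(\tau_k)$. The decisive identity $d(r,\gamma_k)=|X_{n_0}|+k$ holds unchanged, since consecutive $\gamma_k$ are parent and child and hence at consecutive distances from the root. Reusing $\tau_k = |\R_0|+k+2\sum_{j=1}^{\alpha_k}\tilde t_j$ with $\alpha_k = \sum_{i=0}^{k-1}N_i$, the strong law of large numbers gives
\begin{equation*}
\frac{|X_{\tau_k}|}{\tau_k}\longrightarrow \frac{1}{1+2\,\E_{\mathsf{non}}[|\widetilde T_0(1)|]\,\E_{\mathsf{non}}[N_0]},\qquad\text{almost surely.}
\end{equation*}
The two expectations are computed as in Theorem \ref{thm:transience-reg-tree}: replacing the constant degree by the offspring variable $\xi$ and simplifying via $f_\T(q)=q$, the degree-dependence cancels and one recovers \eqref{eq:exp-t0} and \eqref{eq:exp-n0} verbatim with $f$ replaced by $f_\T$, yielding the stated formula for $l_\T$. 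Passing from the subsequence $(\tau_k)$ to all $n$ then proceeds as in Theorem \ref{main-thm-escape}(ii), sandwiching $|X_n|/n$ between quantities governed by $|X_{\tau_k}|/\tau_k$ and $|X_{\tau_{k+1}}|/\tau_{k+1}$ and using that the finite i.i.d. increments $(\tau_{k+1}-\tau_k)$ force $\tau_{k+1}/\tau_k\to 1$.

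The step I expect to require the most care is justifying that the regeneration increments $(\tau_{k+1}-\tau_k)$, together with the pairs $(N_k,(|\widetilde T_k(j)|)_j)$, remain i.i.d.\ in the Galton-Watson environment. On the regular tree this was immediate from the i.i.d.\ rotors (Claims 1--3 of Theorem \ref{thm:transience-reg-tree}); on $\T$ one must additionally invoke the backbone/trap decomposition of $\T^{\mathsf{good}}$ conditioned on non-extinction to see that the environment seen from each backbone vertex $\gamma_k$ is stationary and independent across $k$. A second delicate point is the linear-growth bound on the maximal depth $D_k$ in the positive recurrent case, which on regular trees was quoted from \cite{angel_holroyd}; on Galton-Watson trees I would derive it from the exponential growth of $|\R_k|$ together with control on the heights of the subcritical trees of good children attached during each excursion.
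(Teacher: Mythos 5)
Your proposal matches the paper's treatment: the paper does not write out a separate proof of Theorem \ref{main-thm-escape-GW}, but simply asserts that the arguments of Theorem \ref{main-thm-escape} carry over with exactly the modifications you list — replacing the deterministic boundary relation by \eqref{eq:Lk_Rk_relation}, replacing $\nu$ by $\lambda=(\mu-m)/(1-m)$, and replacing the constant degree by the offspring variable $\xi$ in the computation of $\E_{\mathsf{non}}[N_0]$, which cancels to give the same formula in terms of $f_{\T}$ and $q$. Your flagging of the i.i.d.\ structure of the regenerations and the depth bound $D_k$ identifies real points the paper leaves implicit, but your route is the same as the paper's.
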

The constant $l_{\T}$ is in the following relation with the constant $\alpha_{\T}$ from Theorem \ref{main-thm-range-GW} in the null recurrent and in the transient case:
\begin{equation*}
2\alpha_{\T}-l_{\T}=1.
\end{equation*}

\subsection{Simulations on Galton-Watson trees}

We present here some simulation data about the growth of the range for a few one
parameter families of Galton-Watson trees, for which the rotor walk is either recurrent
or transient, depending on the parameter. In the table below, the Galton-Watson trees we use in the simulation are presented.

\begin{figure}[h]
\centering
\includegraphics[width=0.75\linewidth]{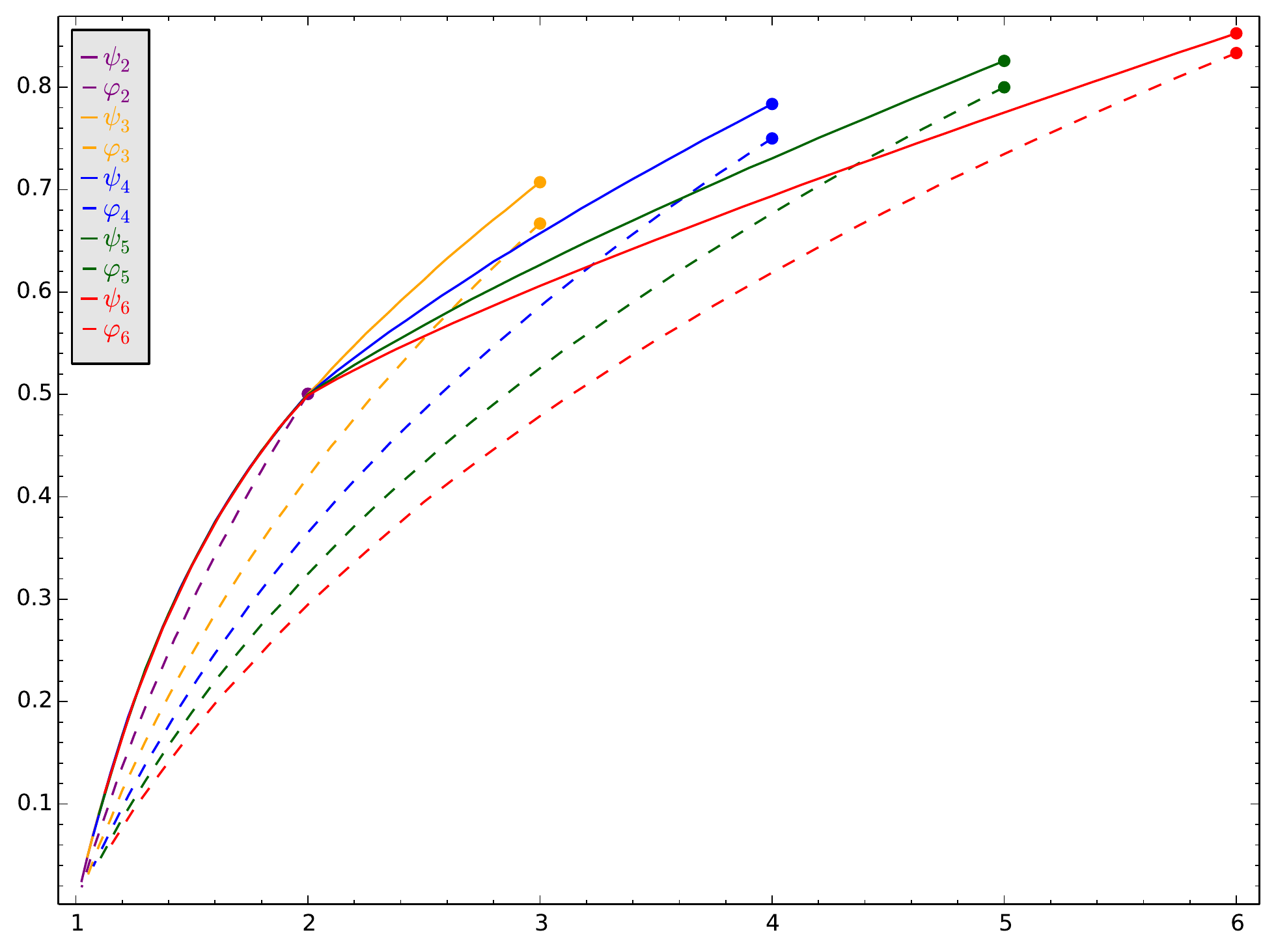}
\caption{\label{fig:range_growth_gw} Plots of the linear growth coefficients of the size of the range for rotor walk (solid lines) and simple random walk (dashed lines) on the Galton-Watson trees $\T_2,\ldots,\T_6$. The $x$-axis depicts the branching number of the tree. The dots show the corresponding values for the regular trees.}
\end{figure} 

\begin{center}
\begin{tabular}{c|c|c}
$\T_i$ & $(p_1,\ldots, p_6)$ & $\mu_i$ \\
\hline
$\T_{2}$ & $ p_1 = p, p_2 = 1-p $ & $ 2-p $  \\
$\T_{3}$ & $ p_1 = p, p_3 = 1-p $ & $ 3-2 p $  \\
$\T_{4}$ & $ p_1 = p, p_4 = 1-p $ & $ 4-3 p $  \\
$\T_{5}$ & $ p_1 = p, p_5 = 1-p $ & $ 5-4 p $  \\
$\T_{6}$ & $ p_1 = p, p_6 = 1-p $ & $ 6-5 p $  
\end{tabular}
\end{center}

For each $i=2,\ldots,6$ and $p\in[0,1)$ denote by $\widetilde{\psi}_i(p)$ and $\widetilde{\varphi}_i(p)$ the simulated values of the limits
\begin{equation}
\lim_{n\to\infty} \frac{| R^{i}_n|}{n} = \widetilde{\psi}_i(p) \qquad \lim_{n\to\infty} \frac{| S^{i}_n|}{n} = \widetilde{\varphi}_i(p),
\end{equation}
where $R_n^{i}$ and $S_n^{i}$ represent the range of the rotor walk and of the simple random walk up to time $n$ on $\T_i$, respectively.
To be able to compare the values we plot the constants $\widetilde{\psi}_i(p)$ and $\widetilde{\varphi}_i(p)$ against the mean offspring number $\mu_i(p)$ of the offspring distribution on $\T_i$.
That is, we look at the functions $\psi_i = \tilde{\psi}_i\circ \mu^{-1}_i$ and $\varphi_i = \tilde{\varphi}_i\circ \mu^{-1}_i$; see Figure \ref{fig:range_growth_gw}.

\begin{appendices}
\section{The contour of a subtree}\label{sec:contour}
We discuss here some further ways one can look at the range of rotor walks on regular trees $\TT_d$, and at their contour functions, which according to simulations seem to have interesting fractal properties.

For $d\geq 2$, let $\Sigma_d = \{0,\ldots,d-1\}$ and denote by $\Sigma_d^\star$ the set of
finite words over the alphabet $\Sigma_d$. We use $\epsilon$ to denote the
empty word.
For $w\in\Sigma_d^\star$ we write $|w|$ for the number of letters in $w$. If $w,v\in\Sigma_d^\star$ we write $wv$ for the concatenation of
the words $w$ and $v$.
We  identify the tree $\widetilde{\TT}_d$ with the set $\Sigma_d^\star$, since every vertex in $\widetilde{\TT}_d$ can be uniquely represented by a word in $\Sigma_d^\star$.
For $w = w_1\dots w_{n-1}w_n \in \Sigma_d^\star\setminus\{\epsilon\}$, the word $w_1\dots w_{n-1}$ is the predecessor of $w$ in the tree, and for all $w\in\Sigma_d^\star$ the children of $w$ are given by the words $w k$ with $k \in \Sigma_d$.
Using the previous notation, for $w=w_1\dots w_{n-1}w_n$ we have $w^{(0)} = w_1\dots w_{n-1}$ and for $k = 0,\ldots,d-1$, $w^{(d-k)} = wk$.
Let $A\subset \widetilde{\TT}_d$ be a finite connected subset (a subtree) of $\widetilde{\TT}_d$ containing the root $\epsilon$. We
identify $A$ with a piecewise constant function $f_A:[0,1]\to {\mathbb{N}}_{\geq 0}$ as following. For each
$x\in[0,1]$, we identify $x$ with the infinite word $x_1x_2\dots$ where the $x_i$ are the digits
expansion of $x$ in base $d$, that is $ x = \sum_{i=1}^\infty x_i d^{-i}$.
We then define the function $f_A$ pointwise as following
\begin{equation}
f_A(x) = \min \{n \geq 1: x_1\dots x_n \not\in A\},
\end{equation}
and we call $f_A$ the \emph{contour} of the set $A$.

\begin{figure}
\includegraphics[width = 0.24\linewidth]{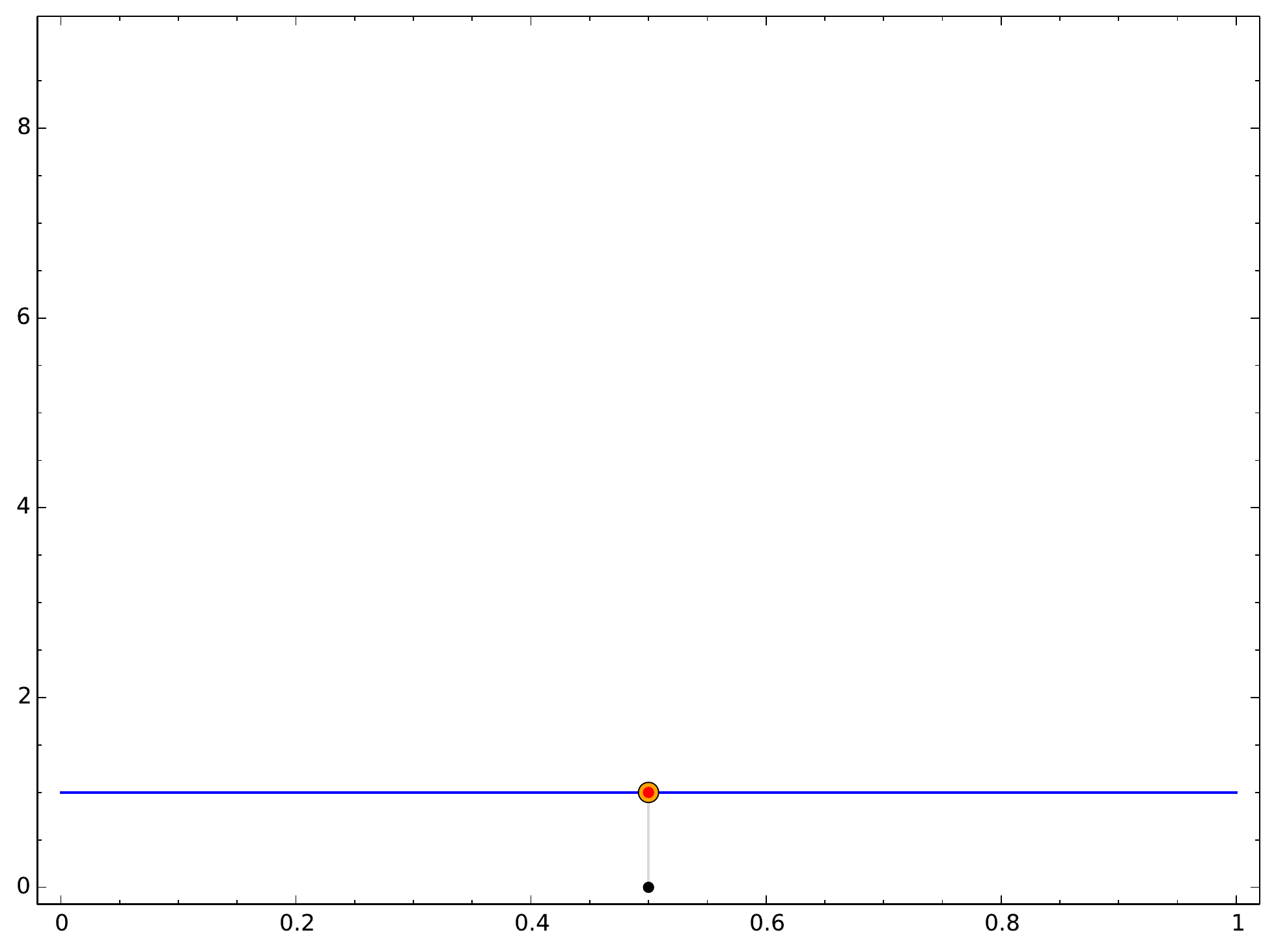}\hfill
\includegraphics[width = 0.24\linewidth]{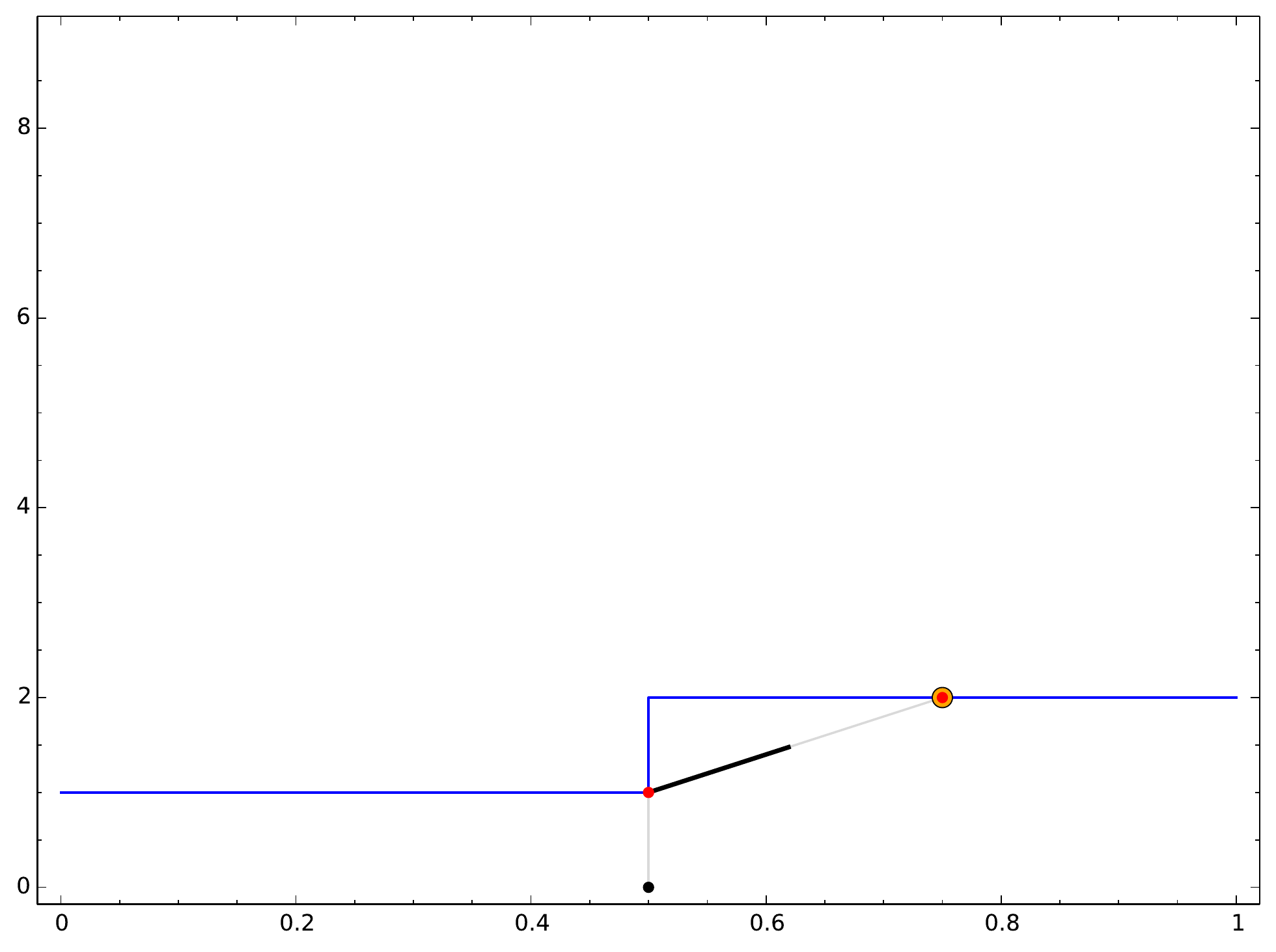}\hfill
\includegraphics[width = 0.24\linewidth]{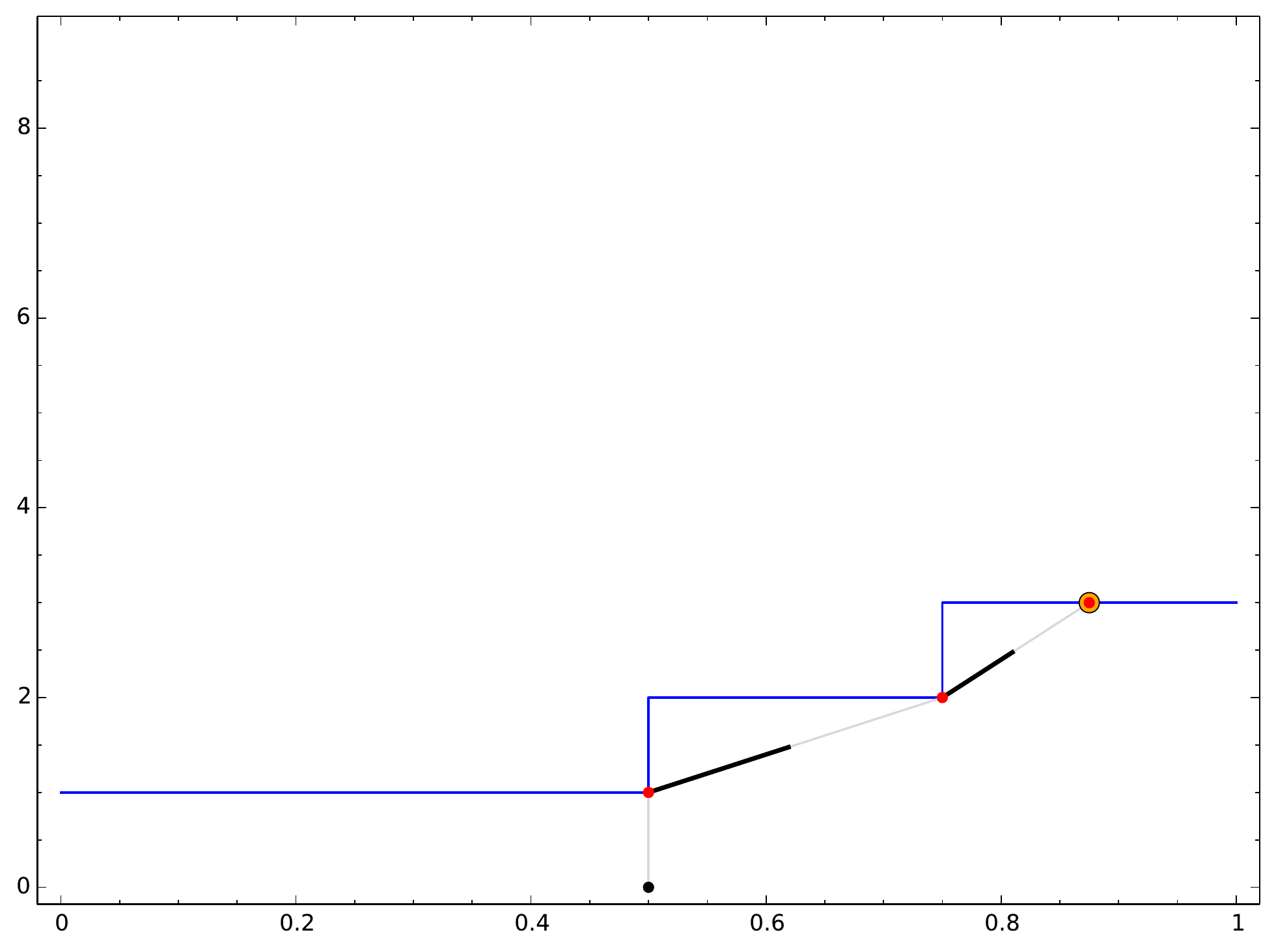}\hfill
\includegraphics[width = 0.24\linewidth]{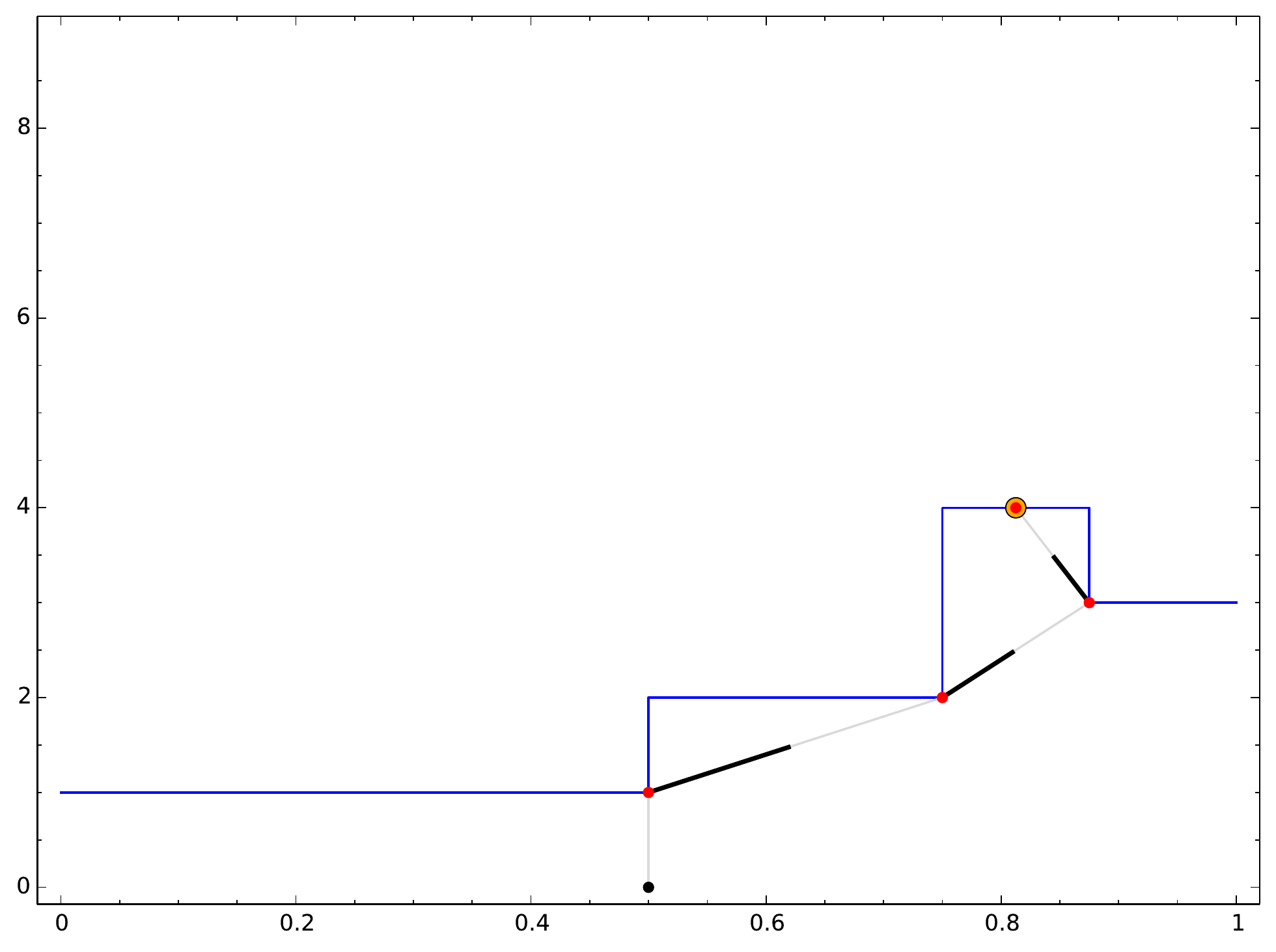}\\[1ex]
\includegraphics[width = 0.24\linewidth]{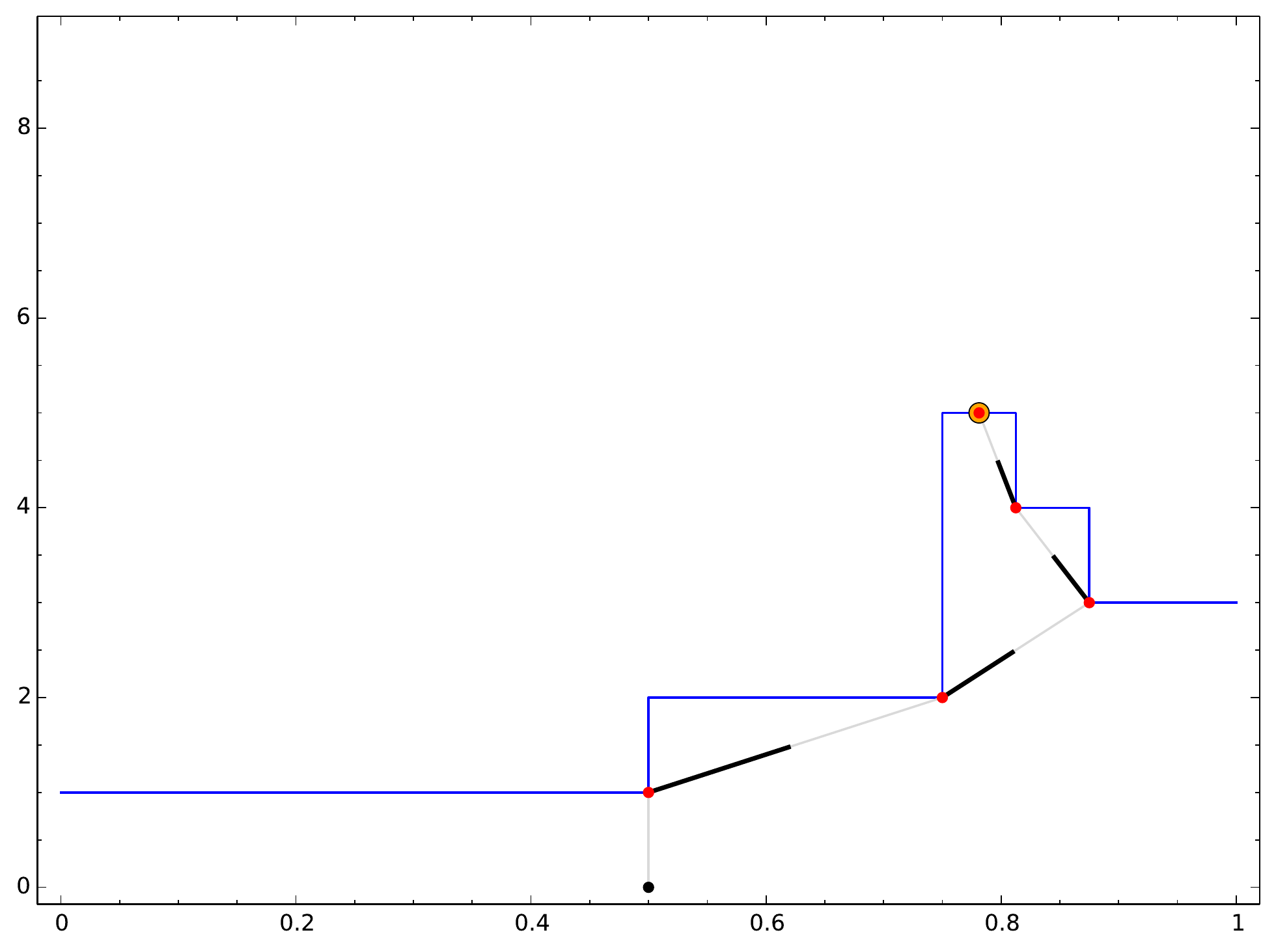}\hfill
\includegraphics[width = 0.24\linewidth]{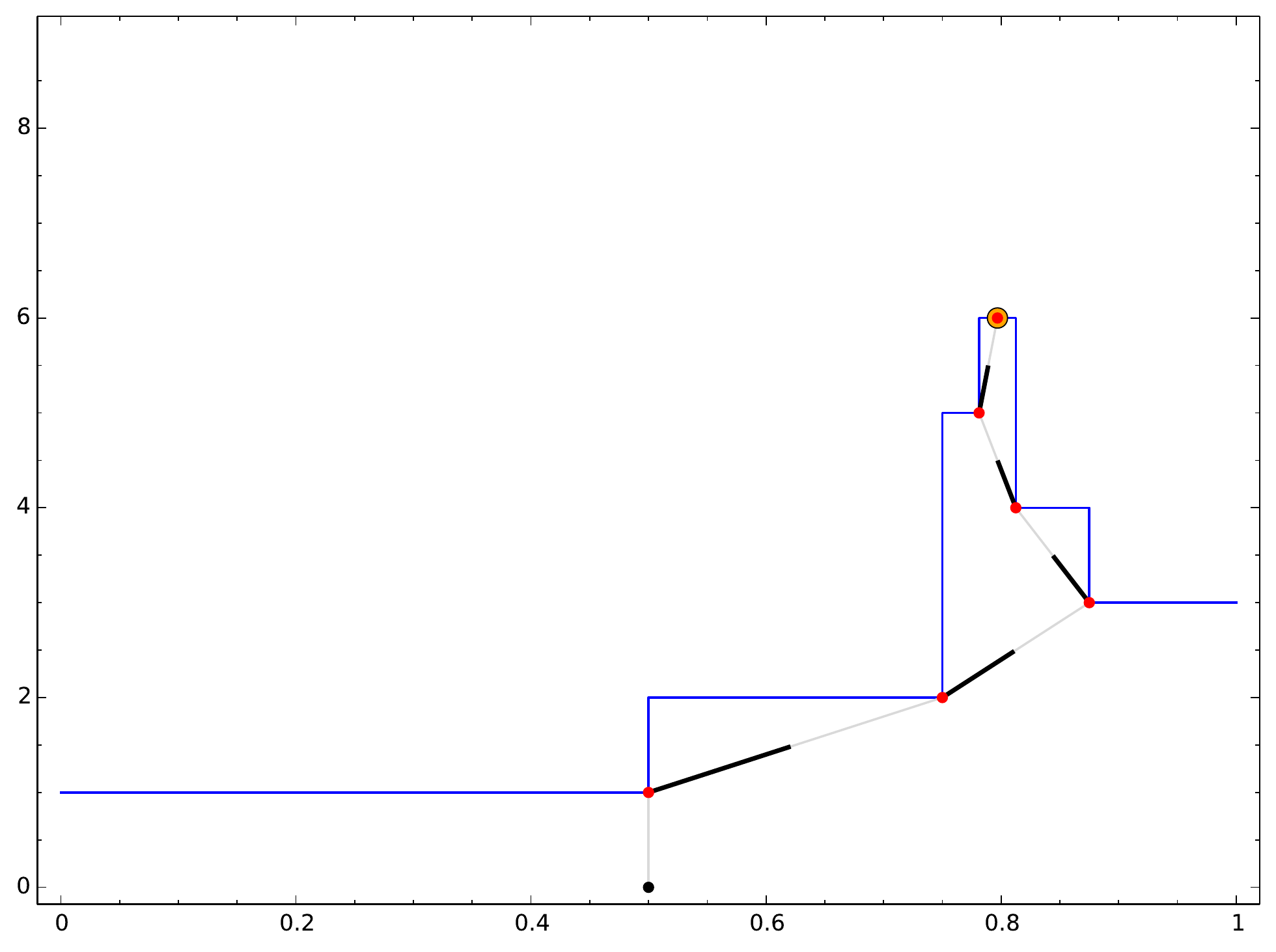}\hfill
\includegraphics[width = 0.24\linewidth]{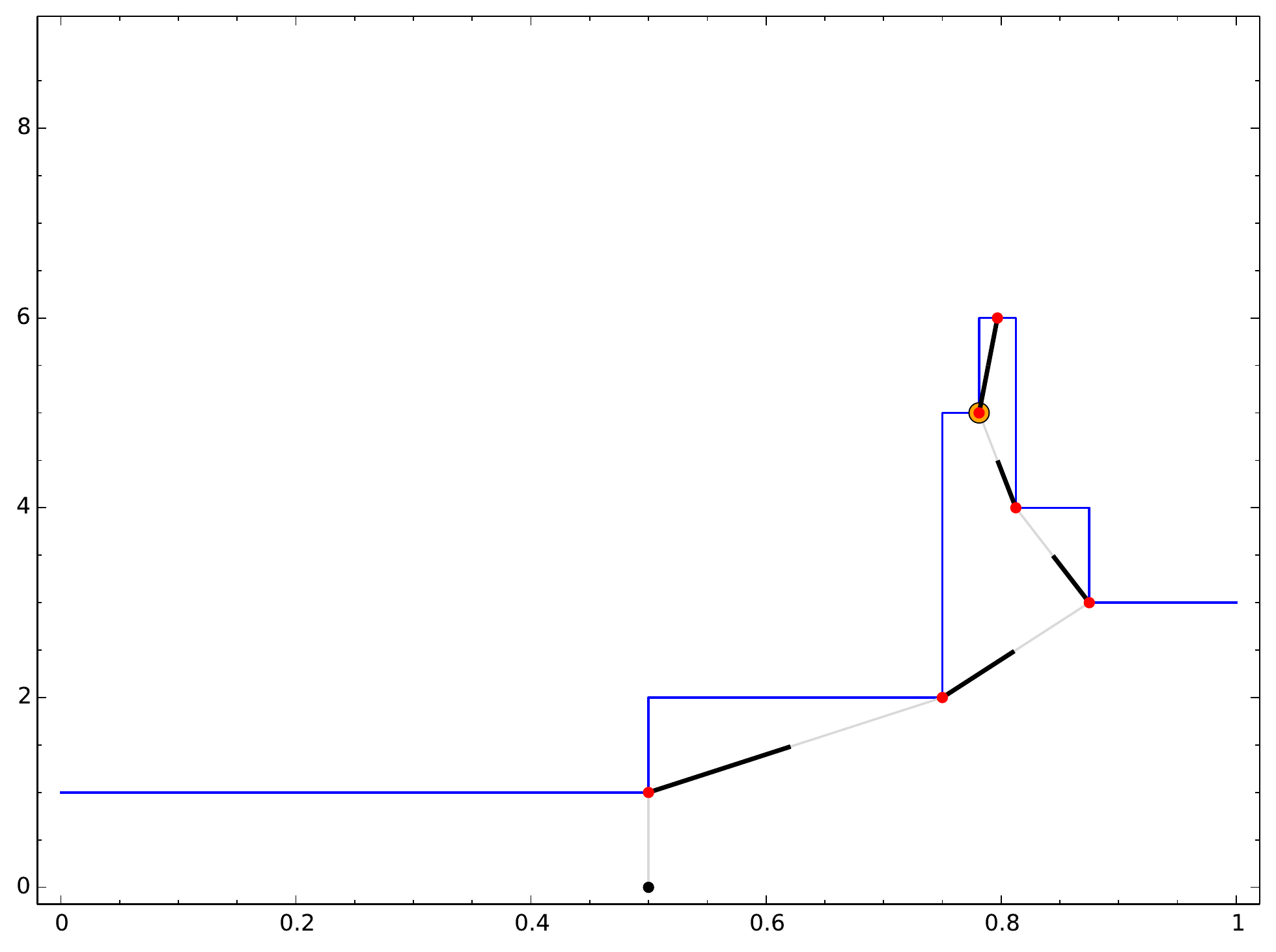}\hfill
\includegraphics[width = 0.24\linewidth]{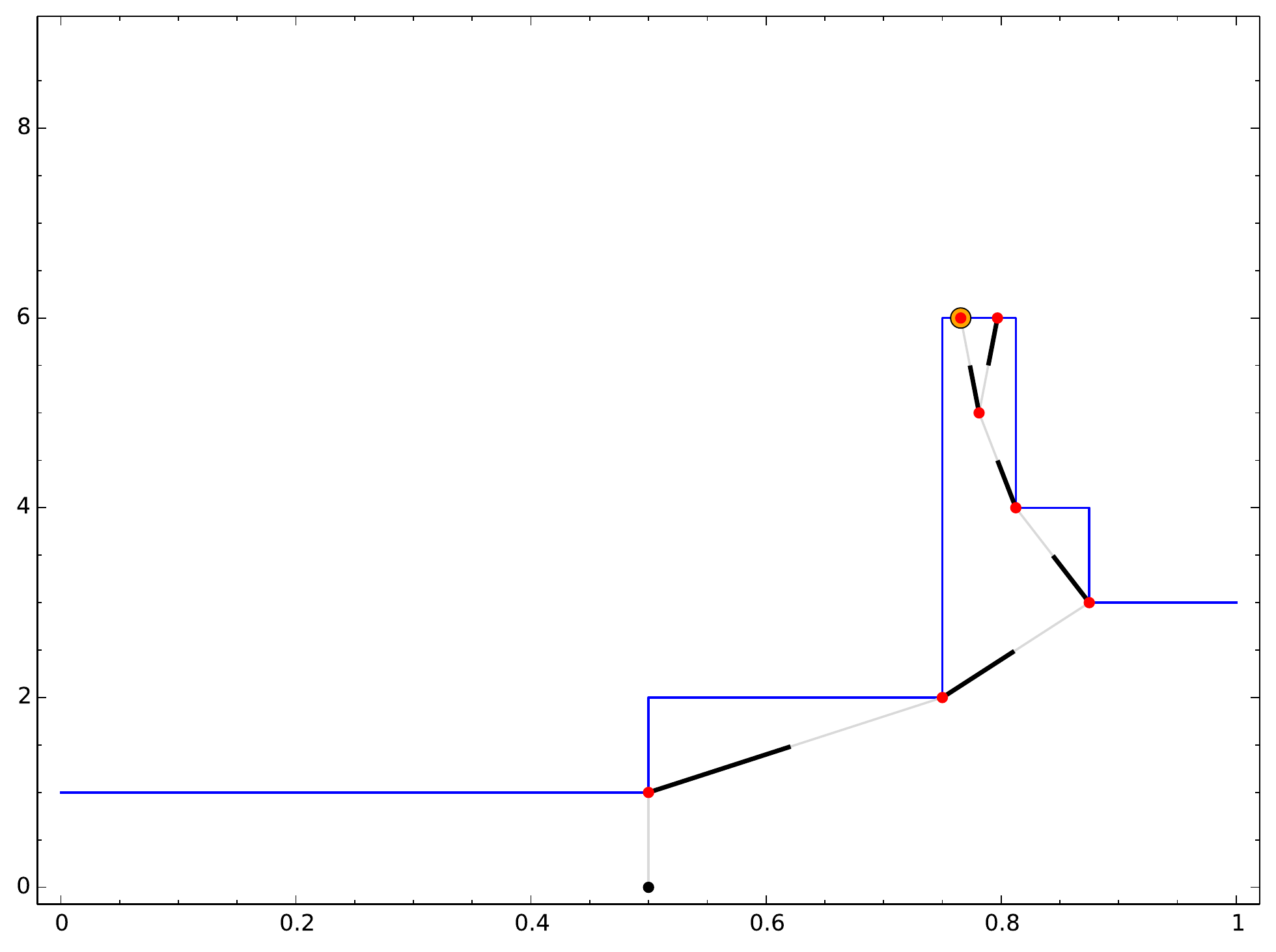}\\[1ex]
\includegraphics[width = 0.24\linewidth]{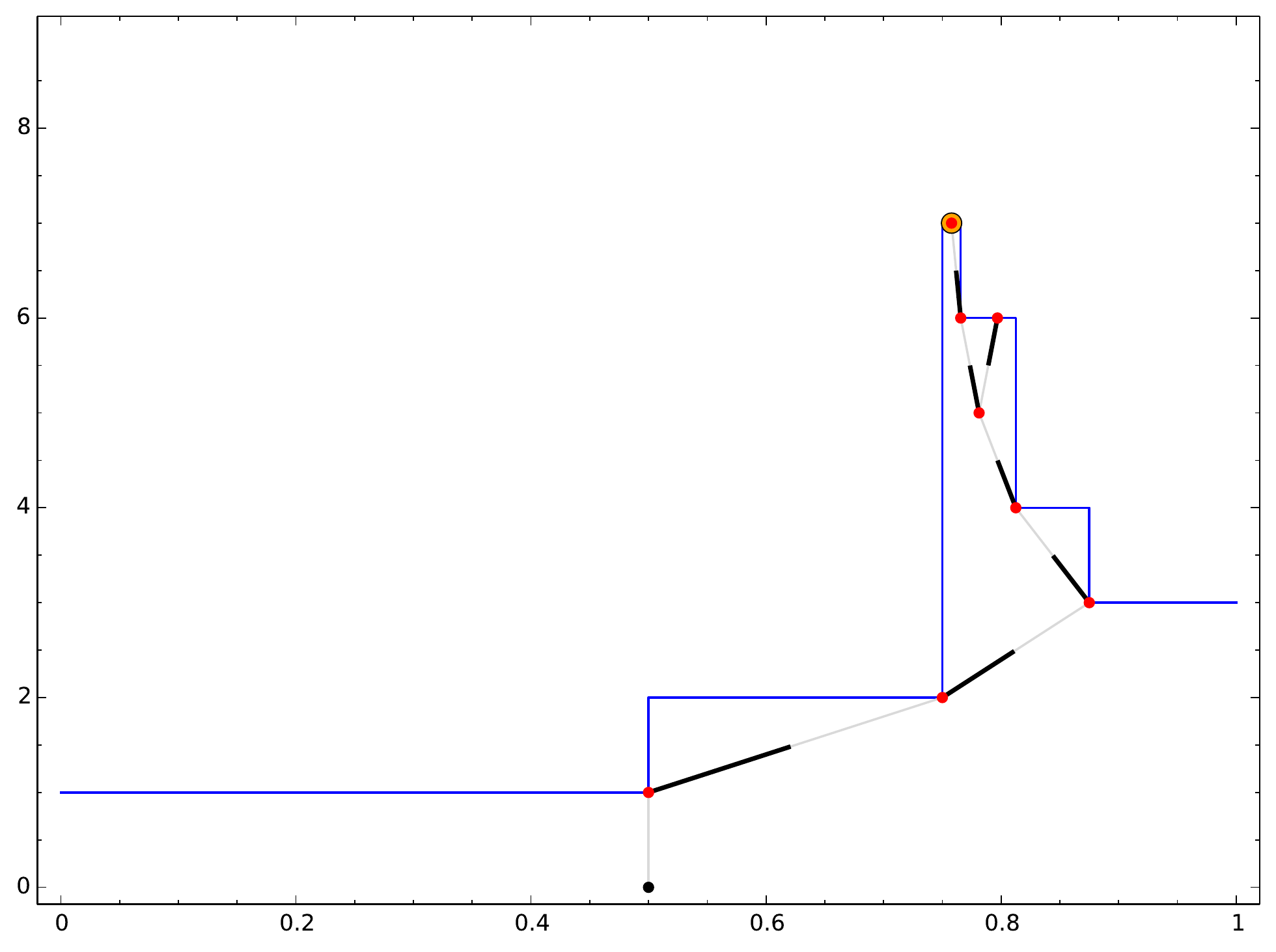}\hfill
\includegraphics[width = 0.24\linewidth]{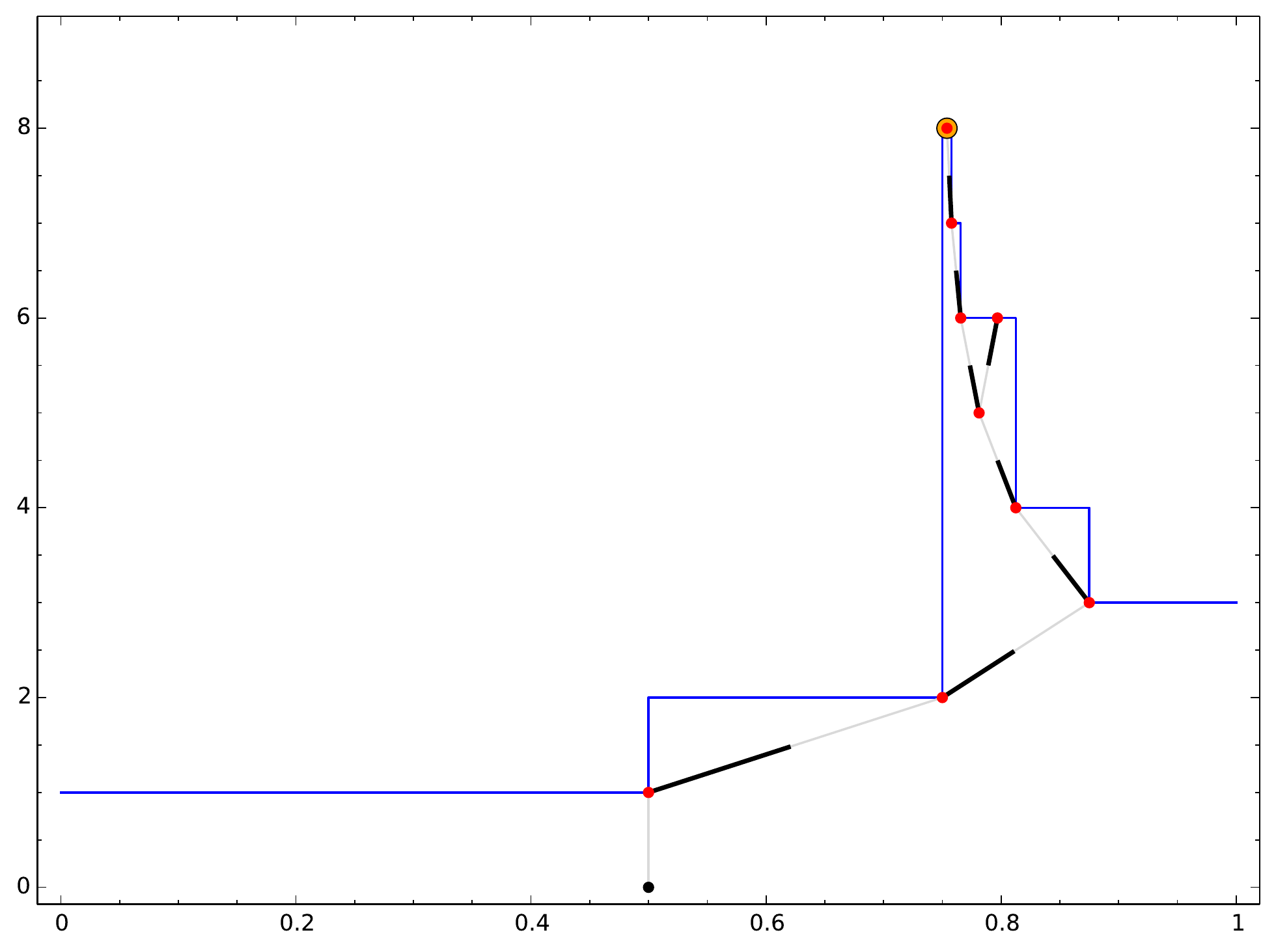}\hfill
\includegraphics[width = 0.24\linewidth]{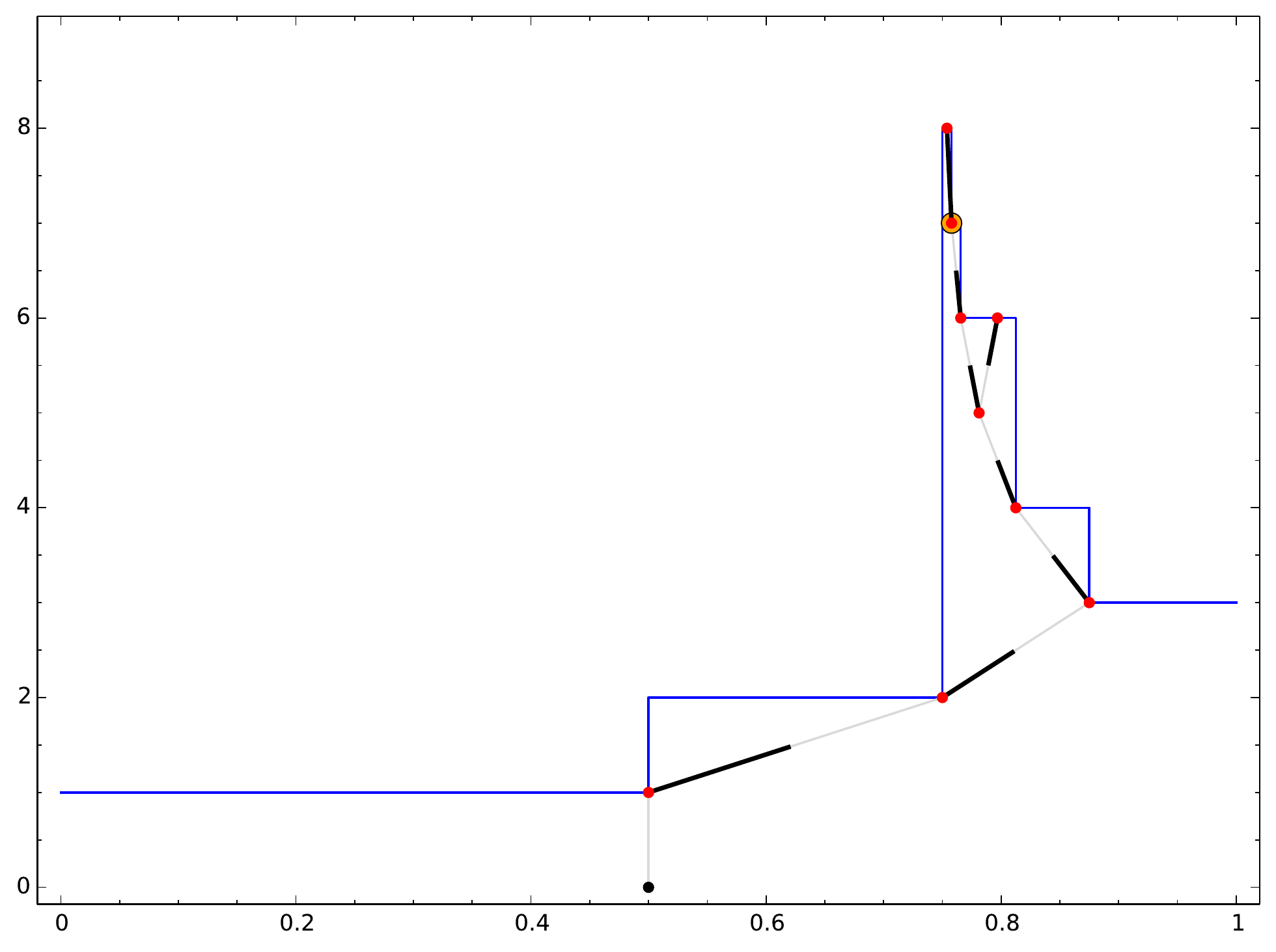}\hfill
\includegraphics[width = 0.24\linewidth]{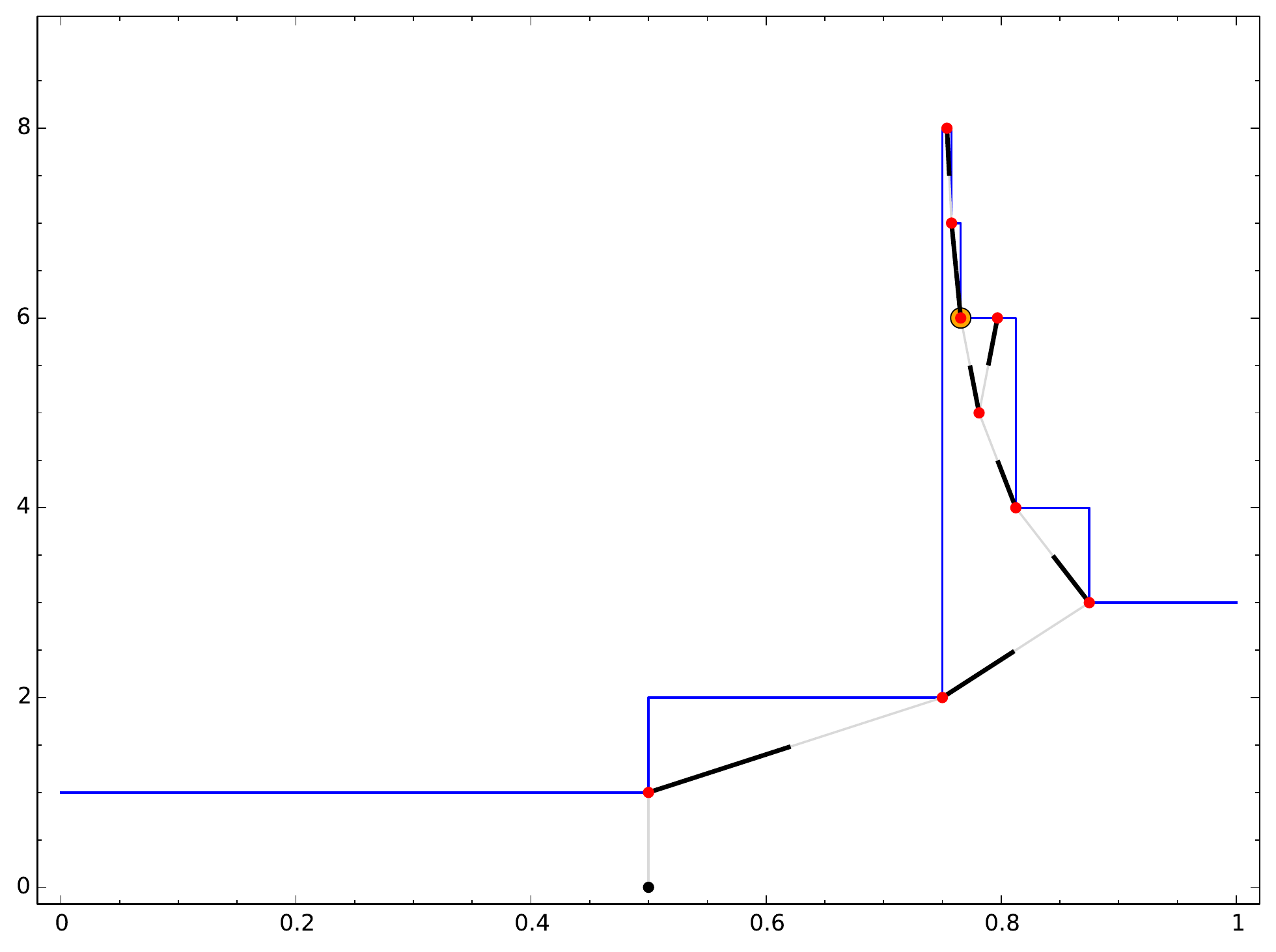}\\[1ex]
\includegraphics[width = 0.24\linewidth]{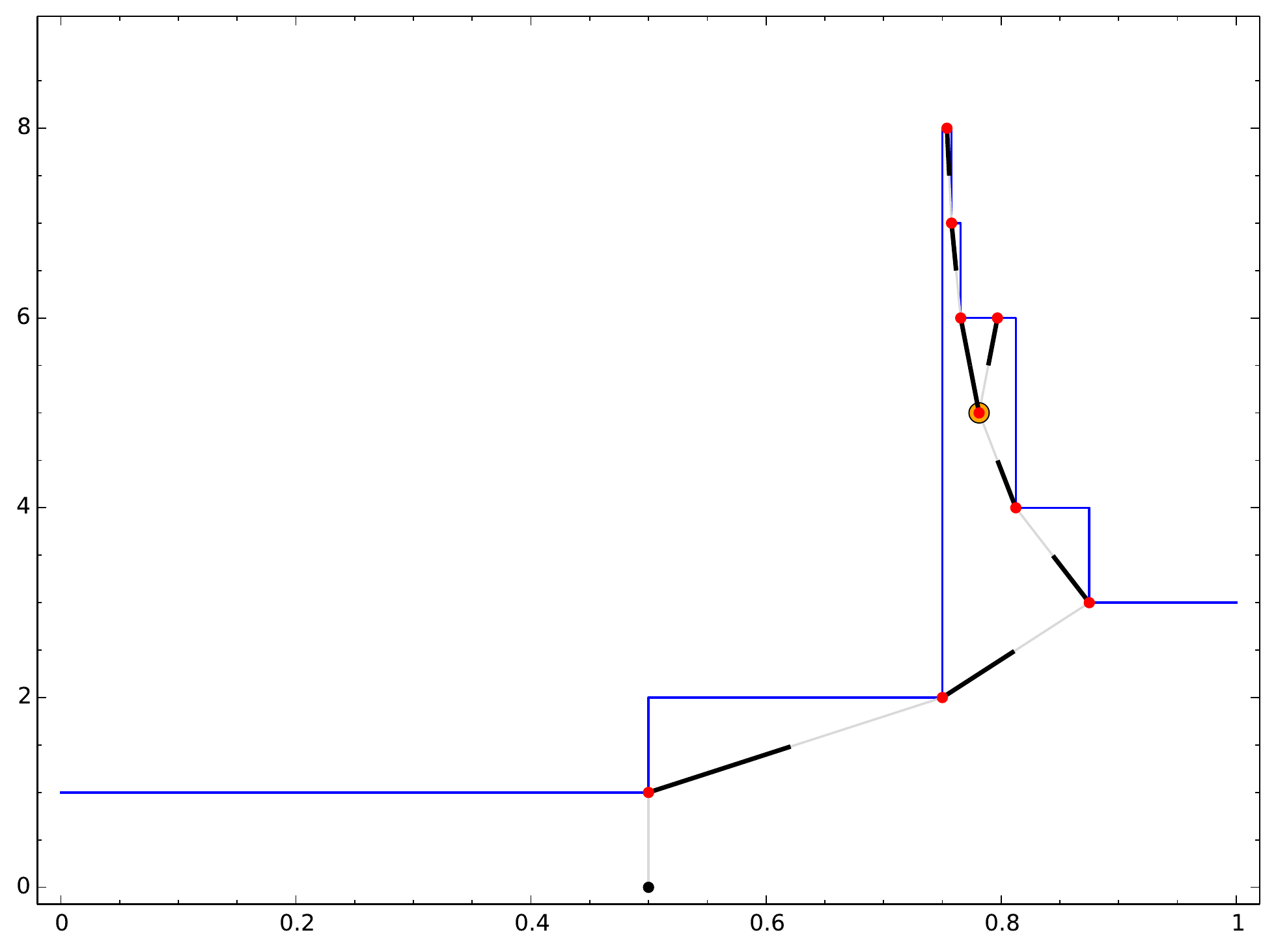}\hfill
\includegraphics[width = 0.24\linewidth]{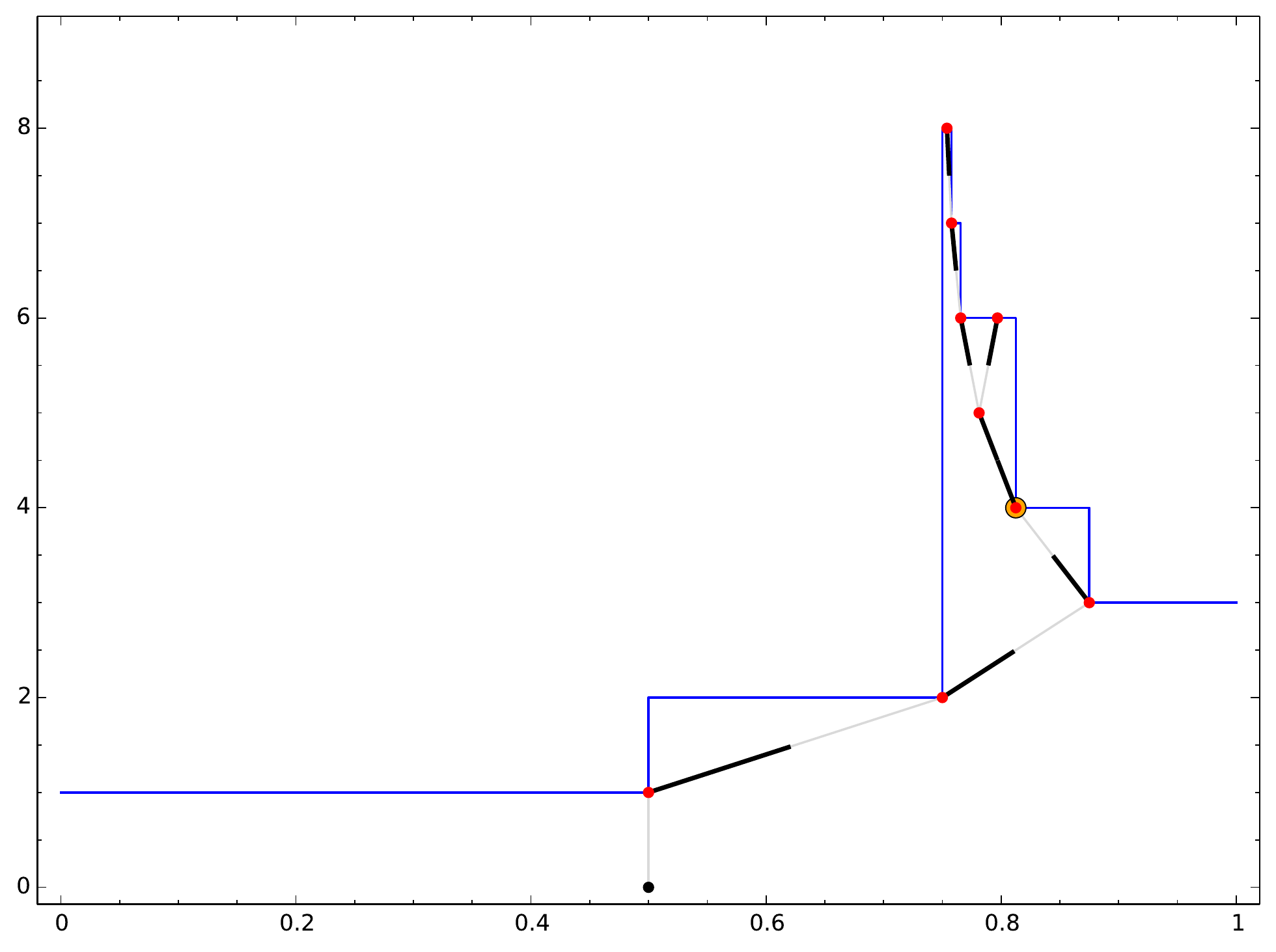}\hfill
\includegraphics[width = 0.24\linewidth]{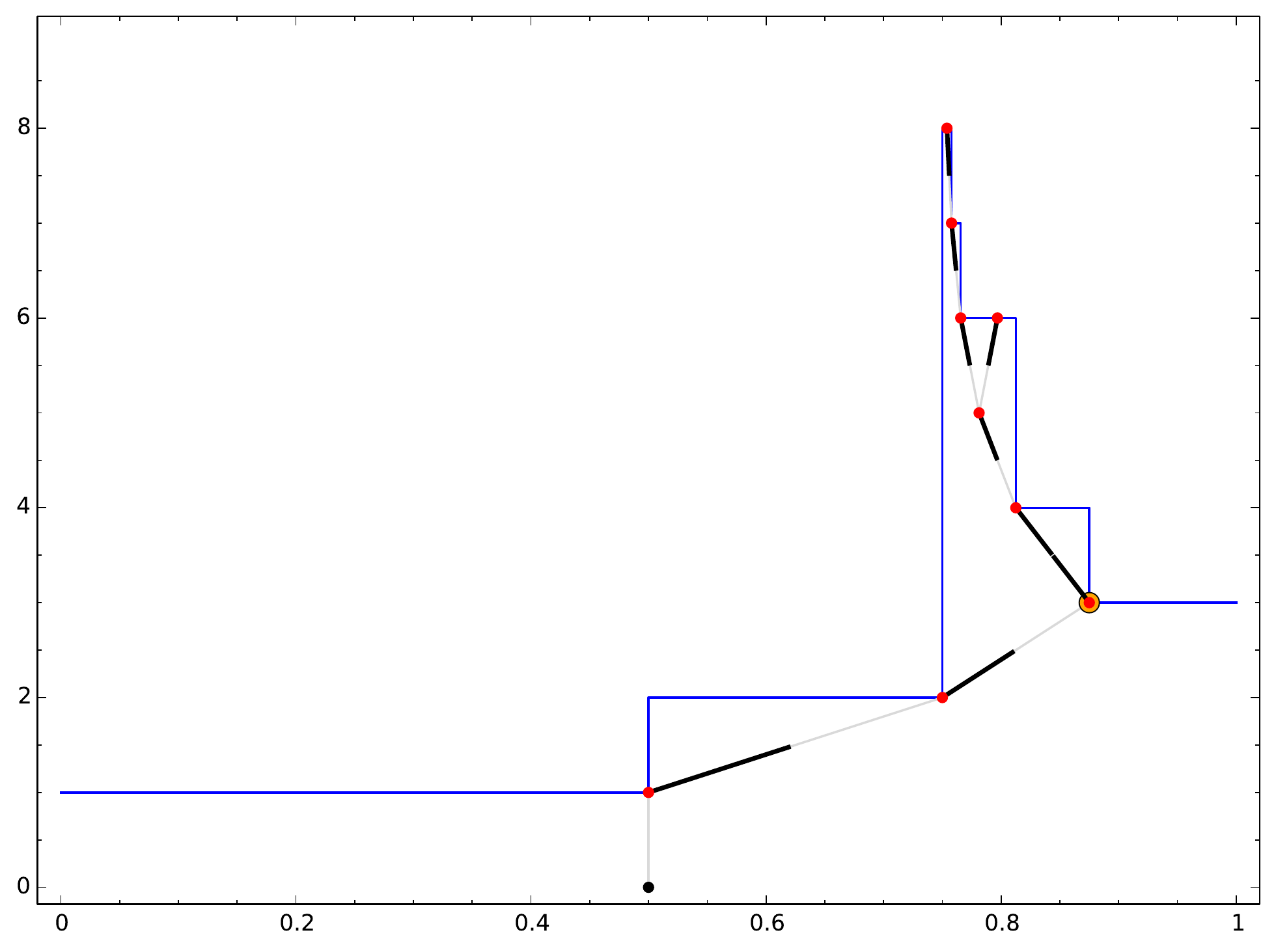}\hfill
\includegraphics[width = 0.24\linewidth]{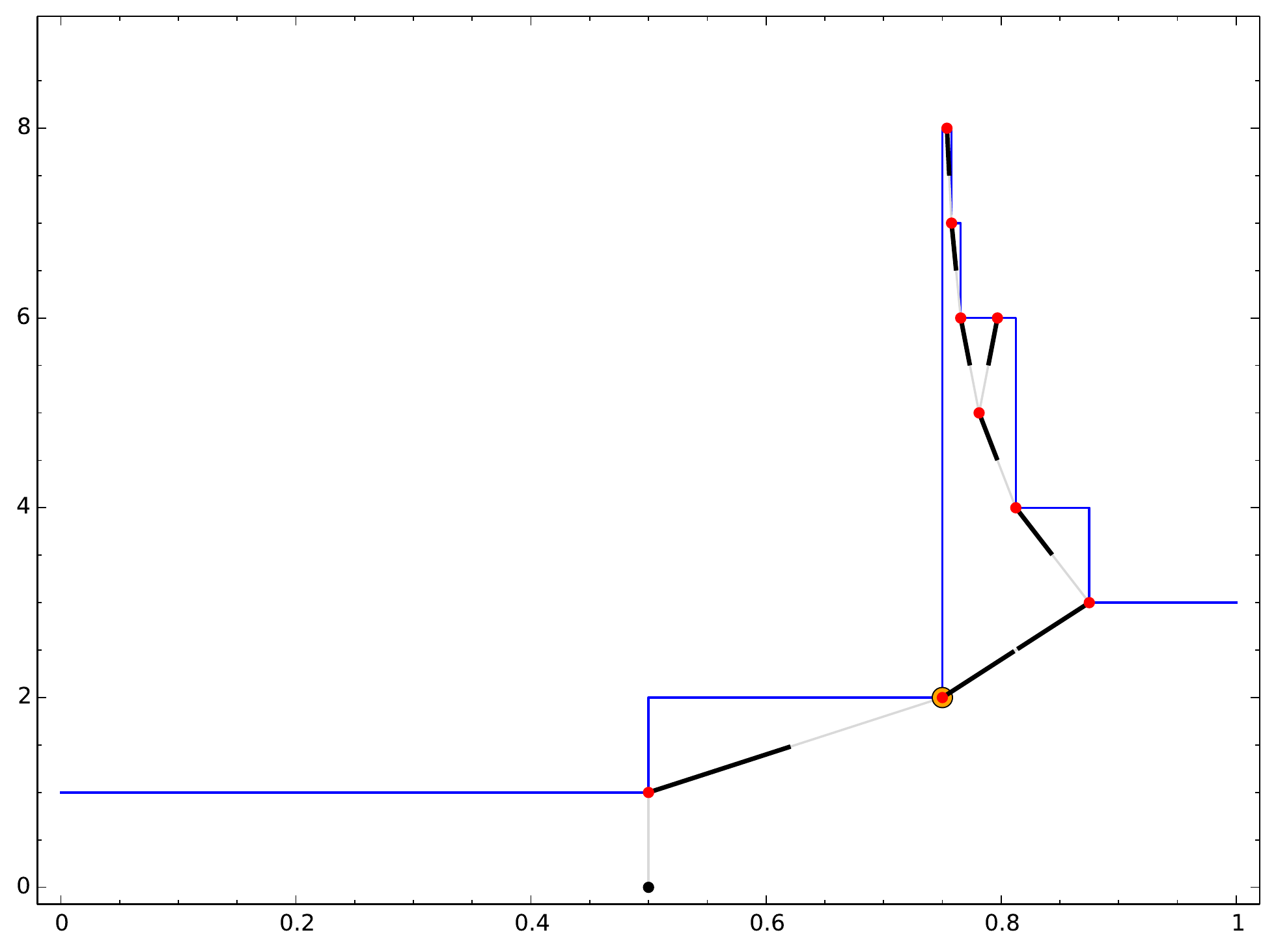}\\[1ex]
\includegraphics[width = 0.24\linewidth]{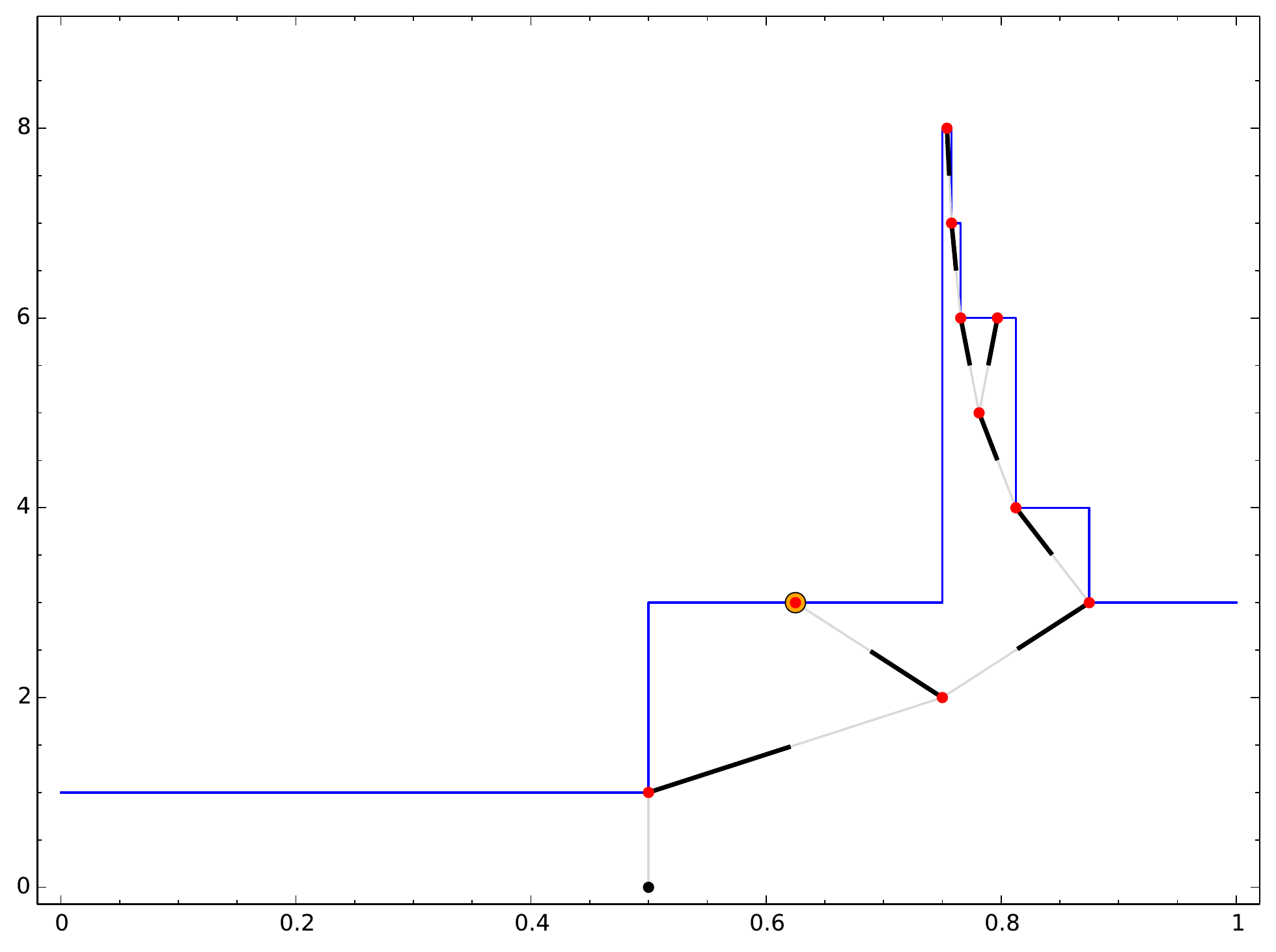}\hfill
\includegraphics[width = 0.24\linewidth]{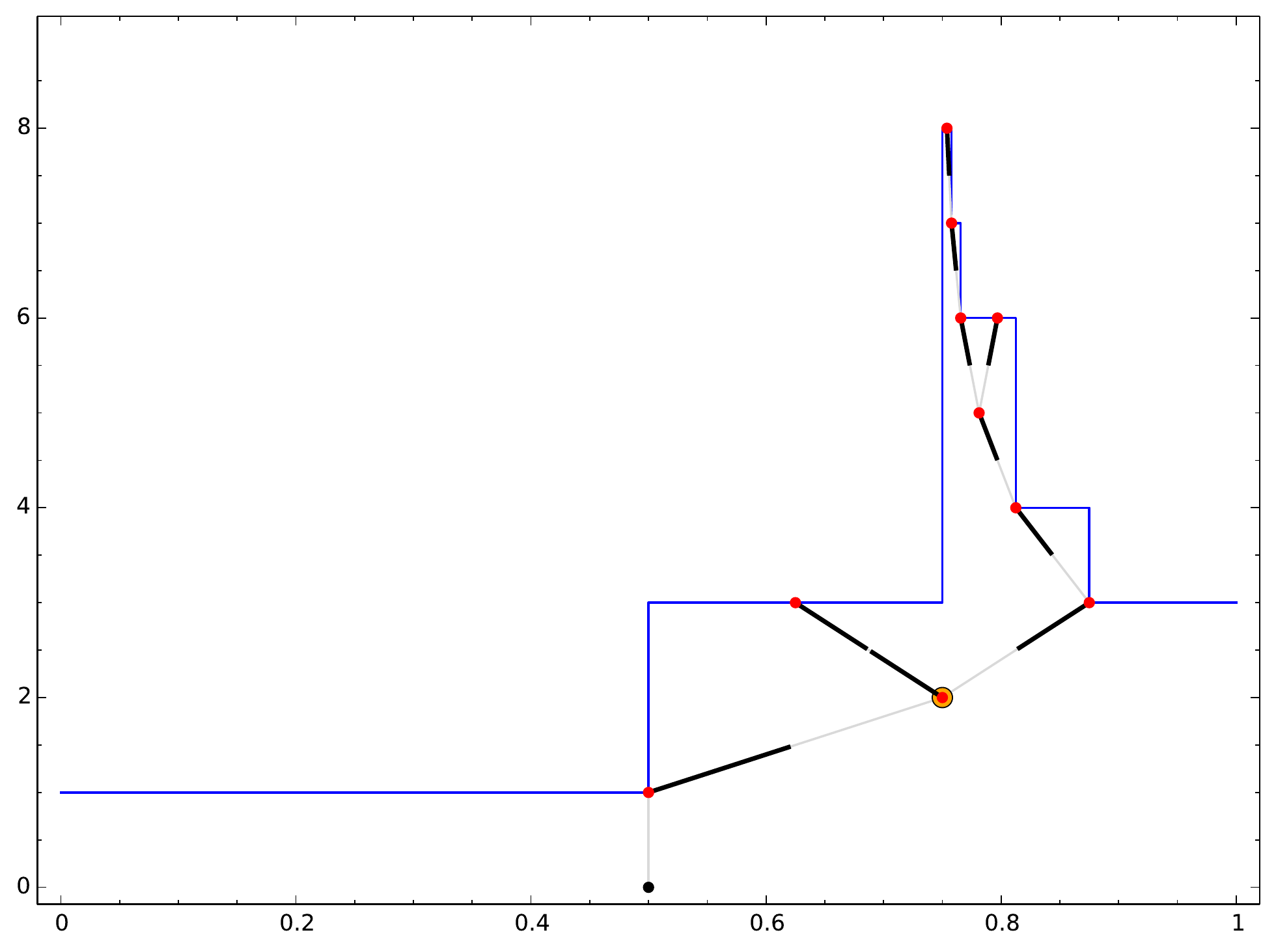}\hfill
\includegraphics[width = 0.24\linewidth]{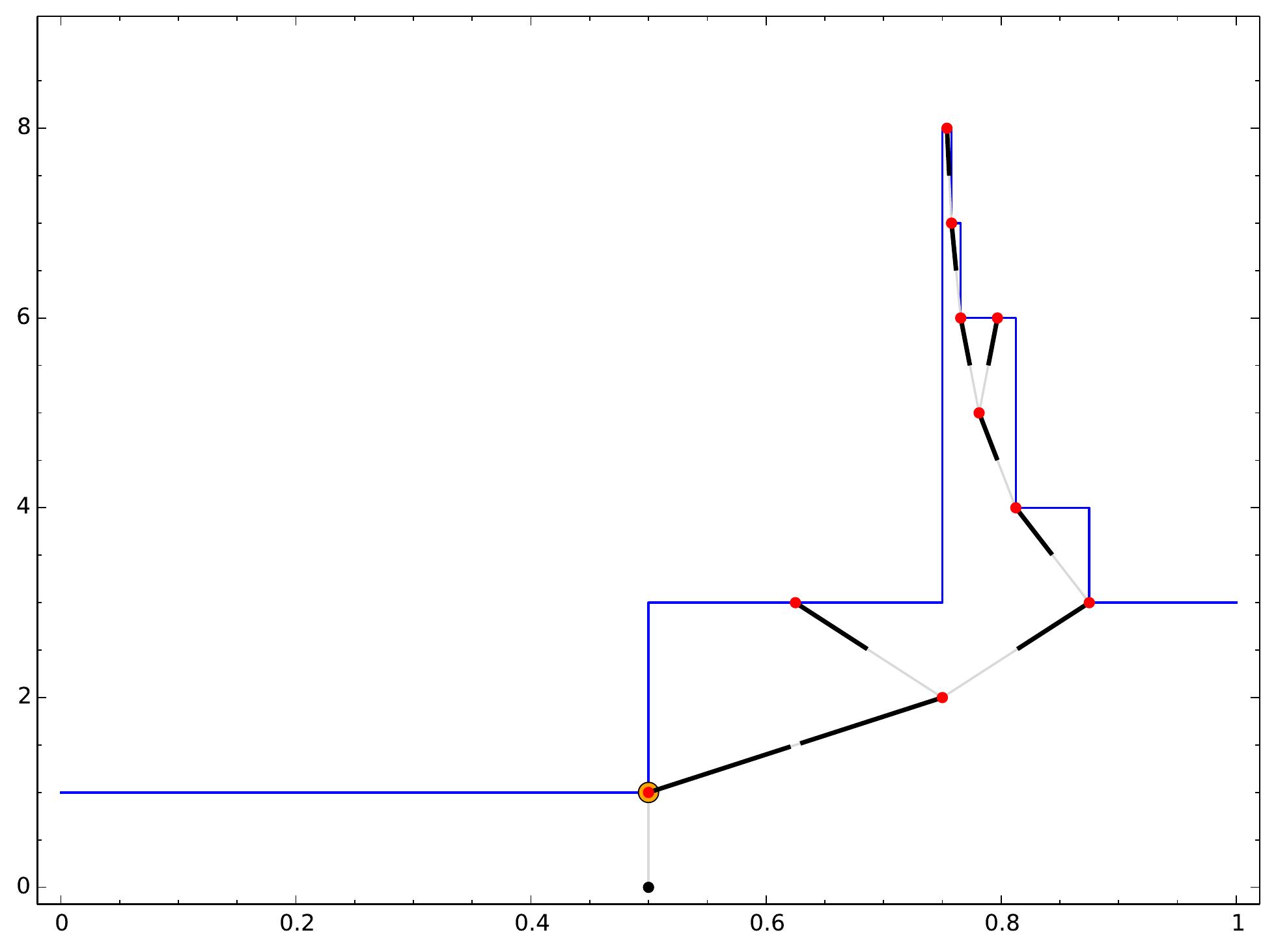}\hfill
\includegraphics[width = 0.24\linewidth]{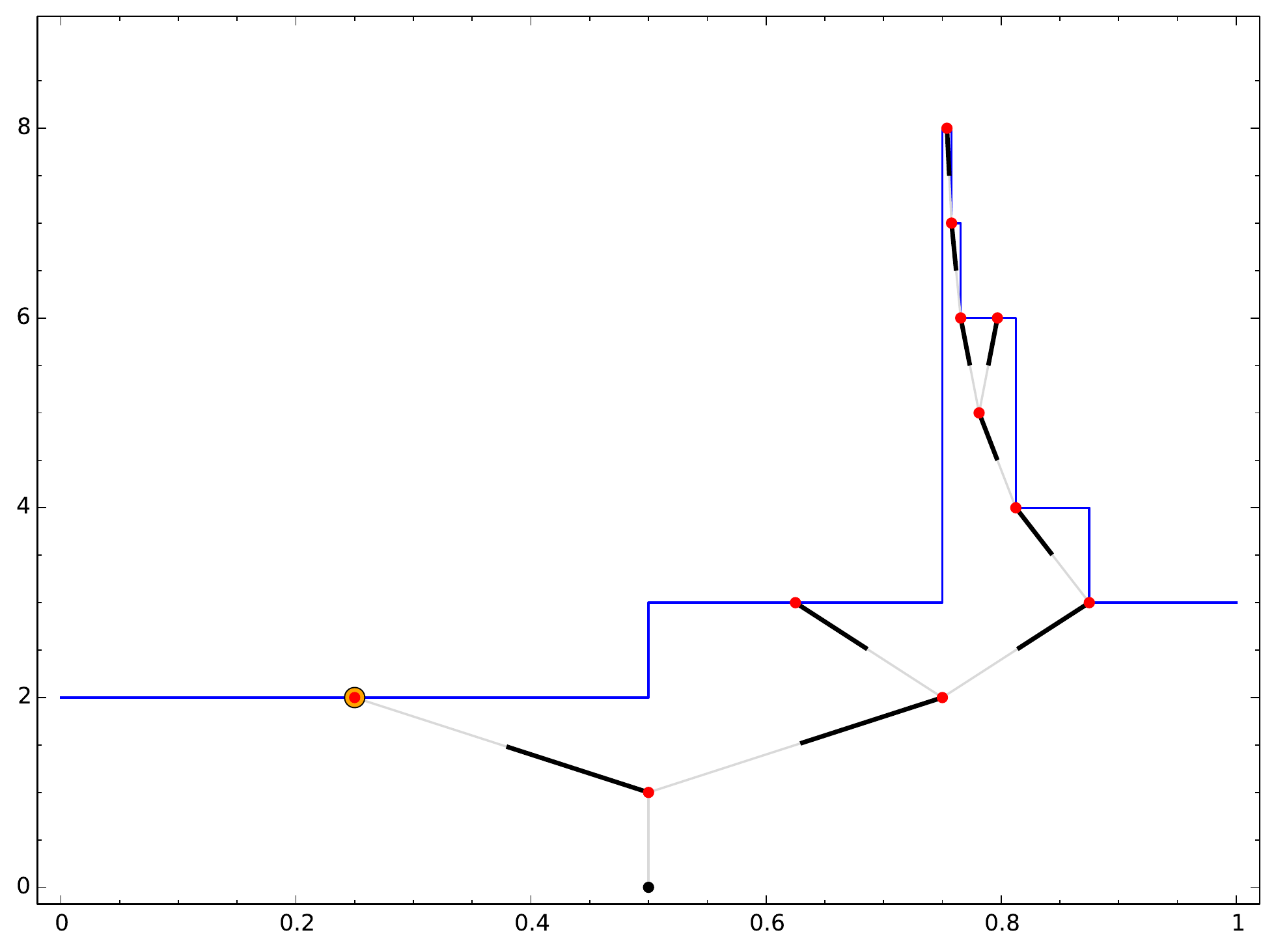}
\caption{\label{fig:contour_first_steps}The first few steps in the computation of the range of rotor walk on $\TT_2$.
The blue functions are the contour functions of the current range $f_{R_n}$. The short black
lines represent the current rotor configuration.}
\end{figure}

\subsection{Contour of the range: recurrent case}

The range $R_n$ of the rotor walk $(X_n)$ on $\TT_{d}$ is a subtree of $\TT_{d}$. In what follows, we write $f_n = f_{R_n}$ for the contour of the range of the rotor walk up to time $n$. Since
we start with a random initial rotor configuration, $f_n$ is a random càdlàg-function. See
Figure \ref{fig:contour_first_steps} for the contours of the rotor walk range for
the first few steps of the process on the binary tree $\TT_2$. Figure \ref{fig:sample_binary_range_10000} shows a typical contour of the rotor range on the
binary tree for $n = 10000$ steps. Figure \ref{fig:expected_binary_range_10000} shows a
numerical approximation of the expectation $\E[f_{10000}]$. 

As in the proofs of the law of large numbers for $|R_n|$, we look first at the times when the rotor walk returns to the root. Recall the definition of the return times $(\tau_k)$, as defined in \eqref{eq:tauk-rec}, for recurrent rotor walks. Write again
$\R_k = R_{\tau_k}$ for the range up to time $\tau_k$ of the $k$-th return of the rotor walk $(X_n)$ to the sink, and denote by $g_k(x) = \E\big[f_{\R_k}(x)\big]$ the expected contour after the $k$-th excursion, that is 
$g_k(x) = \sum_{m=1}^\infty m \Pb\big[f_{\R_k}(x) = m\big]$. Recall that we are in a case of a random initial configuration $\rho$ on $\TT_d$, with $\E[\rho(v)]\geq 1$. The distribution of $\rho$ is $\Pb\big[\rho(v) = i\big] = r_i\geq 0$.
Some additional notation will be needed. For $i = 0,\ldots,d$ let
\begin{equation*}
p_i = \sum_{j=0}^{i-1} r_j, \qquad q_i = \sum_{j = i}^d r_j.
\end{equation*}

For each $x = (x_1,x_2,\ldots) \in [0, 1]$ we can now compute the probability $\Pb\big[f_{\R_1}(x) = m+1\big]$ that the
rotor walk visits the first $m$ vertices of the ray represented by $(x_1,x_2,\ldots)$
before taking a step back towards the sink vertex. Once the rotor walk makes a step towards the sink, it cannot further explore the ray $x$ without first returning to the sink
vertex. Furthermore, the depth the walk can explore the ray before returning depends only on the
initial rotor state along the vertices of the ray. Below, $f_{\R_1}(x) = 1$ means that $x_1$ is not in the range of the walk after the first full excursion, $x_1$ being a vertex at level $1$. We get the following

\begin{align*}
\Pb\big[f_{\R_1}(x) = 1\big] &= \left.\begin{cases}
r_1+r_2+\dots +r_d&\quad\text{if } x_1 = d-1, \\
r_2+\dots +r_d&\quad\text{if } x_1 = d-2, \\
&\quad\vdots\\
r_d&\quad\text{if } x_1 = 0
\end{cases}\right\} = q_{(d-x_1)}.\\[2ex]
\Pb\big[f_{\R_1}(x) = 2\big] &= \left.\begin{cases}
r_0 & \quad \text{if } x_1 = d-1, \\
r_0+r_1 & \quad \text{if } x_1 = d-2, \\
        & \quad \vdots \\
r_0+r_1+\dots+r_{d-1} & \quad \text{if } x_1 = 0
\end{cases}\right\}\\
&\;\;\cdot
\left.\begin{cases}
r_1+r_2+\dots +r_d&\quad\text{if } x_2 = d-1, \\
r_2+\dots +r_d&\quad\text{if } x_2 = d-2, \\
&\quad\vdots\\
r_d&\quad\text{if } x_2 = 0 
\end{cases}\right\} \\
& = p_{(d-x_1)} q_{(d-x_2)}
\end{align*}

\begin{figure}[h]
\begin{subfigure}[t]{0.49\linewidth}
\includegraphics[width = \linewidth]{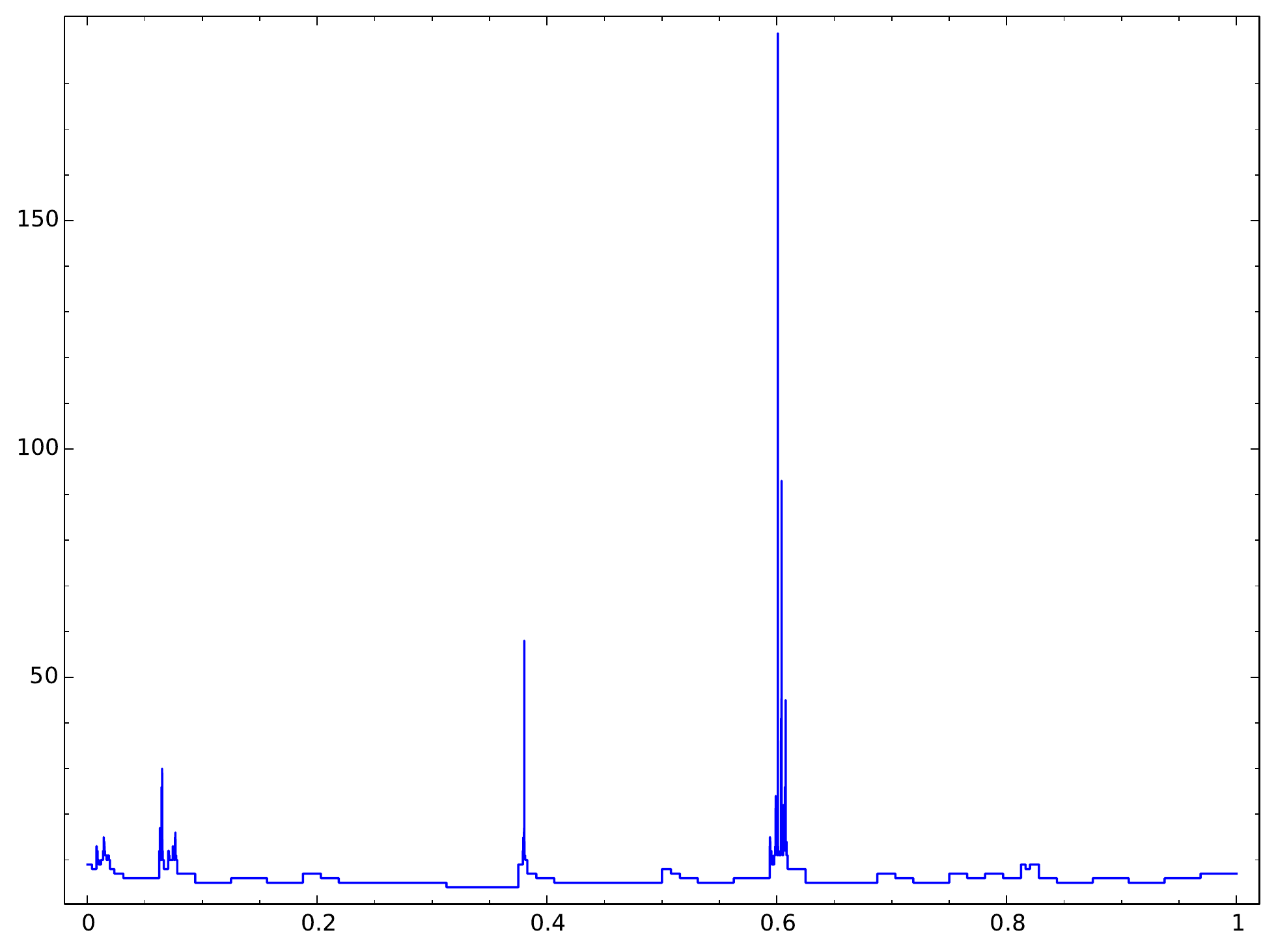}
\caption{\label{fig:sample_binary_range_10000} }
\end{subfigure}
\hfill
\begin{subfigure}[t]{0.49\linewidth}
\includegraphics[width = \linewidth]{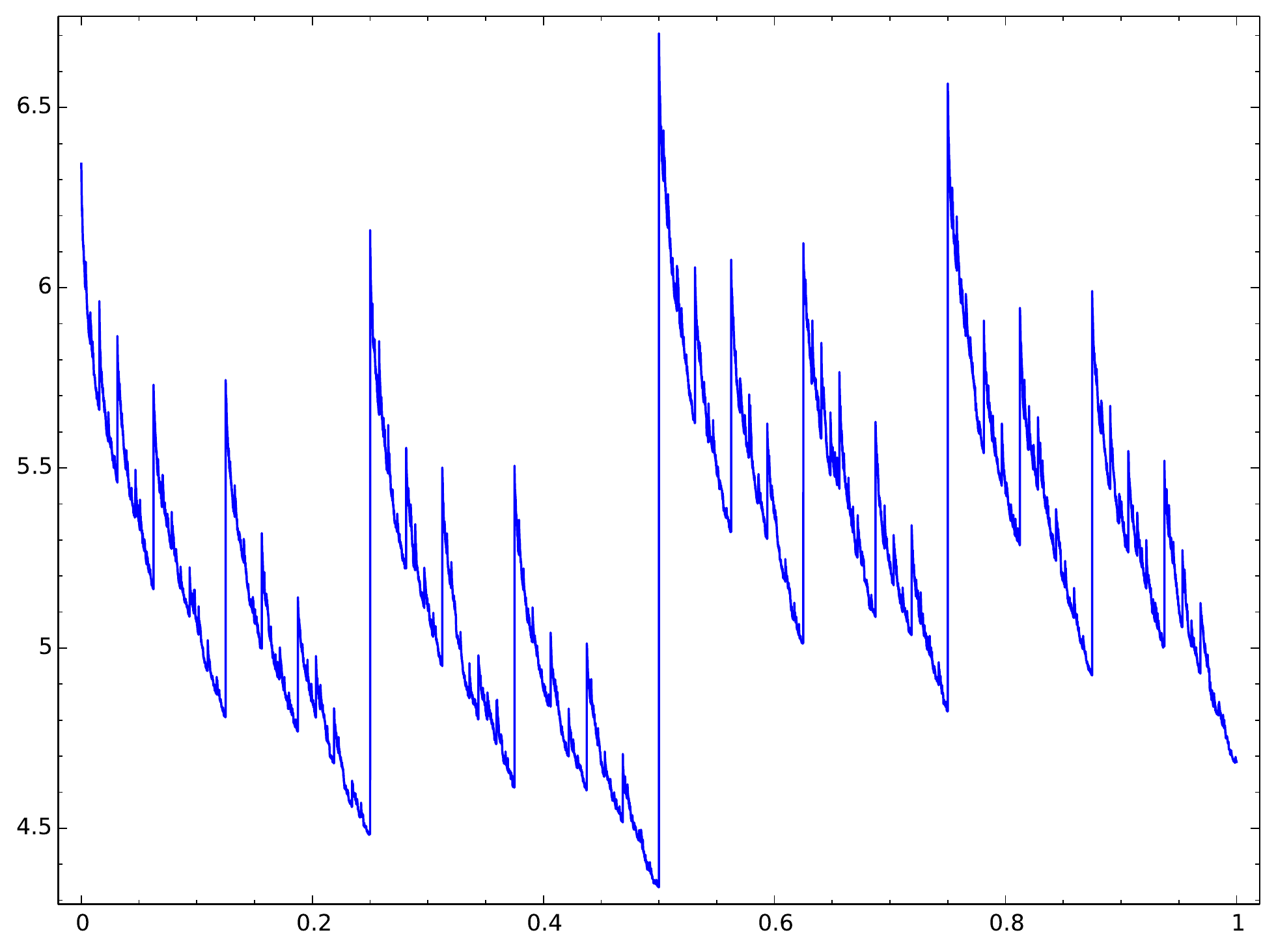}
\caption{\label{fig:expected_binary_range_10000}}
\end{subfigure}
\caption{\label{fig:binary_range_10000}}(a) Typical range contour of the uniform
rotor walk on the binary tree $\TT_2$. The rotor walk performed $10000$ steps.
(b) Average over $1000$ samples of the contour $f_{10000}$ on the binary tree.
\end{figure}

For the general case $m\geq 1$ we get
\begin{equation}\label{eq:fR1}
\Pb\big[f_{\R_1}(x) = m\big] = \left(\prod_{i = 1}^{m-1} p_{(d-x_i)}\right)\cdot q_{(d-x_m)}.
\end{equation}
After completing the $(m-1)$-st excursion all rotors in the visited set $\R_{m-1}$ point
towards the sink $o$. Hence before completing the $m$-th excursion the walk will visit all
boundary points of $\R_{m-1}$. At each boundary point $w\in\partial \R_{m-1}$, the rotor walk makes a full excursion into the subtree rooted at $w$ before exploring the rest of the
boundary and finally returning to $o$ to complete the excursion. Thus the $m$-th excursion 
depends only on the set $\R_{m-1}$ and on the random initial states of the rotors on  $\TT_d \setminus \R_{m-1}$.

For $x = (x_1,x_2,\dots) \in [0,1]$ denote by $\overleftarrow{x}^l = \{d^l x\} = (x_{l+1},x_{l+2},\dots)$, where $\{\bullet\}$ is the fractional part of a positive real number.
For $k\geq 2$ we have $\Pb\big[f_{\R_k}(x) = 1\big]=0$ since
between each return the tree is explored at least for one additional level.
When $k\geq 2$ and $m\geq 2$:
\begin{align}
\label{eq:f_convolution}
\Pb\big[f_{\R_k}(x) = m\big] = \sum_{l=1}^{m-1} \Pb\big[f_{\R_1}(x) = l \big]\cdot\Pb\big[f_{\R_{k-1}}(\overleftarrow{x}^l) = m-l\big].
\end{align}
For $k\geq 2$ we have
\begin{align*}
g_k(x) &= \E\big[f_{\R_k}(x)\big] =  
          \sum_{m=1}^\infty m \Pb\big[f_{\R_k}(x) = m\big]\\
       &= \sum_{m = 1}^\infty m \sum_{l=1}^{m-1}
          \Pb\big[f_{\R_1}(\overleftarrow{x}^l) = m-l\big] \cdot
          \Pb\big[f_{\R_{k-1}}(x) = l\big] \\
       &= g_{k-1}(x) + \sum_{l=1}^\infty g_1(\overleftarrow{x}^l) \Pb\big[f_{\R_{k-1}}(x) = l\big].
\end{align*}

\begin{figure}
\begin{subfigure}[b]{0.49\linewidth}
\includegraphics[width=\linewidth]{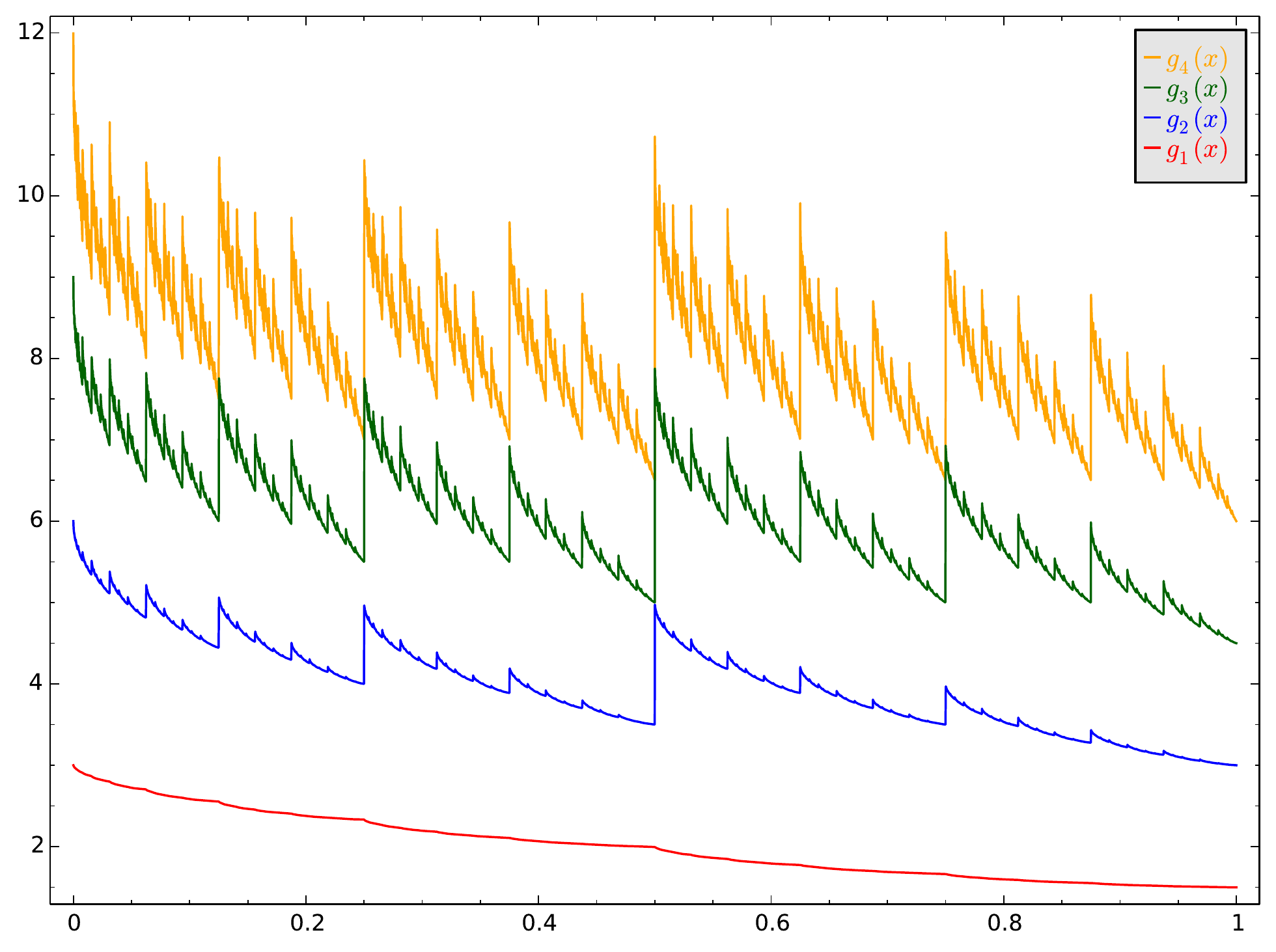}
\caption{\label{fig:g_k_binary_critical}}
\end{subfigure}
\hfill
\begin{subfigure}[b]{0.49\linewidth}
\includegraphics[width=\linewidth]{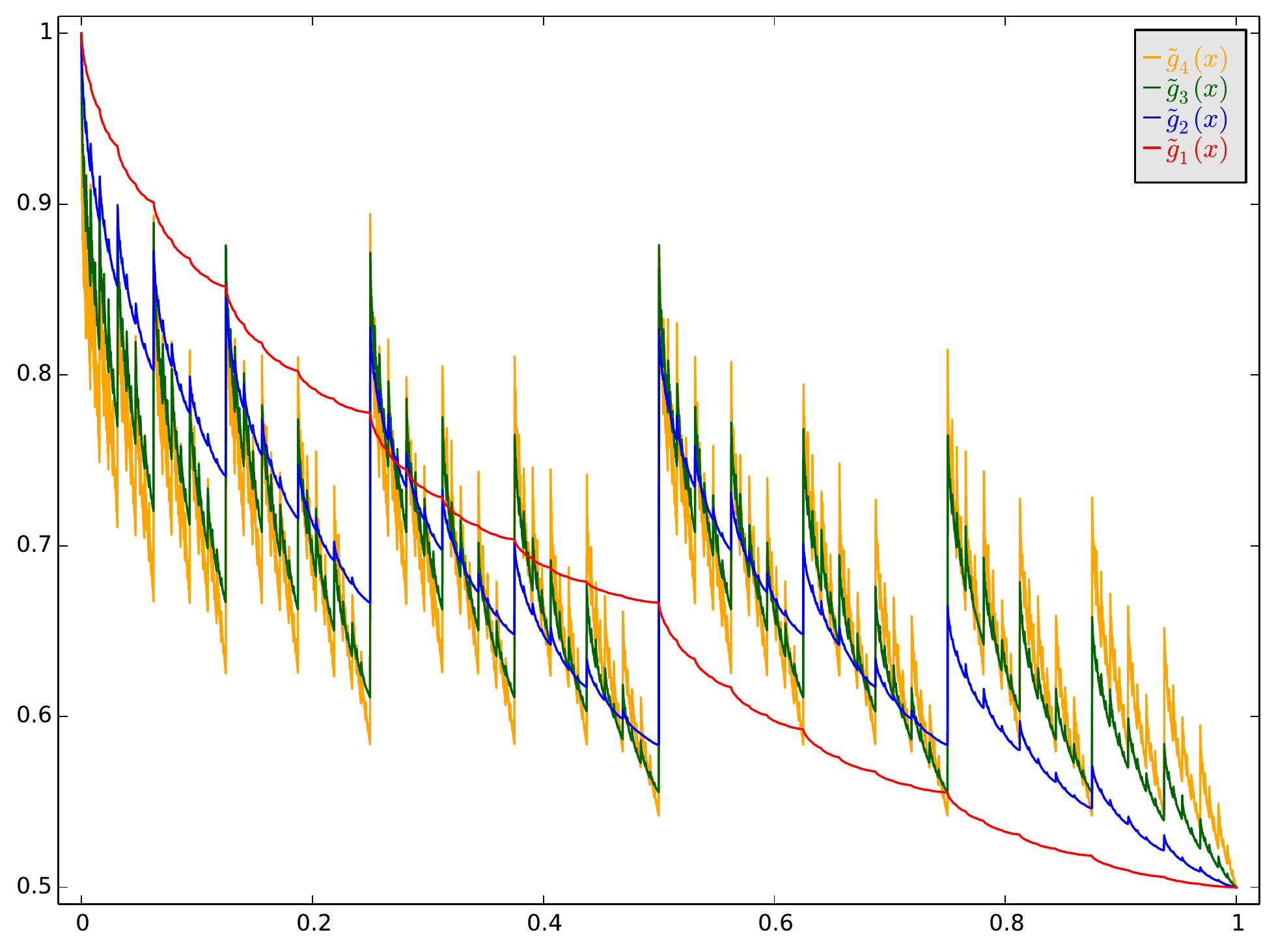}
\caption{\label{fig:g_k_binary_critical_normalized}}
\end{subfigure} \\
\begin{subfigure}[b]{0.49\linewidth}
\includegraphics[width=\linewidth]{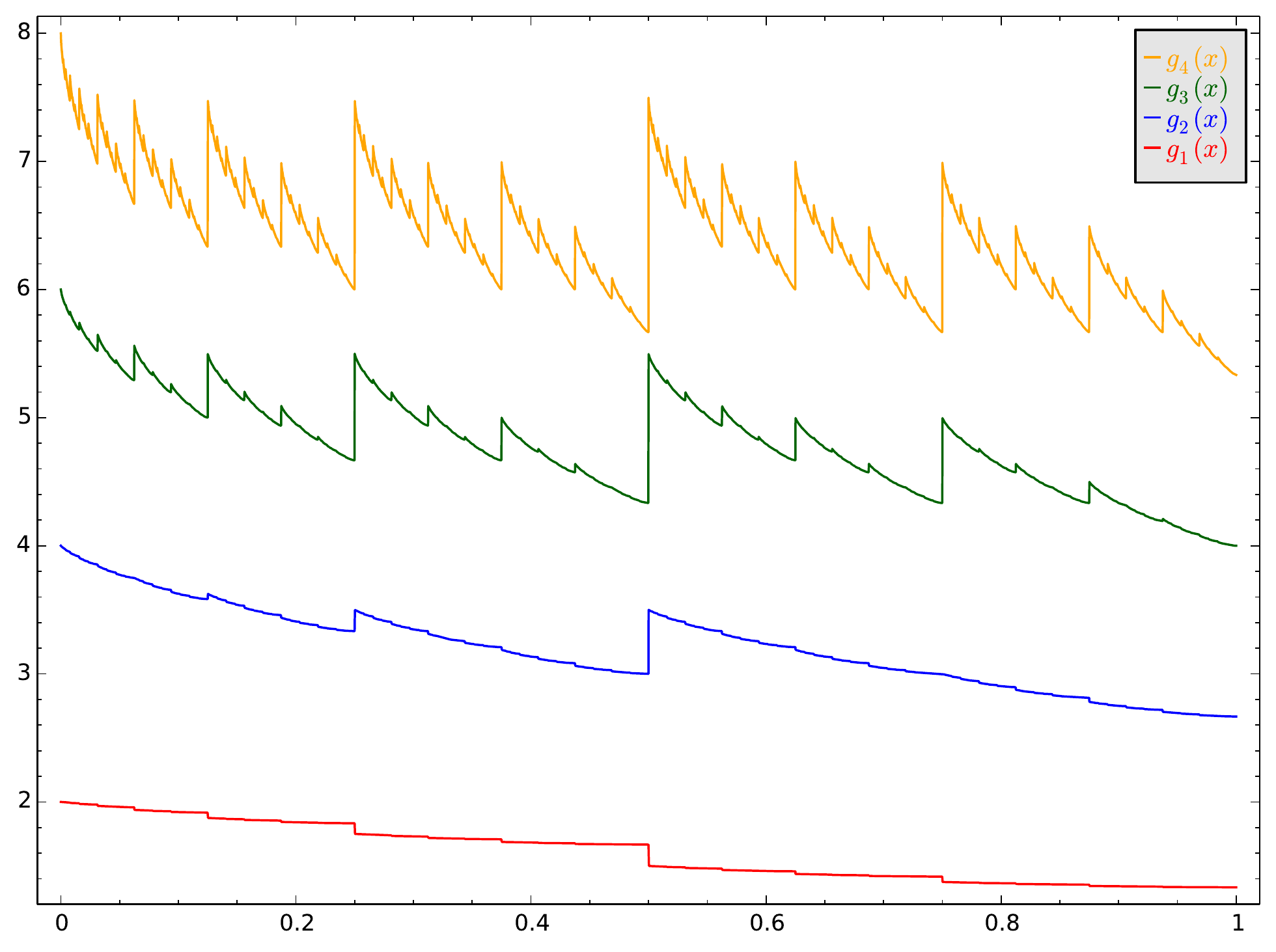}
\caption{\label{fig:g_k_binary_subcritical}}
\end{subfigure}
\hfill
\begin{subfigure}[b]{0.49\linewidth}
\includegraphics[width=\linewidth]{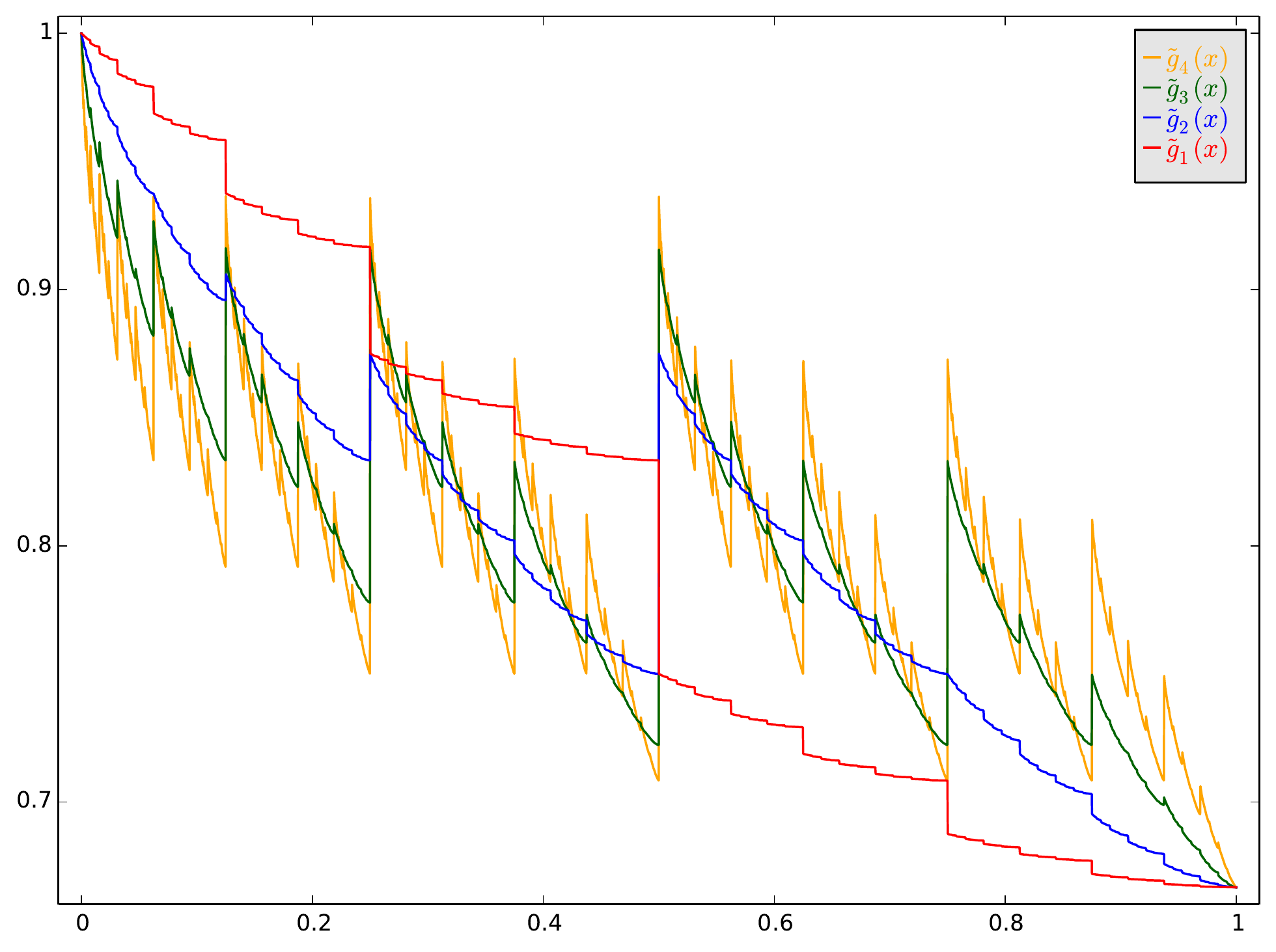}
\caption{\label{fig:g_k_binary_subcritical_normalized}}
\end{subfigure}
\caption{(a) Expected contours of the range of uniform rotor walk on $\TT_2$ up to the completion of the first $4$ excursions. (b) Normalized versions of the functions in (a).
(c) Expected contours $g_k$ for a positive recurrent rotor walk on $\TT_2$ with $r_0 = 1/4$, $r_1 = 1/4$, $r_2 = 1/2$. (d) Scaled versions of the functions in (c).
}
\end{figure}

Figure \ref{fig:g_k_binary_critical} shows the plots of the functions $g_1, \ldots, g_4$ for
the uniform rotor walk on $\TT_2$, which clearly suggests the fractal nature of these
functions.
Note that for $x\in\{0,1\}$ the shifted ray $\overleftarrow{x}^l$ always equals $x$. Hence the number
of steps the rotor walk descends into the left- and rightmost rays in the $k$-th excursion is i.i.d.
In view of the two relations $g_k(0) = k g_1(0)$ and $g_k(1) = k g_1(1)$, it makes sense to
look at the normalized versions $\tilde{g}_k(x) = \frac{g_k}{k}(x)$ (see Figure \ref{fig:g_k_binary_critical_normalized}). Figures \ref{fig:g_k_binary_subcritical} and \ref{fig:g_k_binary_subcritical_normalized} show corresponding plots for a positive recurrent rotor walk on $\TT_2$ with initial distribution given by
$r_0 = 1/4$, $r_1 = 1/4$ and $r_2 = 1/2$.

\begin{theorem}
Set $g_0\equiv 0$. Then for all $x\in[0,1]$ and all $k\geq 1$ we have the self similar
equations:
\begin{equation*}
g_k\left(\frac{x+i}{d}\right) = 1 + \big(1-p_{(d-i)}\big) g_{k-1}(x) + p_{(d-i)}g_k(x),
\end{equation*}
for $i\in\{0,\ldots,d-1\}$.
\end{theorem}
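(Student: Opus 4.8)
The plan is to prove the self-similar equation by decomposing the contour along the ray $y=\frac{x+i}{d}$ according to the behaviour of the rotor walk in the subtree hanging at the first vertex of that ray. Writing $y=(y_1,y_2,\ldots)$ for the base-$d$ digits of $y$, the assumption $i\in\{0,\ldots,d-1\}$ gives $y_1=i$ and $\overleftarrow{y}^1=x$. Denote by $\R_k^{(i)}$ the range $\R_k$ restricted to the subtree rooted at the child $i$ of the root, re-rooted so that $i$ plays the role of the origin. Directly from the definition $f_{\R_k}(y)=\min\{n\geq 1: y_1\cdots y_n\notin\R_k\}$ one obtains the pathwise decomposition
\begin{equation*}
f_{\R_k}(y) = 1 + \indicator_{\{i\in\R_k\}}\, f_{\R_k^{(i)}}(x),
\end{equation*}
since either $i\notin\R_k$ and the ray exits at level $1$, or $i\in\R_k$ and the exit level is one plus the exit level of the shifted ray $x$ from the re-rooted range. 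Taking expectations reduces the claim to computing $\E\big[\indicator_{\{i\in\R_k\}} f_{\R_k^{(i)}}(x)\big]$, and the constant $1$ already matches the additive term in the statement.

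First I would establish the key dynamical bookkeeping: how many complete excursions the subtree at $i$ undergoes during the first $k$ excursions of the walk from the sink. The child $i$ of the root is good (that is, $i\in\T^{\mathsf{good}}_d$, equivalently $\rho(\epsilon)<d-i$) with probability $p_{d-i}$. The point is that after the first excursion the rotor at the root has completed a full turn and again points at the sink $o$; consequently, in every excursion from the second one on the root pushes the walker once into each of its $d$ children, whereas in the very first excursion it pushes only into its good children. Hence the subtree at $i$ is entered in excursions $1,2,\ldots,k$ if $i$ is good, and only in excursions $2,\ldots,k$ if $i$ is not good, so it undergoes exactly $k$ complete sub-excursions in the good case and $k-1$ in the non-good case (and $i\in\R_k$ precisely when this count is positive).

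Next I would invoke self-similarity together with independence. The re-rooted rotor walk in the subtree at $i$ is itself a rotor walk on $\TT_d$ whose rotors are in their fresh i.i.d.\ initial configuration and are independent of $\rho(\epsilon)$, hence of the event $\{i\text{ good}\}$. Therefore, after $t$ complete sub-excursions the re-rooted range $\R_k^{(i)}$ has the same law as $\R_t$, so that conditionally on $\{i\text{ good}\}$ we have $\R_k^{(i)}\dequal\R_k$ and $\E[f_{\R_k^{(i)}}(x)\mid i\text{ good}]=g_k(x)$, while conditionally on $\{i\text{ not good}\}$ we have $\R_k^{(i)}\dequal\R_{k-1}$ with corresponding conditional expectation $g_{k-1}(x)$ (the case $k=1$ being covered by the convention $g_0\equiv 0$, since then $i\notin\R_1$). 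Combining the two cases with their probabilities yields
\begin{equation*}
\E\big[\indicator_{\{i\in\R_k\}} f_{\R_k^{(i)}}(x)\big] = p_{d-i}\, g_k(x) + q_{d-i}\, g_{k-1}(x),
\end{equation*}
and substituting $q_{d-i}=1-p_{d-i}$ gives exactly $g_k\big(\tfrac{x+i}{d}\big)=1+(1-p_{d-i})\,g_{k-1}(x)+p_{d-i}\,g_k(x)$.

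I expect the main obstacle to be the dynamical counting step: the careful justification that the root's rotor returns to its sink-pointing state after each completed excursion, so that non-good children begin to be visited exactly from the second excursion on, together with the renewal/self-similarity identification of the re-rooted range as an independent copy of $\R_k$ or $\R_{k-1}$. As a consistency check, the base case $k=1$ can be verified purely algebraically from \eqref{eq:fR1}, which gives $\Pb[f_{\R_1}(y)=m]=p_{d-i}\,\Pb[f_{\R_1}(x)=m-1]$ for $m\geq 2$ and $q_{d-i}$ for $m=1$; summing against $m$ produces $g_1(y)=1+p_{d-i}\,g_1(x)$, in agreement with the claim.
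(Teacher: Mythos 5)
Your proof is correct, and it takes a genuinely different route from the paper's. The paper argues purely at the level of the distributional formulas it has already derived: from the product formula \eqref{eq:fR1} it gets $\Pb\big[f_{\R_1}\big(\tfrac{x+i}{d}\big)=m\big]=q_{(d-i)}$ for $m=1$ and $p_{(d-i)}\Pb\big[f_{\R_1}(x)=m-1\big]$ for $m\geq 2$, then for $k\geq 2$ substitutes this into the convolution formula \eqref{eq:f_convolution}, shifts indices to obtain
\begin{equation*}
\Pb\Big[f_{\R_k}\Big(\tfrac{x+i}{d}\Big)=m\Big]=q_{(d-i)}\,\Pb\big[f_{\R_{k-1}}(x)=m-1\big]+p_{(d-i)}\,\Pb\big[f_{\R_k}(x)=m-1\big],
\end{equation*}
and sums against $m$. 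You instead work pathwise: the decomposition $f_{\R_k}(y)=1+\indicator_{\{i\in\R_k\}}f_{\R_k^{(i)}}(x)$, the bookkeeping that the root's rotor completes a full turn in every excursion and hence points at the sink from the end of the first excursion on (so the subtree at child $i$ undergoes $k$ complete sub-excursions if $i$ is good and $k-1$ otherwise), and the independence of the subtree's rotors from $\rho(\epsilon)$, which lets you identify the re-rooted range with an independent copy of $\R_k$ or $\R_{k-1}$. Your version makes the probabilistic content transparent — the identity is a one-step renewal at the root — and bypasses the index-shifting in the convolution; the paper's version stays inside formulas it has already established and so needs no additional dynamical justification beyond the paragraph that precedes \eqref{eq:f_convolution}. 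The points you flag as delicate (the rotor at the root returning to its sink-pointing state after each completed excursion, recurrence guaranteeing that every sub-excursion terminates, and the self-similar identification of the re-rooted range) are all valid in the recurrent setting of this section, and are in substance the same facts the paper uses to justify its convolution formula; your base-case check against \eqref{eq:fR1} coincides with the paper's $k=1$ computation.
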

\begin{proof}
For $x\in[0,1]$ we identify $x$ with its $d$-ary expansion $x=(x_1,x_2,x_3,\ldots)$, where $x = \sum_{i=1}^\infty x_i\cdot d^{-i}$ with $x_i \in \{0,\ldots,d-1\}$.
For $i\in\{0,\ldots,d-1\}$ and $x=(x_1,x_2,x_3,\ldots)$ we have the correspondence
\begin{equation*}
\frac{x+i}{d} = (i,x_1,x_2,x_3,\ldots)
\end{equation*}
and thus $\overleftarrow{\left(\frac{x+i}{d}\right)}^l = \overleftarrow{x}^{l-1}$ for all $l\geq 1$.
It follows that $\Pb\big[f_{\R_1}\left(\frac{x+i}{d}\right) = 1\big] = q_{(d-i)}$ and for all $m\geq 2$, as a consequence of \eqref{eq:fR1}
\begin{align*}
\Pb\left[f_{\R_1}\left(\frac{x+i}{d}\right) = m\right] &= p_{(d-i)}\left( \prod_{i=2}^{m-1}p_{(d-x_i)}\right)q_{(d-x_m)} \\
&= p_{(d-i)} \Pb\big[f_{\R_1}(x)=m-1\big].
\end{align*}
We first look at the case $k=1$
\begin{align*}
g_1\left(\frac{x+i}{d}\right) &= \Pb\left[f_{\R_1}\left(\frac{x+i}{d}\right) = 1\right] + \sum_{m\geq 2} m \Pb\left[f_{\R_1}\left(\frac{x+i}{d}\right) = m\right] \\
&=q_{(d-i)} + \sum_{m\geq 2} m p_{(d-i)}\Pb\big[f_{\R_1}(x)=m-1\big] 
\end{align*}
which in turn equals 
\begin{align*}
&=q_{(d-i)} +p_{(d-i)}\sum_{m\geq 1} (m+1)\Pb\big[f_{\R_1}(x)=m\big] \\
&=q_{(d-i)} +p_{(d-i)}\sum_{m\geq 1} m\Pb\big[f_{\R_1}(x)=m\big] + p_{(d-i)}\sum_{m\geq 1} \Pb\big[f_{\R_1}(x)=m\big] \\
&= q_{(d-i)} +p_{(d-i)} + p_{(d-i)}g_1(x)=1 + p_{(d-i)}g_1(x).
\end{align*}
Since $g_0(x) = 0$ by definition the case $k=1$ follows.
We now look at the case $k\geq 2$. By the convolution formula \eqref{eq:f_convolution} we have
\begin{align*}
\Pb\left[f_{\R_k}\left(\frac{x+i}{d}\right) = m\right] & = \sum_{l=1}^{m-1} \Pb\left[f_{\R_1}\left(\frac{x+i}{d}\right) = l \right]\cdot\Pb\left[f_{\R_{k-1}}\left(\overleftarrow{\left(\frac{x+i}{d}\right)}^l\right) = m-l\right] \\
&=\Pb\left[f_{\R_1}\left(\frac{x+i}{d}\right) = 1 \right]\cdot\Pb\left[f_{\R_{k-1}}\left(\overleftarrow{\left(\frac{x+i}{d}\right)}^1\right) = m-1\right]\\
&\phantom{=}+\sum_{l=2}^{m-1} \Pb\left[f_{\R_1}\left(\frac{x+i}{d}\right) = l \right]\cdot\Pb\left[f_{\R_{k-1}}\big(\overleftarrow{x}^{l-1}\big) = m-l\right] 
\end{align*}
which equals 
\begin{align*}
&=q_{(d-i)}\cdot\Pb\left[f_{\R_{k-1}}(x) = m-1\right] 
+p_{(d-i)}\sum_{l=2}^{m-1} \Pb\left[f_{\R_1}(x) = l-1 \right]\cdot\Pb\left[f_{\R_{k-1}}\big(\overleftarrow{x}^{l-1}\big) = m-l\right]\\
&=q_{(d-i)}\cdot\Pb\left[f_{\R_{k-1}}(x) = m-1\right] 
+p_{(d-i)}\sum_{l=1}^{m-2} \Pb\left[f_{\R_1}(x) = l \right]\cdot\Pb\left[f_{\R_{k-1}}\big(\overleftarrow{x}^l\big) = m-1-l\right] \\
&=q_{(d-i)}\cdot\Pb\left[f_{\R_{k-1}}(x) = m-1\right] +p_{(d-i)}
\Pb\left[f_{\R_k}(x) = m-1\right],
\end{align*}
where in the last step we use the convolution formula \eqref{eq:f_convolution} again
in reverse.
Thus,
\begin{align*}
g_k\left(\frac{x+i}{d}\right)  &=  \sum_{m=2}^\infty m \Pb\left[f_{\R_k}\left(\frac{x+i}{d}\right) = m\right] \\
&=q_{(d-i)} \sum_{m=2}^\infty m \Pb\left[f_{\R_{k-1}}(x) = m-1\right] +
p_{(d-i)}\sum_{m=2}^\infty m
\Pb\left[f_{\R_k}(x) = m-1\right] \\
&=q_{(d-i)} \sum_{m=1}^\infty (m+1) \Pb\left[f_{\R_{k-1}}(x) = m\right] +
p_{(d-i)}\sum_{m=1}^\infty (m+1)
\Pb\left[f_{\R_k}(x) = m\right] \\
&= q_{(d-i)} \big(g_{k-1}(x) + 1\big) + p_{(d-i)}\big(g_k(x) + 1\big) \\
&= 1 + (1-p_{(d-i)}) g_{k-1}(x) + p_{(d-i)}g_k(x),
\end{align*}
which proves the theorem in the general case.
\end{proof}
\end{appendices}

\paragraph{Some comments.}

It may be interesting to understand on which graphs, other than regular and Galton-Watson trees, does the Einstein relation \eqref{eq:einstein} hold. Other classes of trees such as periodic trees are definitely a good candidate. 

One can also look at finer estimates such as law of iterated logarithm, or central limit theorems for the range and the speed for rotor walks on regular trees.

\paragraph*{Acknowledgements}
The question of investigating the range of rotor walks on trees comes from Lionel Levine, whom we thank for inspiring conversations. We  are very grateful to the anonymous referee for a very careful reading of the manuscript, for finding a gap in one of our proofs and for suggesting an alternative proof to Theorem 1.1(i).

\bibliography{range}{}
\bibliographystyle{alpha_arxiv}

\textsc{Wilfried Huss, ADB Safegate Austria}
\texttt{husswilfried@gmail.com};

\textsc{Ecaterina Sava-Huss, Institute of Discrete Mathematics, Graz University of Technology, Austria.}
\texttt{sava-huss@tugraz.at};\\ \url{http://www.math.tugraz.at/~sava} 

\end{document}